\theoremstyle{plain}
\newtheorem{thm}{Theorem}[section] 
\newtheorem{prop}[thm]{Proposition}
\newtheorem{cor}[thm]{Corollary}
\newtheorem{lem}[thm]{Lemma}
\newtheorem{claim}{Claim}
\theoremstyle{definition}
\newtheorem{defn}[thm]{Definition}
\newtheorem{PC}{\textit{Proof of Claim}}
\newtheorem{rmk}{Remark}
\DeclareMathOperator*{\order}{list}
\newcommand{\RN}[1]{%
  \textup{\uppercase\expandafter{\romannumeral#1}}%
}
\newcommand{\Addresses}{{
  \bigskip
  \footnotesize

 \textsc{Department of Mathematical Sciences, Durham University,
    School Road, Durham, UK,  DH1 3LE}\par\nopagebreak
  \textit{E-mail address}: \texttt{j.m.wilson2@durham.ac.uk} }}
\title{Shellability and Sphericity of the quasi-arc complex of the M\"obius strip.}
\author{Jon Wilson}
\date{}
\begin{document}

\maketitle

\thispagestyle{firstpage}

\begin{abstract}

Shellability of a simplicial complex has many useful structural implications. In particular, it was shown in \cite{shelling} by Danaraj and Klee that every shellable pseudo-manifold is a PL-sphere. The purpose of this paper is to prove the shellability of the quasi-arc complex of the M\"obius strip. Along the way we provide elementary proofs of the shellability of the arc complex of the $n$-gon and the cylinder. In turn, applying the result of Danaraj and Klee, we obtain the sphericity of all of these complexes.
\end{abstract}

\pagenumbering{arabic}

\tableofcontents

\section{Introduction}

The arc complex $Arc(S)$ of a marked orientable surface $S$ was introduced and studied by Harer \cite{harer} whilst investigating the homology of mapping class groups of orientable surfaces. In \cite{fst},\cite{length}, Fomin, Shapiro and Thurston found there is a strong relation between cluster algebras and these arc complexes. They showed that $Arc(S)$ is a subcomplex of the cluster complex $\Delta(S)$ associated to the cluster algebra arising from $S$. Moreover, it was shown by Fomin and Zelevinsky, almost at the birth of cluster algebras, that the cluster complex of a cluster algebra has a polytopal realisation when the complex is finite, see \cite{fz-finite}. Since $Arc(S)$ and $\Delta(S)$ coincide when $S$ is an unpunctured surface, as a specific case, the well known fact that $Arc(n\textendash gon)$ is polytopal was rediscovered. Namely, it is dual to the associahedron.

In \cite{quasi} Dupont and Palesi consider the quasi-arc complex of unpunctured non-orientable surfaces. Imitating the approach in \cite{length} they describe how the `lengths' of quasi-arcs are related. In doing so they discover what the analogue of a cluster algebra arising from non-orientable surfaces should be. A natural question is to ask what kind of structure the quasi-arc complex has in this setting. Here, in some sense, the marked M\"{o}bius strip $\textnormal{M}_n$ plays the role of the $n\textendash gon$ - being the only non-orientable surface yielding a finite quasi-arc complex. 

For $n \in \{1,2,3\}$ it is easy to check that the quasi-arc complex $Arc(\textnormal{M}_n)$ of the M\"obius strip is a polytope, see Figure \ref{fig:polytopes}. However, in general, due to the absence of a root system it is difficult to find a polytopal realisation.

It is shown in \cite{mani} that shellability of a simplicial complex is a necessary condition for it being polytopal. This paper is concerned with proving the following theorem. \newline

\noindent \textbf{Main Theorem.} (Theorem \ref{Main}). \textit{$Arc(\textnormal{M}_n)$ is shellable for $n \geq 1$}. \newline

Shellability has its roots in polytopal theory where it turned out to be the missing piece of the puzzle for obtaining the Euler-Poincare formula. It has subsequently become a well established idea in combinatorial topology and geometry having some useful implications. For instance, Danaraj and Klee showed in \cite{shelling} that every shellable pseudo-manifold is a PL-sphere. As a consequence, we obtain the following result. \newline

\noindent \textbf{Corollary.} (Corollary \ref{sphere}). \textit{$Arc(\textnormal{M}_n)$ is a PL $(n-1)$-sphere for $n \geq 1$}. \newline

The paper is organised as follows. In Section \ref{quasi-cluster} we recall the work of Dupont and Palesi in \cite{quasi}. Here we define the quasi-arc complex of a non-orientable surface and discuss why it is a pseudo-manifold, and when it is finite.

In Section \ref{shellability} we firstly define shellability and recall some fundamental results. Next we restrict our attention to the $n\textendash gon$ and to $C_{n,0}$ - the cylinder with $n$ marked points on one boundary component, and none on the other. In the interest of introducing key ideas of the paper early on, we present a short proof that both $Arc(n\textendash gon)$ and $Arc(C_{n,0})$ are shellable. As a consequence, applying the result of Danaraj and Klee, we rediscover the classical fact of Harer \cite{harer2} that $Arc(n\textendash gon)$ and $Arc(C_{n,0})$ are PL-spheres.

Section \ref{main section} is dedicated to proving the shellability of $Arc(\textnormal{M}_n)$ and occupies the bulk of the paper.

\section*{\large \centering Acknowledgements}
I would like to thank Carsten Lange for proposing shellability as an interesting property to study, and additionally thank him and Frederic Palesi for helpful discussions. I especially thank my supervisor Anna Felikson for her continued support and encouragement whilst writing this paper. Finally, I thank Michael Wilson for suggesting a neat application for drawing pictures.

\section{Quasi-cluster algebras}
\label{quasi-cluster}

This section recalls the work of Dupont and Palesi in \cite{quasi}. \newline

Let $S$ be a compact $2$-dimensional manifold with boundary $\partial S$. Fix a set $M$ of marked points in $\partial S$. The tuple $(S,M)$ is called a bordered surface. We wish to exclude the cases where $(S,M)$ does not admit any triangulation. As such, we do not allow $(S,M)$ to be a monogon, digon or triangle.

\begin{defn}

An \textit{\textbf{arc}} is a simple curve in $(S,M)$ connecting two (not necessarily distinct) marked points.

\end{defn}

\begin{defn}

A closed curve in $S$ is said to be \textit{\textbf{two-sided}} if it admits a regular neighbourhood which is orientable. Otherwise, it is said to be \textit{\textbf{one-sided}}.

\end{defn}

\begin{defn}

A \textit{\textbf{quasi-arc}} is either an arc or a simple one-sided closed curve in the interior of $S$. Let $A^{\otimes}(S,M)$ denote the set of quasi-arcs in $(S,M)$ considered up to isotopy.

\end{defn}

It is well known that a closed non-orientable surface is homeomorphic to the connected sum of $k$ projective planes $\mathbb{R}P^2$. Such a surface is said to have (non-orientable) genus $k$. Recall that the projective plane is homeomorphic to a hemisphere where antipodal points on the boundary are identified. A \textit{\textbf{cross-cap}} is a cylinder where antipodal points on one of the boundary components are identified. We represent a cross cap as shown in Figure \ref{fig:crosscap}.

Hence, a closed non-orientable surface of genus $k$ is homeomorphic to a sphere where $k$ open disks are removed, and have been replaced with cross-caps. More generally, a compact non-orientable surface of genus $k$, with boundary, is homeomorphic to a sphere where more than $k$ open disks are removed, and $k$ of those open disks have been replaced with cross-caps.

\begin{figure}[H]
\centering
\includegraphics[width=30mm]{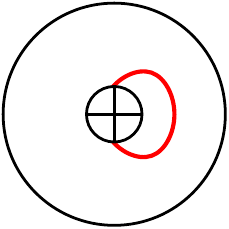}
\caption{A picture of a crosscap together with a one-sided closed curve.}
\label{fig:crosscap}
\end{figure}

\begin{defn}

Two elements in $A^{\otimes}(S,M)$ are called \textit{\textbf{compatible}} if there exists representatives in their respective isotopy classes that do not intersect in the interior of $S$.

\end{defn}

\begin{defn}

A \textit{\textbf{quasi-triangulation}} of $(S,M)$ is a maximal collection of mutually compatible arcs in $A^{\otimes}(S,M)$. A quasi-triangulation is called a \textit{\textbf{triangulation}} if it consists only of arcs, i.e there are no one-sided closed curves.

\end{defn}

\begin{prop}[\cite{quasi}, Prop. 2.4.]
\label{flip}
Let $T$ be a quasi-triangulation of $(S,M)$. Then for any $\gamma \in T$ there exists a unique $\gamma' \in A^{\otimes}(S,M)$ such that $\gamma \neq \gamma'$ and $\mu_{\gamma}(T) := T\setminus \{\gamma\} \cup \{\gamma'\}$ is a quasi-triangulation of $(S,M)$.

\end{prop}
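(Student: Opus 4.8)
The plan is to make the whole question local around $\gamma$. The first step is to recall the combinatorics of a quasi-triangulation: the arcs of $T$, together with the boundary segments, cut $(S,M)$ into triangles --- possibly with two sides identified, since the marked points all lie on $\partial S$ and so no interior puncture can be enclosed --- and \emph{cross-cap pieces}, each a copy of the M\"obius strip carrying a single marked point, bounded by one loop-arc of $T$ and containing exactly one one-sided closed curve of $T$. Hence $\gamma$ is incident to one or two of these regions, and deleting $\gamma$ from $T$ fuses them into a single complementary region $P = P(\gamma)$ of $T\setminus\{\gamma\}$ (if $\gamma$ is the one-sided curve of a cross-cap piece, deleting it merely empties that piece). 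Any quasi-arc $\delta\neq\gamma$ for which $T\setminus\{\gamma\}\cup\{\delta\}$ is a quasi-triangulation must be compatible with every element of $T\setminus\{\gamma\}$, hence is isotopic into $P$ with endpoints at the marked points of $\partial P$; so everything reduces to analysing the quasi-arcs supported in $P$, where $P$ ranges over only finitely many local models --- each a small surface with a bounded number of boundary marked points and carrying at most one cross-cap.

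Existence: I would exhibit $\gamma'$ model by model. When $P$ is a (generalised) quadrilateral, $\gamma$ is one of its two diagonals and $\gamma'$ is the other. When $P$ is a M\"obius strip with a single marked point, it carries exactly two quasi-arcs --- an essential arc and the core one-sided curve --- which are mutually incompatible, so $\gamma'$ is whichever of the two is not $\gamma$. The remaining models (a cross-cap piece fused to a triangle, or a M\"obius strip with two marked points) are dealt with by sliding the cross-cap to a neighbouring corner, or by invoking the classical flip for triangulated surfaces (see \cite{fst}) on the orientable part and re-attaching the cross-cap. In each case a direct check confirms that $T\setminus\{\gamma\}\cup\{\gamma'\}$ is again a maximal compatible family, i.e.\ a quasi-triangulation.

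Uniqueness is the assertion that each model $P$ supports exactly two quasi-arcs, one of them being $\gamma$. For the polygonal models this is just the fact that a generalised quadrilateral has two diagonals. For the M\"obius-strip models it is a genuinely non-orientable computation, and this is where I expect the real difficulty: one must enumerate the simple arcs and simple closed curves in a M\"obius strip with one or two boundary marked points, in particular ruling out the \emph{a priori} infinite family of arcs that wind around the core by using that the core traversed twice is isotopic to the boundary, together with the uniqueness up to isotopy of the essential one-sided curve. A final point needing care is that $P$ need not be embedded in $S$; one should first pass to the surface obtained by cutting $S$ along the arcs of $T\setminus\{\gamma\}$, inside which $P$ becomes an honest surface with marked boundary, so that the finite check is not confounded by spurious self-identifications.
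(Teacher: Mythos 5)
The paper you are reviewing does not prove this proposition; it is imported verbatim from Dupont and Palesi \cite{quasi} (their Proposition 2.4) and used as a black box, so there is no in-paper proof to compare against. Judged on its own terms, your reduction to the local complementary region $P(\gamma)$ is the right framework and is indeed how one argues, but the list of local models you give and the verifications you defer carry the whole weight of the argument, and they contain genuine gaps.

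Most concretely: a M\"obius strip with two boundary marked points cannot occur as $P(\gamma)$, because that surface requires two quasi-arcs to be quasi-triangulated, whereas $P(\gamma)$ --- obtained by deleting a single arc from a quasi-triangulation --- must need exactly one. The case you call ``a cross-cap piece fused to a triangle'' is the one where $\gamma$ is the loop bounding the cross-cap piece; there the one-sided curve $c$ inside that piece \emph{remains} an element of $T\setminus\{\gamma\}$, so $P(\gamma)$ is the M\"obius strip cut along $c$, i.e.\ an annulus, not a M\"obius strip. That observation is precisely what disposes of the ``a priori infinite family'' you flag: every essential arc passing through the cross-cap meets $c$, so the only candidates disjoint from $T\setminus\{\gamma\}$ are the two cross-cap-bounding loops at the vertices of the adjacent triangle, one of which is $\gamma$ itself. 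You neither make this observation nor carry out the enumeration. Finally, ``sliding the cross-cap to a neighbouring corner'' is not an operation on a fixed quasi-triangulation $T$ --- it changes other arcs of $T$ --- so it cannot be invoked mid-argument to reduce one local model to another. The plan is sound in outline, but the non-orientable local verifications, which are the actual content of the proposition, are left unexecuted.
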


\begin{defn}
$\mu_{\gamma}(T)$ is called the \textit{\textbf{quasi-mutation}} of $T$ in the direction $\gamma$, and $\gamma'$ is called the \textit{\textbf{flip}} of $\gamma$ with respect to $T$.

\end{defn}

The flip graph of a bordered surface $(S,M)$ is the graph with vertices corresponding to (quasi) triangulations and edges corresponding to flips.  It is well known that the flip graph of triangulations of $(S,M)$ is connected. Moreover, it can be seen that every one-sided closed curve, in a quasi-triangulation $T$, is bounded by an arc enclosing a M\"obius strip with one marked point on the boundary. Therefore, if we perform a quasi-flip at each one-sided closed curve in $T$ we arrive at a triangulation. As such, we get the following proposition.

\begin{prop}[\cite{quasi}, Prop. 2.12.]
\label{flip connected}
The flip graph of quasi-triangulations of $(S,M)$ is connected.

\end{prop}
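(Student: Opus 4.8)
The plan is to reduce the statement to the already-known connectedness of the flip graph of ordinary triangulations. First I would make precise the observation quoted in the preceding paragraph: if $T$ is a quasi-triangulation containing a one-sided closed curve $\gamma$, then $\gamma$ must be contained in a M\"obius strip cut out of $(S,M)$ by a single arc $\alpha \in T$ carrying exactly one marked point on its boundary. This is a local fact about the M\"obius strip $\textnormal{M}_1$: its only quasi-triangulations are $\{\alpha, \gamma\}$ (the one-sided curve together with the enclosing arc) and $\{\alpha, \beta\}$ for the two ordinary arcs $\beta$ one can draw inside, so performing the quasi-mutation $\mu_\gamma$ replaces $\gamma$ by such a $\beta$ while fixing every other arc of $T$. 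I would verify that each such $\mu_\gamma$ is indeed a flip in the sense of Proposition 2.4, hence an edge of the flip graph.

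Next, given an arbitrary quasi-triangulation $T$, let $k = k(T)$ be the number of one-sided closed curves it contains. If $k > 0$, pick one such curve $\gamma$; then $\mu_\gamma(T)$ is a quasi-triangulation with $k(\mu_\gamma(T)) = k - 1$, since the flip replaces $\gamma$ by an arc and creates no new one-sided curves (the arc $\alpha$ bounding the M\"obius neighbourhood is untouched, so no new one-sided curve can appear). Iterating, after exactly $k$ flips we reach a quasi-triangulation $T'$ with $k(T') = 0$, i.e. an honest triangulation, and $T$ is joined to $T'$ by a path of length $k$ in the flip graph.

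Finally, given two quasi-triangulations $T_1, T_2$, apply the above to connect each $T_i$ by a path to a triangulation $T_i'$. By the well-known connectedness of the flip graph of triangulations of $(S,M)$ (for which the mutations used there are all flips in the sense of Proposition 2.4, since triangulations are in particular quasi-triangulations), there is a path from $T_1'$ to $T_2'$. Concatenating the three paths gives a path from $T_1$ to $T_2$, proving connectedness.

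The only genuinely delicate point is the local claim that every one-sided closed curve in a quasi-triangulation is enclosed by an arc of that same triangulation bounding a once-marked M\"obius strip, and that the quasi-mutation at the curve is exactly the replacement of the curve by an interior arc of that strip; everything else is bookkeeping on the quantity $k(T)$. Since this local structure is precisely what underlies Proposition 2.4 as proved in \cite{quasi}, I would cite it there and keep the argument here at the level of the reduction sketched above.
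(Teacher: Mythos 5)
Your proposal is correct and follows essentially the same route as the paper: the text preceding the proposition argues exactly that every one-sided closed curve in a quasi-triangulation sits inside a once-marked M\"obius strip cut out by an arc of the triangulation, so flipping each such curve reduces to an ordinary triangulation, and then one invokes the well-known connectedness of the flip graph of triangulations. Your version merely makes the bookkeeping (the count $k(T)$ of one-sided curves and the fact that quasi-mutation strictly decreases it) explicit.
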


\begin{cor}
\label{rank}
The number of quasi-arcs in a triangulation of $(S,M)$ is an invariant of $(S,M)$.

\end{cor}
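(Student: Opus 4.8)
The plan is to deduce this immediately from Propositions \ref{flip} and \ref{flip connected}: flips preserve cardinality, and the flip graph is connected, so any two quasi-triangulations have the same number of elements.

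First I would check that a quasi-mutation does not change the size of a quasi-triangulation. Let $T$ be a quasi-triangulation and $\gamma \in T$. By Proposition \ref{flip}, $\mu_{\gamma}(T) = T \setminus \{\gamma\} \cup \{\gamma'\}$ is a quasi-triangulation with $\gamma' \neq \gamma$. If $\gamma'$ already belonged to $T \setminus \{\gamma\}$, then $\mu_{\gamma}(T) = T \setminus \{\gamma\}$ would be a proper subset of the maximal collection $T$, contradicting that $\mu_{\gamma}(T)$ is itself a maximal collection of mutually compatible arcs. Hence $\gamma' \notin T$, and $|\mu_{\gamma}(T)| = |T|$.

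Next, by Proposition \ref{flip connected} the flip graph of quasi-triangulations of $(S,M)$ is connected, so any two quasi-triangulations $T$ and $T'$ are joined by a finite path of flips $T = T_0, T_1, \dots, T_k = T'$ with $T_{i+1} = \mu_{\gamma_i}(T_i)$ for suitable $\gamma_i$. Applying the previous paragraph along this path gives $|T| = |T_0| = |T_1| = \cdots = |T_k| = |T'|$. Since a triangulation is in particular a quasi-triangulation, it follows that the number of quasi-arcs (here, arcs) in a triangulation of $(S,M)$ depends only on $(S,M)$, as claimed.

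I do not expect any genuine obstacle here — the only point requiring a word of justification is that the flipped quasi-arc $\gamma'$ is not already present in $T$, which is handled above via maximality. This invariant will be the quantity appearing as the dimension parameter $n-1$ in the later sphericity statements.
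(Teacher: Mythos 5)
Your argument is exactly the one the paper intends: the corollary is stated immediately after Proposition~\ref{flip connected} with no separate proof, precisely because flips preserve cardinality and the flip graph is connected. Your extra care in checking $\gamma' \notin T$ via maximality is a reasonable small detail to spell out, but it does not represent a departure from the paper's route.
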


\begin{defn}

The \textit{\textbf{quasi-arc complex}} $Arc(S,M)$ is the simplicial complex on the ground set $A^{\otimes}(S,M)$ such that $k$-simplices correspond to sets of $k$ mutually compatible quasi-arcs. In particular, the vertices in $Arc(S,M)$ are the elements of $A^{\otimes}(S,M)$ and the maximum simplices are the quasi-triangulations.

\end{defn}

Together, Corollary \ref{rank} and Proposition \ref{flip} prove the following proposition.

\begin{prop}

$A^{\otimes}(S,M)$ is a pseudo-manifold. That is, each maximal simplex in $A^{\otimes}(S,M)$ has the same cardinality, and each simplex of co-dimension $1$ is contained in precisely two maximal simplices.

\end{prop}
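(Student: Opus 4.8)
The plan is to deduce both halves of the statement directly from the structural results already established, after recording the observation that the maximal simplices of $Arc(S,M)$ are exactly the quasi-triangulations of $(S,M)$, and that every collection of mutually compatible quasi-arcs is contained in one (this is precisely what ``maximal collection'' in the definition of a quasi-triangulation provides).

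First I would prove equicardinality. By Proposition \ref{flip}, for a quasi-triangulation $T$ and any $\gamma \in T$ the quasi-mutation $\mu_{\gamma}(T) = T \setminus \{\gamma\} \cup \{\gamma'\}$ is again a quasi-triangulation with $|\mu_{\gamma}(T)| = |T|$. Since the flip graph of quasi-triangulations of $(S,M)$ is connected by Proposition \ref{flip connected}, cardinality is constant along it, so all quasi-triangulations --- hence all maximal simplices of $Arc(S,M)$ --- share a common cardinality $r$; this is exactly the invariant of Corollary \ref{rank}.

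Next I would treat the codimension-one faces. Let $\sigma$ be a simplex of cardinality $r-1$, i.e.\ a collection of $r-1$ mutually compatible quasi-arcs. Extend $\sigma$ to a quasi-triangulation $T$; since $|T| = r$, we have $T = \sigma \cup \{\gamma\}$ for a uniquely determined $\gamma$. Proposition \ref{flip} applied to $T$ and $\gamma$ yields a unique $\gamma' \neq \gamma$ with $\sigma \cup \{\gamma'\} = \mu_{\gamma}(T)$ a quasi-triangulation, so $\sigma$ lies in the two distinct maximal simplices $T$ and $\mu_{\gamma}(T)$. Conversely, if $\sigma \cup \{\delta\}$ is any quasi-triangulation with $\delta \neq \gamma$, then it has the form $(T \setminus \{\gamma\}) \cup \{\delta\}$, and the uniqueness clause of Proposition \ref{flip} forces $\delta = \gamma'$. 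Hence $T$ and $\mu_{\gamma}(T)$ are the only maximal simplices containing $\sigma$, completing the proof.

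There is no serious obstacle here: the essential content is packaged in Propositions \ref{flip} and \ref{flip connected}. The only point I would be careful to state explicitly is that a simplex of codimension one extends to a full-rank quasi-triangulation by the addition of a single quasi-arc --- which is immediate once equicardinality is known --- since it is this fact that lets us invoke the existence-and-uniqueness of the flip to pin the number of adjacent maximal simplices to exactly two.
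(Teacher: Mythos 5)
Your proof is correct and is in substance exactly the argument the paper intends: the paper's own proof is the single line that Corollary \ref{rank} (equicardinality of quasi-triangulations) together with the existence-and-uniqueness of flips in Proposition \ref{flip} gives the pseudo-manifold property, and you have simply unpacked both halves in full detail, re-deriving Corollary \ref{rank} from Propositions \ref{flip} and \ref{flip connected} along the way. No gaps, no divergence from the intended route.
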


\begin{thm}[\cite{quasi}, Theorem 7.2]

Given a non-orientable bordered surface $(S,M)$ then $Arc(S,M)$ is finite if and only if $(S,M)$ is $\textnormal{M}_n$, the M\"obius strip with $n$ marked points on the boundary.

\end{thm}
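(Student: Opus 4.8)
The plan is to reduce the statement to a count of isotopy classes of quasi-arcs and then treat the two implications separately. Since all quasi-triangulations of $(S,M)$ have the same finite cardinality by Corollary \ref{rank}, the complex $Arc(S,M)$ is finite exactly when its vertex set $A^{\otimes}(S,M)$ is finite; so it suffices to decide, for non-orientable $(S,M)$, when there are only finitely many quasi-arcs up to isotopy.

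For the ``if'' direction I would verify directly that $\textnormal{M}_n$ carries only finitely many quasi-arcs. Up to isotopy $\textnormal{M}_n$ contains exactly one one-sided simple closed curve, namely the core of its crosscap (its $\mathbb{Z}/2$-homology has a single non-trivial, one-sided class), so the point is to bound the number of honest arcs. An essential arc of $\textnormal{M}_n$ is either separating, in which case it cuts off a disc and is therefore an arc of a polygon (of which there are finitely many), or non-separating, in which case its complement is a single disc; one checks that for each (possibly coincident) pair of marked points there are only finitely many non-separating arcs joining them. Hence $A^{\otimes}(\textnormal{M}_n)$, and therefore $Arc(\textnormal{M}_n)$, is finite. (Alternatively one fixes a triangulation of $\textnormal{M}_n$ and bounds the number of times a quasi-arc must cross it.)

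For the ``only if'' direction I would argue the contrapositive: if $(S,M)$ is non-orientable and is not a M\"obius strip, then $A^{\otimes}(S,M)$ is infinite. The mechanism is the standard one for producing infinitely many arcs: find a two-sided simple closed curve $c\subset S$ whose Dehn twist $T_c$ has infinite order in the mapping class group of $(S,M)$, together with an arc $\alpha$ with endpoints in $M$ meeting $c$ essentially; then the geometric intersection numbers $i(T_c^{k}\alpha,\alpha)$ tend to infinity with $k$, so the arcs $T_c^{k}\alpha$ are pairwise non-isotopic. To locate such a $c$ I would use that $S$, being non-orientable but not a M\"obius strip, has room to spare: its non-orientable genus is at least $2$, or it has genus $1$ with at least two boundary components. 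When the genus is at least $2$ one can take $c$ to be a two-sided non-separating curve cutting off a once-holed Klein bottle (or an extra crosscap); when $S$ has genus $1$ with at least three boundary components one can take $c$ to separate two boundary components from the remainder; and when $S$ is genus $1$ with exactly two boundary components one can take $c$ to be the boundary of a regular neighbourhood of the crosscap, whose complement is a pair of pants. In each case $c$ is two-sided and essential and neither bounds a disc nor is parallel to an unmarked boundary component, so $T_c$ has infinite order; and since $M$ is non-empty and $c$ is essential, connectedness of $S$ supplies a marked point $p$ and an arc from $p$ crossing $c$ (twice, when $c$ separates), giving the required $\alpha$. Hence $A^{\otimes}(S,M)$ is infinite.

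I expect the last paragraph to be the main obstacle. First, the case analysis must genuinely exhaust all non-orientable $(S,M)$ other than the M\"obius strips and produce an admissible curve $c$ in each case; the delicate case is the genus-one surface with two boundary components, where the natural curve $c$ bounds a M\"obius band, so one must check that $T_c$ nonetheless has infinite order --- which it does precisely because the other side of $c$ is a pair of pants rather than an annulus, equivalently because the associated crosscap slide ceases to be an involution once there is an extra boundary component. This is exactly why $\textnormal{M}_n$ itself escapes: its only two-sided essential simple closed curve bounds the crosscap, and there the corresponding twist is trivial. Second, turning the finiteness argument for $\textnormal{M}_n$ into a clean count --- making the dichotomy ``separating arc of a polygon'' versus ``non-separating arc with disc complement'' fully rigorous, and genuinely bounding the latter --- is the other point that needs care.
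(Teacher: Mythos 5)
The paper does not prove this statement; it quotes it verbatim as Theorem~7.2 of Dupont--Palesi (the citation is in the theorem header), so there is no in-paper proof for me to compare yours against. What follows is therefore an assessment of your argument on its own terms.

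Your overall strategy is the standard and, I believe, correct one: reduce finiteness of $Arc(S,M)$ to finiteness of $A^{\otimes}(S,M)$, count directly for $\textnormal{M}_n$, and for everything else produce infinitely many arcs via an infinite-order Dehn twist applied to an arc that meets the twisting curve essentially. The pieces that need tightening are the ones you flag, plus one phrasing slip. In the genus $\geq 2$ case you describe $c$ as ``a two-sided non-separating curve cutting off a once-holed Klein bottle'' --- ``non-separating'' and ``cutting off'' are incompatible; what you want is either (i) a two-sided non-separating curve (e.g.\ the two-sided curve obtained by concatenating two disjoint one-sided cores), whose twist has infinite order, or (ii) the separating curve $\partial N(\mu_1\cup\mu_2)$ bounding a one-holed Klein bottle, and in the latter case you should say why its twist has infinite order (the orientation double cover reduces it to twists about essential curves in an orientable surface of negative Euler characteristic). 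In the genus $1$, two-boundary case your instinct is right, and the same double-cover argument shows $T_c$ has infinite order even though $c$ bounds a M\"obius band; it is worth writing this down explicitly since it is exactly the point at which $\textnormal{M}_n$ itself escapes. Finally, for the ``if'' direction, the dichotomy ``separating arc cuts off a disc'' is fine (Euler characteristic forces one side to be a disc), but the claim ``for each pair of marked points there are only finitely many non-separating arcs'' is the crux and you give no argument; the cleanest route is your parenthetical alternative --- fix a quasi-triangulation and bound crossings --- or, equivalently, observe that any essential arc can be isotoped to meet the one-sided core $\mu$ at most once, so that it lives either in $C_{n,0}$ (already known to have finitely many arcs, with the unmarked boundary absorbing twists) or is determined up to finitely many choices by its endpoint pair and single crossing of $\mu$.
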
 

Moreover, $Arc(\textnormal{M}_n$) has some seemingly nice properties. Figure \ref{fig:polytopes} shows that for $n \in \{1,2,3\}$ it is polytopal.

\begin{figure}[H]
\centering
\includegraphics[width=133mm]{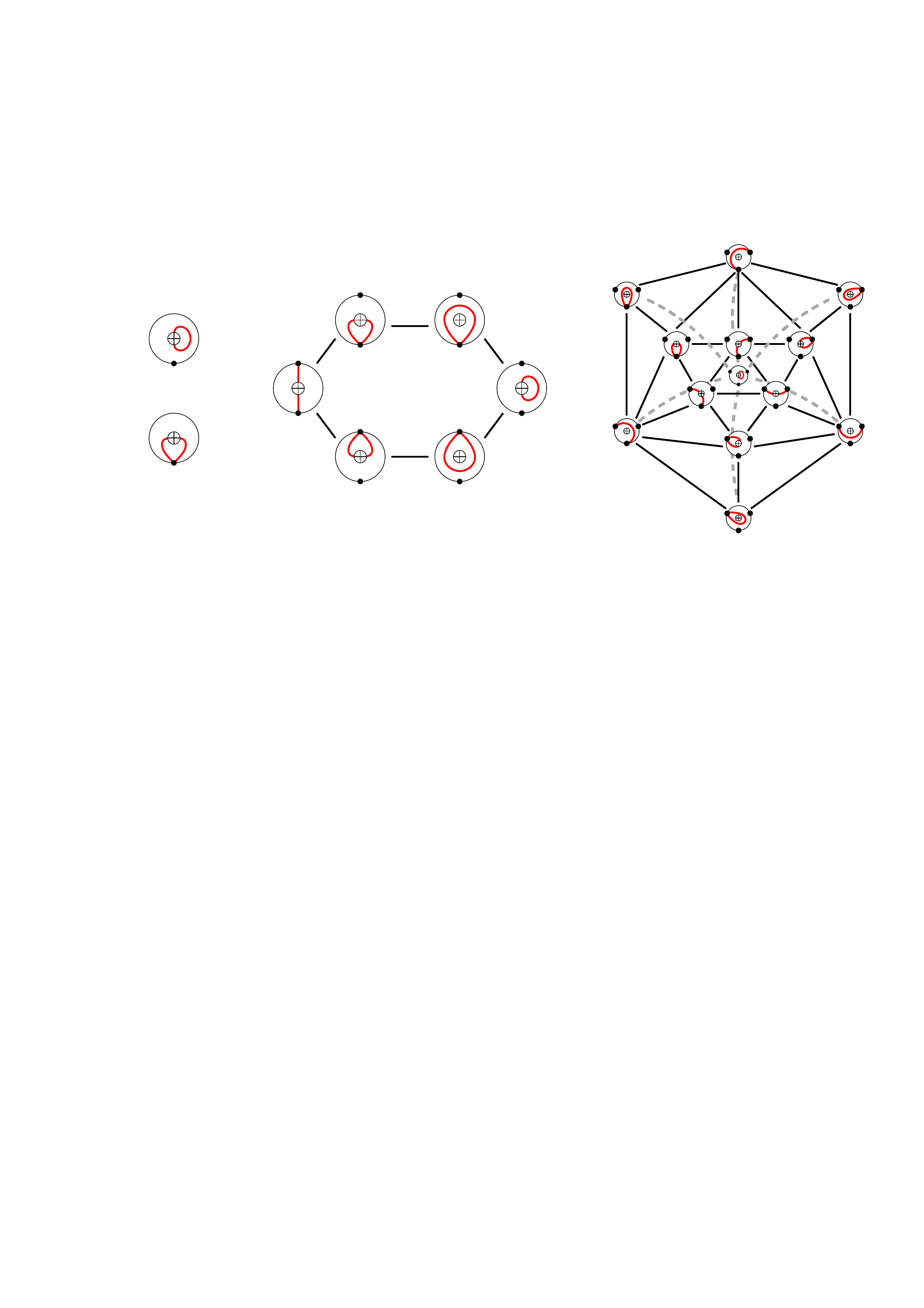}
\caption{The quasi-arc complexes of $\text{M}_1,\text{M}_2$ and $\text{M}_3$.}
\label{fig:polytopes}
\end{figure}

\section{Shellability}
\label{shellability}

In this section we recall some basic facts about shellability, and introduce the fundamental ideas used throughout this paper.

\subsection{Definition of shellability and basic facts.}

\begin{defn}  An $n$-dimensional simplicial complex is called \textit{\textbf{pure}} if its maximal simplices all have dimension $n$.

\end{defn}

\begin{defn}

Let $\Delta$ be a finite (or countably infinite) simplicial complex. An ordering $C_1, C_2 \ldots $ of the maximal simplices of $\Delta$ is a \textit{\textbf{shelling}} if the complex $B_k := \big( \bigcup_{i=1}^{k-1}C_i\big) \cap C_k$ is pure and $(dim(C_k) -1)$-dimensional for all $k \geq 2$.

\end{defn}

\begin{defn}

The \textit{\textbf{simplicial join}} $\Delta_1 \ast \Delta_2$ of two simplicial complexes $\Delta_1$ and $\Delta_2$ on disjoint ground sets has its faces being sets of the form $\sigma_1 \cup \sigma_2$ where $\sigma_1 \in \Delta_1$ and $\sigma_2 \in \Delta_2$.

\end{defn}

The following proposition is a simple and well known result. For instance, see \cite{bjorner}.

\begin{prop}
\label{join}
The simplicial join $\Delta_1 \ast \Delta_2$ is shellable \textit{if and only if} the simplicial complexes $\Delta_1 , \Delta_2$ are both shellable.

\end{prop}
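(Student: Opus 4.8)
The plan is to verify both directions directly from the definition of shelling, using the fact that, since $\Delta_1$ and $\Delta_2$ live on disjoint ground sets, every face of $\Delta_1 \ast \Delta_2$ is uniquely of the form $\sigma_1 \cup \sigma_2$ with $\sigma_i \in \Delta_i$. Consequently the maximal simplices of $\Delta_1 \ast \Delta_2$ are exactly the sets $F \cup G$ with $F$ a maximal simplex of $\Delta_1$ and $G$ a maximal simplex of $\Delta_2$, and for two such one has $(F \cup G) \cap (F' \cup G') = \overline{F \cap F'} \ast \overline{G \cap G'}$, where $\overline{(\cdot)}$ denotes the full simplex on a vertex set.

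For the ``if'' direction, I would fix shellings $F_1, \dots, F_s$ of $\Delta_1$ and $G_1, \dots, G_t$ of $\Delta_2$ and order the maximal simplices of $\Delta_1 \ast \Delta_2$ lexicographically, so that $F_i \cup G_j$ precedes $F_{i'} \cup G_{j'}$ precisely when $i < i'$, or $i = i'$ and $j < j'$. Writing $B^{(1)}_i = \bigl(\bigcup_{a < i} F_a\bigr) \cap F_i$ and $B^{(2)}_j = \bigl(\bigcup_{b < j} G_b\bigr) \cap G_j$ for the shelling intersections in the two factors (pure of dimension $\dim F_i - 1$, resp.\ $\dim G_j - 1$, whenever $i \geq 2$, resp.\ $j \geq 2$), I would use the intersection formula above, together with the observation that letting $b$ range over \emph{all} maximal simplices of $\Delta_2$ gives $\bigcup_b \overline{G_b \cap G_j} = \overline{G_j}$, to show that the shelling intersection of $F_i \cup G_j$ equals $\bigl(B^{(1)}_i \ast \overline{G_j}\bigr) \cup \bigl(\overline{F_i} \ast B^{(2)}_j\bigr)$. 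Each of the two joined complexes is pure of dimension $\dim(F_i \cup G_j) - 1$; a union of two pure complexes of the same dimension $d$ is again pure of dimension $d$; and the union is nonempty whenever $i \geq 2$ or $j \geq 2$. Hence the lexicographic order is a shelling.

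For the ``only if'' direction, suppose $C_1, \dots, C_m$ is a shelling of $\Delta_1 \ast \Delta_2$; write $C_k = F_k \cup G_k$, fix a maximal simplex $G_0$ of $\Delta_2$, and let $i_1 < \dots < i_s$ be the indices $k$ with $G_k = G_0$. Since $F \cup G_0$ is a maximal simplex of $\Delta_1 \ast \Delta_2$ for every maximal simplex $F$ of $\Delta_1$, and therefore occurs among the $C_k$, the list $F_{i_1}, \dots, F_{i_s}$ enumerates the maximal simplices of $\Delta_1$ without repetition. The key point is that a face $\sigma_1 \cup G_0$ of $C_{i_r}$ lies in the shelling intersection $B_{i_r}$ of $C_{i_r}$ in $\Delta_1 \ast \Delta_2$ if and only if $\sigma_1 \in \bigl(\bigcup_{l < r} F_{i_l}\bigr) \cap F_{i_r}$, because $G_0 \subseteq G_k$ forces $G_k = G_0$ ($G_0$ being maximal). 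Combining this with purity of $B_{i_r}$: if $\tau$ is a maximal face of $\bigl(\bigcup_{l < r} F_{i_l}\bigr) \cap F_{i_r}$, then any face of $B_{i_r}$ containing $\tau \cup G_0$ must, by the disjoint-ground-set splitting, again have $\Delta_2$-part equal to $G_0$ and $\Delta_1$-part equal to $\tau$, so $\tau \cup G_0$ is a maximal face of $B_{i_r}$, hence of dimension $\dim C_{i_r} - 1$, which forces $\dim \tau = \dim F_{i_r} - 1$. Thus $F_{i_1}, \dots, F_{i_s}$ is a shelling of $\Delta_1$, and by symmetry $\Delta_2$ is shellable as well.

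I expect the main obstacle to be the bookkeeping in the ``if'' direction --- correctly identifying the shelling intersection of $F_i \cup G_j$ and checking purity of the resulting union --- together with the nonemptiness conditions and the low-dimensional edge cases (e.g.\ $0$-dimensional factors, or the void complex arising as a shelling intersection) in both directions; beyond these, the conceptual content is slight.
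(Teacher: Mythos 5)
Your argument is correct in both directions, and the edge cases you flag (void shelling intersections at $i=1$ or $j=1$, low-dimensional factors) are indeed handled by the two-piece decomposition $B_{(i,j)} = (B^{(1)}_i \ast \overline{G_j}) \cup (\overline{F_i} \ast B^{(2)}_j)$: when $i=1$ the first piece is void and the second does all the work, and symmetrically when $j=1$. The paper itself does not prove this proposition but simply cites Bj\"orner--Wachs; your proof is essentially the standard one found there. For the ``if'' direction you use the lexicographic (product) ordering of facet pairs and compute the shelling intersection via the observation that $\bigcup_{b}\overline{G_b\cap G_j}=\overline{G_j}$, which is exactly the usual trick. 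For the ``only if'' direction you restrict the shelling of the join to the ``fiber'' over a fixed facet $G_0$ of $\Delta_2$; the key point, that a face of $C_{i_r}$ containing $G_0$ has $\Delta_2$-part exactly $G_0$ by maximality, correctly transfers both membership in $B_{i_r}$ and maximality of faces down to $\Delta_1$, forcing the dimension count $\dim\tau=\dim F_{i_r}-1$. One small remark worth making explicit: for $r\ge 2$ the complex $\bigl(\bigcup_{l<r}\overline{F_{i_l}}\bigr)\cap\overline{F_{i_r}}$ always contains at least the empty face, hence has a maximal face, so your dimension argument is never vacuous and nonemptiness of the shelling intersection in $\Delta_1$ comes for free. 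Nothing is missing.
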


In particular, Proposition \ref{join} tells us that the cone over a shellable complex is itself shellable.

\begin{prop}
\label{Shelling}

If $\Delta = Arc(S,M)$ then finding a shelling for $\Delta$ is equivalent to ordering the set of triangulations $T_i$ of $(S,M)$ so that $\forall k$ and $\forall j < k$ $\exists i < k $ such that $T_i$ is related to $T_k$ by a mutation and $T_j \cap T_k \subseteq T_i \cap T_k$.

\begin{proof} 

Note that triangulations $T_i$ of $S$ correspond to maximal simplices in $Arc(S,M)$ and that partial triangulations $T_i \cap T_j$ correspond to simplices of $Arc(S)$. Note that $T_i \cap T_k$ is a $(dim(T_k) -1)$-simplex $\textit{iff}$ $T_k$ is a mutation away from $T_k$. Furthermore, since $B_k := \big( \bigcup_{i=1}^{k-1}T_i\big) \cap T_k$ must be pure and $(dim(T_k) -1)$-dimensional for all $k \geq 2$, it follows that $B_k$ is the union of $(dim(T_k) -1)$-simplices. So we must have that $\forall j < k$ $\exists i < k $ such that $T_i$ is a mutation away from $T_k$ and the partial triangulation $T_j \cap T_k$ is a face of $T_i \cap T_k$ (i.e $T_j \cap T_k \subseteq T_i \cap T_k$).

\end{proof}

\end{prop}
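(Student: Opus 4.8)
The plan is to unwind the definition of shelling entirely in the language of triangulations, using the dictionary between faces of $Arc(S,M)$ and partial triangulations of $(S,M)$. First I would record the two translations that are needed. By Corollary \ref{rank} every triangulation $T_i$ (equivalently, every maximal simplex of $\Delta$) has the same number $r$ of quasi-arcs, so $T_i \cap T_k$ is a codimension-$1$ face of $T_k$ precisely when $T_i$ and $T_k$ share exactly $r-1$ quasi-arcs; and by Proposition \ref{flip} this happens if and only if $T_i = \mu_{\gamma}(T_k)$ for the (necessarily unique) $\gamma \in T_k$, i.e.\ $T_i$ is a single mutation away from $T_k$. Consequently the codimension-$1$ faces of $T_k$ that lie in $B_k := \big(\bigcup_{i<k}T_i\big)\cap T_k$ are exactly the sets $T_i \cap T_k$ with $i<k$ and $T_i$ related to $T_k$ by a mutation.

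Next I would observe that $B_k = \bigcup_{i<k}(T_i \cap T_k)$, so each $T_j \cap T_k$ with $j<k$ is a face of $B_k$, and conversely every face of $B_k$ is contained in some such $T_j \cap T_k$. Since $B_k$ lies inside the boundary complex of the simplex $T_k$, it is automatically of dimension at most $dim(T_k)-1$. Hence the condition ``$B_k$ is pure and $(dim(T_k)-1)$-dimensional'' is equivalent to ``every face of $B_k$ is contained in some $(dim(T_k)-1)$-face of $B_k$'', which by the previous paragraph says exactly: for every $j<k$ there is an $i<k$ with $T_i$ a mutation of $T_k$ and $T_j \cap T_k \subseteq T_i \cap T_k$. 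Quantifying over all $k\geq 2$ yields precisely the displayed reformulation, so an ordering $T_1,T_2,\ldots$ is a shelling if and only if it satisfies the stated property.

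The only slightly delicate point is checking that ``pure of the correct dimension'' and ``every face dominated by a codimension-$1$ face'' genuinely coincide in this setting: one must rule out lower-dimensional maximal faces of $B_k$ and confirm that $B_k$ actually attains dimension $dim(T_k)-1$. This is where I would invoke that $B_k \subseteq \partial T_k$ together with the pseudo-manifold property and flip-connectedness (Proposition \ref{flip connected}): for $k\geq 2$ any nonempty $B_k$ contains at least one codimension-$1$ face, and the domination requirement then forces every maximal face of $B_k$ to have that dimension. I expect the argument to be short; the care lies in phrasing this equivalence cleanly rather than in overcoming any real obstacle.
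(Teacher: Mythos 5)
Your proof is correct and takes essentially the same approach as the paper: translate the shelling condition through the dictionary maximal simplices $\leftrightarrow$ triangulations, codimension-one faces $\leftrightarrow$ mutation-adjacency (via Corollary \ref{rank} and Proposition \ref{flip}), and then unwind ``pure and $(\dim T_k - 1)$-dimensional'' into the domination statement. One small remark: the worry in your final paragraph is unfounded, and the appeal to flip-connectedness and the pseudo-manifold property there is unnecessary --- once $B_k \subseteq \partial T_k$, the equivalence of ``pure of dimension $\dim T_k - 1$'' with ``every $T_j \cap T_k$ is contained in some codimension-one $T_i \cap T_k$'' is immediate (the domination hypothesis itself supplies the codimension-one faces and rules out lower-dimensional maximal ones), and your claim that \emph{any} nonempty $B_k$ automatically contains a codimension-one face is false for an arbitrary ordering.
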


Proposition \ref{Shelling} motivates Definition \ref{shelling defn}.

\begin{defn}
\label{shelling defn}
Given a subcollection of triangulations $\Gamma$ of a surface $S$ call $\Gamma$ \textit{\textbf{shellable}} if it admits an ordering of $\Gamma$ such that $\forall k$ and $\forall j < k$ $\exists i < k $ such that $T_i$ is related to $T_k$ by a mutation and $T_j \cap T_k \subseteq T_i \cap T_k$.

\end{defn}

\begin{defn}

We say two sets of triangulations $A$, $B$ are \textit{\textbf{equivalent}} if their induced simplicial complexes are isomorphic, up to taking cones. If $A$ and $B$ are equivalent we write $A \equiv B$.

\end{defn}

\begin{rmk}

Let $\Delta_{A}$ denote the induced simplicial complex of a set of triangulations $A$. Note that taking a cone over $\Delta_{A}$ can be thought of as disjointly adding one particular arc to every triangulation in $A$.

\end{rmk}

The following proposition is just a special case of Proposition \ref{join}.

\begin{prop}

If $A \equiv B$ then $A$ is shellable \text{if and only if} $B$ is shellable.

\end{prop}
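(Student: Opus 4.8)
The plan is to reduce this proposition to Proposition \ref{join} by exhibiting the induced complexes $\Delta_A$ and $\Delta_B$ as simplicial joins of a common factor with isomorphic complements. First I would unwind the definition of equivalence: $A \equiv B$ means there is a simplicial isomorphism $\Delta_A \cong \Delta_B$ after taking cones, i.e. there exist simplicial complexes $\Gamma_A, \Gamma_B$, each obtained from $\Delta_A$ (respectively $\Delta_B$) by coning off some finite set of extra vertices, with $\Gamma_A \cong \Gamma_B$. Using the remark that coning corresponds to disjointly adjoining a fixed arc to every triangulation in the collection, I would write $\Gamma_A = \Delta_A \ast \Sigma_A$ and $\Gamma_B = \Delta_B \ast \Sigma_B$ where $\Sigma_A$ and $\Sigma_B$ are simplices (full simplicial complexes on the adjoined vertex sets), hence in particular shellable.

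Next I would run the following chain of equivalences. Since $\Sigma_A$ is a simplex it is shellable, so by Proposition \ref{join} applied to $\Gamma_A = \Delta_A \ast \Sigma_A$, the complex $\Delta_A$ is shellable if and only if $\Gamma_A$ is shellable. Likewise $\Delta_B$ is shellable if and only if $\Gamma_B$ is shellable. Finally, shellability is manifestly invariant under simplicial isomorphism — a shelling order of the facets of $\Gamma_A$ transports across the isomorphism $\Gamma_A \cong \Gamma_B$ to a shelling order of the facets of $\Gamma_B$, because the defining condition on the intersections $B_k$ being pure and of codimension one is purely combinatorial. Stringing these three biconditionals together yields: $\Delta_A$ shellable $\iff$ $\Gamma_A$ shellable $\iff$ $\Gamma_B$ shellable $\iff$ $\Delta_B$ shellable, which (recalling Definition \ref{shelling defn}, where shellability of a set of triangulations is defined via its induced complex) is exactly the claim that $A$ is shellable if and only if $B$ is shellable.

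I do not anticipate a genuine obstacle here; the proposition really is, as the paper says, just a repackaging of Proposition \ref{join}. The only mild care needed is bookkeeping: making sure the two coning operations used to identify $\Delta_A$ with $\Delta_B$ can be taken on disjoint ground sets and that the resulting joins genuinely have the simplex (cone point set) as a shellable factor, so that Proposition \ref{join} applies in both directions. Once that is set up, the argument is a three-line formality, and I would present it as such rather than belabouring it.
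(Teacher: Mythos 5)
Your proposal is correct and is precisely the unpacking the paper has in mind: the paper dispenses with the proof by noting the proposition is ``just a special case of Proposition \ref{join},'' and your chain $\Delta_A$ shellable $\iff \Gamma_A$ shellable $\iff \Gamma_B$ shellable $\iff \Delta_B$ shellable (via joins with simplices and invariance of shellability under simplicial isomorphism) is exactly that reduction made explicit.
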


\noindent \textbf{\underline{Notation}:}

\begin{itemize}

\item $\displaystyle\order_{i=1}^{n} x_i$ is the ordering $x_1,x_2,\ldots , x_n$ of the set $\{x_i | 1\leq i \leq n\}$. \\

\item $\displaystyle\order_{i\in I} x_i$ is any ordering of the set $\{x_i | i \in I\}$.\\

\item Let $C_{n,0}$ denote the cylinder with $n$ marked points on one boundary component and no marked points on the other. Fix an orientation on the boundary component containing marked points and cyclically label them $1, \ldots, n$. Let $[i,j]$ denote the boundary segment $i\rightarrow j$. \\
\indent Note that $C_{n,0}$ arises as the partial triangulation of $\textnormal{M}_n$ consisting of a one-sided closed curve. We choose the canonical way of defining arcs on $C_{n,0}$.

\item Let $\gamma$ be an arc of $C_{n,0}$ with endpoints $i,j$. If $\gamma$ encloses a cylinder with boundary $[j,i]\cup {\gamma}$ then $\gamma := <i,j>$. If $\gamma$ encloses a cylinder with boundary $[i,j]\cup {\gamma}$ then $\gamma := <j,i>$, see Figure \ref{fig:pictureofarcs}.

\begin{figure}[H]
\centering
\includegraphics[width=80mm]{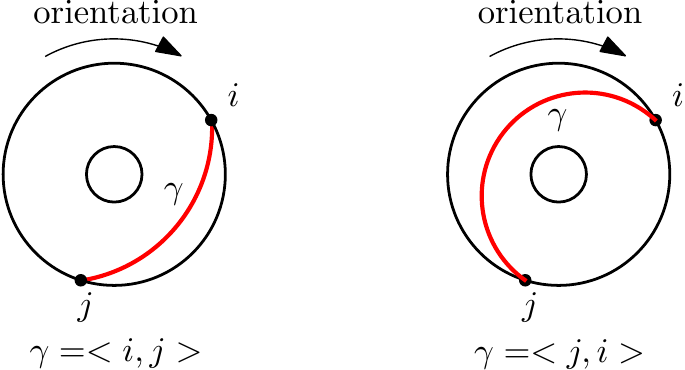}
\caption{Notation for an arc $\gamma$ of $C_{n,0}$}
\label{fig:pictureofarcs}
\end{figure}

\end{itemize}

The following theorem provides a very useful application of shellability.

\begin{thm}[Danaraj and Klee,\cite{shelling}]
\label{Ziegler}
Let $\Delta$ be a simplicial complex of dimension $n$. If $\Delta$ is a shellable psuedomanifold without boundary, then it is a PL $n$-sphere.

\end{thm}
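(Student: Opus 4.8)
The statement to prove is the Danaraj--Klee theorem, so the plan is to reconstruct their argument. The right thing to prove is the more flexible assertion \emph{a shellable pseudomanifold $\Delta$ of dimension $n$ is a PL $n$-ball if it has non-empty boundary and a PL $n$-sphere if it has no boundary}; Theorem \ref{Ziegler} is the boundaryless case. Fix a shelling $C_1,\dots,C_m$ and set $\Delta_k := C_1\cup\cdots\cup C_k$ and $B_k := \Delta_{k-1}\cap C_k$, which by definition of a shelling is a pure $(n-1)$-dimensional subcomplex of $\partial C_k\cong S^{n-1}$. I would run a double induction, first on $n$ and then on $m$, proving that $\Delta_k$ is a PL $n$-ball for $1\le k\le m-1$ and that $\Delta_m=\Delta$ is a PL $n$-ball or PL $n$-sphere according to whether $\Delta$ has boundary. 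The base cases ($n=0$, and $m=1$ where $\Delta$ is a single simplex, hence a ball) are immediate.

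The combinatorial heart is the following dichotomy, where the pseudomanifold hypothesis enters: for $2\le k\le m$, either $B_k$ is a proper subcomplex of $\partial C_k$, in which case $B_k\subseteq\partial\Delta_{k-1}\cap\partial C_k$; or $B_k=\partial C_k$, and then $k=m$ and $\partial\Delta_{m-1}=\partial C_m$. This is pure incidence counting: each $(n-1)$-face of $\Delta$ lies in at most two facets of $\Delta$, so an $(n-1)$-face of $C_k$ that already occurs in $\Delta_{k-1}$ must lie in exactly one facet of $\Delta_{k-1}$ (hence on $\partial\Delta_{k-1}$), and if \emph{every} $(n-1)$-face of $C_k$ already occurs then $\Delta_k$ is a closed pseudomanifold, which by strong connectivity of the shelling forces $k=m$. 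Separately I would observe that $B_k$, being a union of facets of the boundary complex of a simplex, is itself shellable and is an $(n-1)$-dimensional pseudomanifold; by the inductive hypothesis on $n$ it is therefore a PL $(n-1)$-ball whenever it is proper in $\partial C_k$ (it cannot be an $(n-1)$-sphere, being a proper subcomplex of one).

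The topology is then routine PL surgery. For $1<k<m$ we have $\Delta_k=\Delta_{k-1}\cup C_k$ with $\Delta_{k-1}$ and $C_k$ PL $n$-balls meeting in the PL $(n-1)$-ball $B_k$, which lies on the boundary of each; the standard gluing lemma (two PL $n$-balls glued along a PL $(n-1)$-ball in their boundaries form a PL $n$-ball) shows $\Delta_k$ is a PL $n$-ball. At the last step, if $\Delta$ has boundary the same lemma applies with $k=m$; if $\Delta$ is boundaryless then $B_m=\partial C_m=\partial\Delta_{m-1}$, and gluing the two PL $n$-balls $\Delta_{m-1}$ and $C_m$ along their entire common boundary $(n-1)$-sphere yields a PL $n$-sphere, which is what we want.

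The main obstacle is the PL-topology input rather than the combinatorics: one needs the gluing lemmas for PL balls and spheres (ball $\cup$ ball along a boundary ball is a ball; ball $\cup$ ball along the whole boundary is a sphere), which are standard but rest on regular-neighbourhood and collaring arguments, and one must be careful that the intermediate complexes $\Delta_k$ need not themselves be pseudomanifolds, so the induction must be organised around the shelling of $\Delta$ rather than around $\Delta_k$ as an abstract complex. Everything else --- the dichotomy for $B_k$ and the identification of the boundary at the final step --- is the elementary incidence bookkeeping carried out in Danaraj and Klee \cite{shelling}.
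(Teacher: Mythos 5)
The paper does not prove this theorem --- it is imported as a black box from Danaraj and Klee \cite{shelling}, so there is no in-paper argument to compare yours against; what you have written is a reconstruction of the Danaraj--Klee proof itself. Your reconstruction is essentially correct and follows the standard line: strengthen to the ball-or-sphere dichotomy, build $\Delta_k$ along the shelling, use the pseudomanifold hypothesis to show $B_k\subseteq\partial\Delta_{k-1}$, identify each proper $B_k$ as a PL $(n-1)$-ball, and invoke the PL gluing lemmas.

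One step is compressed enough to read as a gap: ``strong connectivity of the shelling forces $k=m$.'' Spelled out, the mechanism is that if $B_k=\partial C_k$ then $\partial C_k\subseteq\partial\Delta_{k-1}$, and since both are simplicial $(n-1)$-spheres this inclusion is an equality (a proper pure $(n-1)$-dimensional subcomplex of a triangulated $(n-1)$-sphere cannot itself be an $(n-1)$-sphere); gluing $\Delta_{k-1}$ and $C_k$ along this common boundary sphere makes $\Delta_k$ a PL $n$-sphere and in particular a closed pseudomanifold. If $k<m$, the shelling condition would then force $C_{k+1}$ to share some ridge with $\Delta_k$; that ridge would lie in two facets of $\Delta_k$ plus $C_{k+1}$, so in three facets of $\Delta$, contradicting the pseudomanifold hypothesis. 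That finishes the case $B_k=\partial C_k$. A second, smaller point: you do not actually need the induction on $n$ to see that $B_k$ is a ball. If $B_k$ is the union of $j$ facets of $\partial\sigma^n$ with $1\le j\le n$, then combinatorially $B_k\cong\partial\sigma^{j-1}\ast\sigma^{n-j}$, the join of a PL sphere with a simplex, which is a PL $(n-1)$-ball by inspection. This lets the argument run with a single induction on the number of facets $m$.
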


\subsection{Shellability of $Arc(C_{n,0})$.}

The following proposition will help to prove the shellability of $Arc(\textnormal{M}_n)$, and is introduced now to cement key ideas.

\begin{prop}
\label{shellingcn}
$Arc(C_{n,0})$ is shellable for $n \geq 1$.

\begin{proof}

Consider the collection of triangulations $T(C_{n,0}^1) \subseteq T(C_{n,0})$ containing a loop at vertex $1$. Note that by cutting along the loop we get the $(n+1)$-gon (and a copy of $C_{1,0}$) for $n \geq 2$. We will prove by induction on $n$ that $T(C_{n,0}^1)$ is shellable. For $n=1$ the set $T(C_{1,0}^1) = T(C_{1,0})$ is trivially shellable.  For $n=2$  if we cut along the loop we get the triangle and $C_{1,0}$ which are both trivially shellable, so indeed $T(C_{2,0}^1)$ is shellable by Proposition \ref{join}.\\

Let $Block(i)$ be the set consisting of all triangulations in $T(C_{n,0}^1)$ containing the triangle with vertices $(1,1,i)$ for some $i\in [2,n]$, see Figure \ref{cylinderblock(i)}. \\ \indent Note that $Block(i) \equiv \prod_{j=1}^2 T(C_{m_j,0}^1)$ for $m_j < n$. By induction on $n$, $Block(i)$ is therefore the product of shellable sets. Taking the product of sets of triangulations corresponds to taking the join of their induced simplicial complexes. So Proposition \ref{join} tells us that $Block(i)$ is shellable. Denote this shelling by $S(Block(i))$.

\begin{figure}[H]
\centering
\includegraphics[width=25mm]{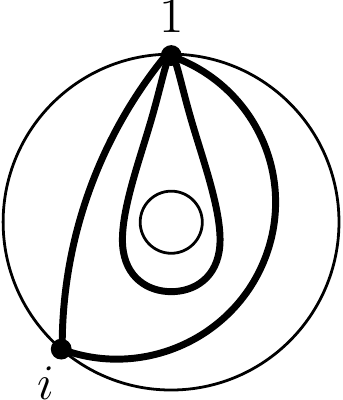}
\caption{$Block(i)$ consists of all triangulations of this partial triangulation.}
\label{cylinderblock(i)}
\end{figure}

\begin{claim}
\label{Claim 1}

The ordering $S(C_{n,0}^1) := \displaystyle\order_{i=n}^{2} S(Block(i))$ is a shelling for $T(C_{n,0}^1)$. \\

\begin{figure}[H]
\centering
\includegraphics[width=90mm]{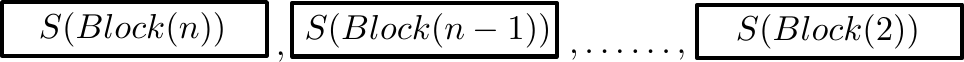}
\end{figure}

\begin{PC}

Let $S$ precede $T$ in the ordering $S(C_{n,0}^1)$. Then $T \in Block(k)$ and $S \in Block(j)$ for $j \geq k $. Since $S(Block(k))$ is a shelling for $Block(k)$ (by inductive assumption) then we may assume $j > k$. The arc $\gamma = <k,1> \in T$ is not compatible with the arc $<1,j> \in S$ so $\gamma \notin S$. Hence $T\cap S \subseteq T\cap \mu_{\gamma}(T)$. By Proposition \ref{Shelling} all that remains to show is that $\mu_{\gamma}(T)$ occurs before $T$ in the ordering. \\ Note that we will have a triangle in $T$ with vertices $(1,k,x)$ where $x \in [n,k+1]$. And so $\mu_{\gamma}(T) \in Block(x)$. Since $x>k$, $\mu_{\gamma}(T)$ does precede $T$ in the ordering. See Figure \ref{fig:claim1}. Hence $T(C_{n,0}^1)$ is shellable. 

\begin{figure}[H]
\centering
\includegraphics[width=105mm]{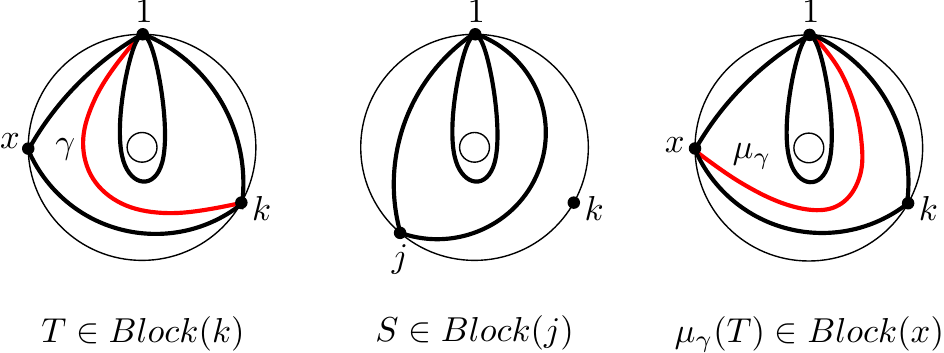}
\caption{}
\label{fig:claim1}
\end{figure}

\hfill \textit{End of proof of Claim \ref{Claim 1}.}

\end{PC}

\end{claim}

Similarly we can shell $T(C_{n,0}^i)$ in the same way $\forall i \in [1,n]$. Denote this shelling by $S(C_{n,0}^i)$

\begin{claim}
\label{Claim 2}

$S(Arc(C_{n,0})) := \displaystyle\order_{i=1}^{n} S(C_{n,0}^i)$ is a shelling for $Arc(C_{n,0})$

\begin{PC}

Let $S$ precede $T$ in the ordering $S(Arc(C_{n,0}))$. Then $T \in S(C_{n,0}^k)$ and $S \in S(C_{n,0}^j)$ for $1 \leq j \leq k$. Since $S(C_{n,0}^k)$ is a shelling we may assume $j<k$. There will be a triangle in $T$ with vertices $(k,k,x)$ for some $x \in [1,n]\backslash\{k\}$. \\

If $x \in [j,k-1]$ then mutate the loop at $k$ to give $T' \in S(C_{n,0}^x)$. $T'$ occurs before $T$ in the ordering because $x \in [j,k-1]$. Moreover since the loop at $k$ cannot occur in $S$ then $T\cap S \subseteq T\cap T'$. See Figure \ref{fig:claim2(1)}.

\begin{figure}[H]
\centering
\includegraphics[width=110mm]{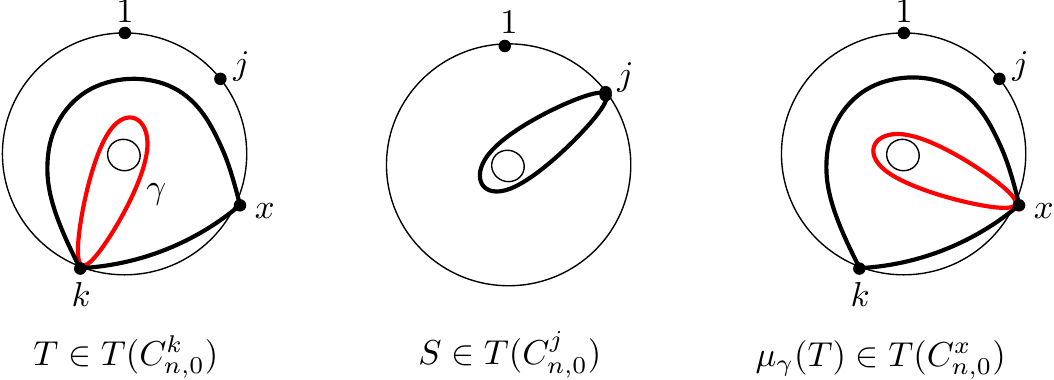}
\caption{Case when $x \in [j,k-1]$}
\label{fig:claim2(1)}
\end{figure}

If $x \in [k+1, j-1]$ then the arc $\gamma = <x,k>$ in $T$ is not compatible with the loop at $j$ in $S$. So $T\cap S \subseteq T\cap \mu_{\gamma}(T)$. Moreover the way we constructed the shelling $S(C_{n,0}^k)$ in Claim \ref{Claim 1} means that $\mu_{\gamma}(T)$ precedes $T$ in the ordering. See Figure \ref{fig:claim2(2)}.

\begin{figure}[H]
\centering
\includegraphics[width=110mm]{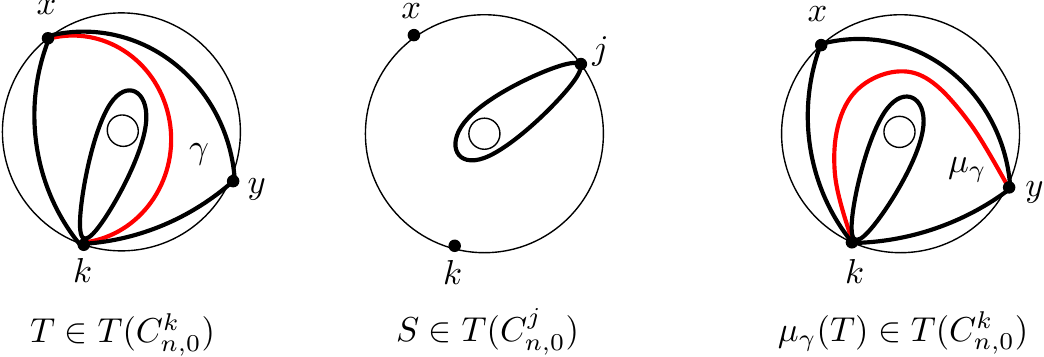}
\caption{Case when $x \in [k+1,j-1]$}
\label{fig:claim2(2)}
\end{figure}

\hfill \textit{End of proof of Claim \ref{Claim 2}.}

\end{PC}

\end{claim}

\end{proof}

\end{prop}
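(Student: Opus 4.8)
The plan is to restate the goal in the language of triangulations via Proposition \ref{Shelling} (equivalently Definition \ref{shelling defn}): it suffices to order the set $T(C_{n,0})$ of triangulations of the cylinder so that for every $k$ and every $j<k$ some earlier triangulation $T_i$ is a flip away from $T_k$ with $T_j\cap T_k\subseteq T_i\cap T_k$. I would argue by induction on $n$, taking as inductive hypothesis that the set $T(C_{m,0}^{1})$ of triangulations of $C_{m,0}$ containing the loop at a fixed vertex is shellable for all $m<n$ (hence, by symmetry, $T(C_{m,0}^{i})$ for every vertex $i$). The base cases $n=1,2$ are immediate: $Arc(C_{1,0})$ is trivially shellable, and cutting $C_{2,0}$ along a loop produces a triangle together with a copy of $C_{1,0}$ carrying no further arcs, so Proposition \ref{join} gives shellability.

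For the inductive step I would first build a shelling of $T(C_{n,0}^{1})$. Cutting along the loop at vertex $1$ identifies a triangulation in $T(C_{n,0}^{1})$ with a triangulation of the $(n+1)$-gon (plus an inert copy of $C_{1,0}$), and I would stratify by the triangle $(1,1,j)$ incident to the loop, letting $Block(j)$, $j\in[2,n]$, be the triangulations containing it. Its two remaining sides cut the $(n+1)$-gon into two smaller polygons, so $Block(j)$ is equivalent, up to cones, to a product $T(C_{m_1,0}^{\bullet})\times T(C_{m_2,0}^{\bullet})$ with $m_1,m_2<n$; by the inductive hypothesis each factor is shellable, so Proposition \ref{join} yields a shelling $S(Block(j))$. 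The crux is to glue these together: I would take $S(C_{n,0}^{1}):=\order_{j=n}^{2}S(Block(j))$ and verify the criterion of Proposition \ref{Shelling}. Given $T\in Block(k)$ preceded by some $S$: if $S$ lies in the same block we are done by the block shelling, and otherwise $S\in Block(j)$ with $j>k$, in which case the arc $<k,1>\in T$ is incompatible with $<1,j>\in S$, so $T\cap S\subseteq T\cap\mu_{<k,1>}(T)$, and flipping $<k,1>$ moves $T$ into a block of strictly larger index, which occurs earlier. By symmetry this shells $T(C_{n,0}^{i})$ for every $i$; call it $S(C_{n,0}^{i})$.

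Finally, since every triangulation of $C_{n,0}$ contains a unique loop (the innermost arc cobounding an annulus with the empty boundary component), the strata $T(C_{n,0}^{i})$ partition $T(C_{n,0})$, and I would take $S(Arc(C_{n,0})):=\order_{i=1}^{n}S(C_{n,0}^{i})$. To verify this, given $T\in S(C_{n,0}^{k})$ preceded by $S$, one may assume by the previous step that $S\in S(C_{n,0}^{j})$ with $j<k$; writing $(k,k,x)$ for the triangle of $T$ incident to its loop, there are two cases. When $x\in[j,k-1]$, flipping the loop at $k$ moves $T$ into the stratum $T(C_{n,0}^{x})$, which precedes $T(C_{n,0}^{k})$, and the loop at $k$ cannot appear in $S$, so $T\cap S$ lies in that facet. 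When $x\in[k+1,j-1]$, the arc $<x,k>\in T$ is incompatible with the loop at $j$ in $S$, and the particular block order fixed above guarantees that $\mu_{<x,k>}(T)$ precedes $T$ within $S(C_{n,0}^{k})$.

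I expect the main obstacle to be bookkeeping rather than any single idea: one must fix orientation conventions for the arcs $<i,j>$ consistently (as in Figure \ref{fig:pictureofarcs}) so that, in each case above, the repairing flip genuinely lands in an already-listed triangulation, and one must confirm the decomposition $Block(j)\equiv T(C_{m_1,0}^{\bullet})\times T(C_{m_2,0}^{\bullet})$ that drives the induction. Each individual compatibility check is routine once the conventions are set; the real work is choosing the two nested orderings — decreasing in the block index $j$, then increasing in the loop vertex $i$ — so that the global concatenation satisfies Proposition \ref{Shelling}.
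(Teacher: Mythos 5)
Your proposal is correct and follows essentially the same route as the paper: the same stratification of $T(C_{n,0}^{1})$ into blocks by the triangle $(1,1,j)$, the same decreasing order on blocks within each stratum, the same increasing concatenation over loop vertices, and the same two-case flip argument in the final gluing. The only minor addition is your explicit justification that every triangulation of $C_{n,0}$ contains a unique loop (so the $T(C_{n,0}^{i})$ partition $T(C_{n,0})$), which the paper leaves implicit.
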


\begin{cor}
\label{n-gon}

$Arc(\text{$n$-gon})$ is shellable for $n\geq 3$

\begin{proof}

Follows immediately from Claim \ref{Claim 1}.

\end{proof}
\end{cor}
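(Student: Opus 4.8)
$Arc(\text{$n$-gon})$ is shellable for $n\geq 3$.

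The plan is to obtain this exactly as indicated, namely as a direct consequence of the shelling of $T(C_{n,0}^1)$ built in Claim \ref{Claim 1}, so what really needs to be spelled out is the bridge between the two statements. The key point is the identification — already invoked at the start of the proof of Proposition \ref{shellingcn} — of the induced complex $\Delta_{T(C_{m,0}^1)}$ with the arc complex of a polygon. Cutting $C_{m,0}$ open along the loop $\ell$ based at vertex $1$ decomposes it into an $(m+1)$-gon, whose sides are the $m$ marked boundary segments of the cylinder together with the two new sides produced by the cut, and a copy of $C_{1,0}$, which carries no arc other than its own loop. Since by definition every triangulation in $T(C_{m,0}^1)$ contains $\ell$ and, after the cut, the loop of the $C_{1,0}$-piece, both of these arcs are cone points of $\Delta_{T(C_{m,0}^1)}$; deleting them leaves precisely $Arc((m+1)\text{-gon})$. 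Hence $T(C_{m,0}^1) \equiv Arc((m+1)\text{-gon})$ in the sense of the equivalence defined above.

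Granting this, the argument is immediate. Claim \ref{Claim 1} produces a shelling of $T(C_{m,0}^1)$ for every $m \geq 1$, so by the Proposition stating that equivalent sets of triangulations are simultaneously shellable (a special case of Proposition \ref{join}), $Arc((m+1)\text{-gon})$ is shellable for all $m \geq 2$ — this being the range in which the cut-along-$\ell$ picture is an honest $(m+1)$-gon. Re-indexing with $n = m+1$ then yields shellability of $Arc(n\text{-gon})$ for all $n \geq 3$, which is the claimed range (and the honest one, since the digon is excluded from the setup).

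There is no genuine obstacle here: the corollary is a bookkeeping consequence of Claim \ref{Claim 1}, and all the combinatorial content lives in that claim. The only step deserving a moment's care is the topological identification in the first paragraph — recognising the $(m+1)$-gon obtained by cutting along the loop, and verifying that the residual $C_{1,0}$ factor together with the loop $\ell$ contribute only cone points, so that passing from $T(C_{m,0}^1)$ to $Arc((m+1)\text{-gon})$ loses no information relevant to shellability.
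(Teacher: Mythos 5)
Your proposal is correct and takes essentially the same route as the paper: the corollary is obtained from Claim~\ref{Claim 1} by observing that cutting $C_{m,0}$ along the loop at vertex~$1$ identifies $T(C_{m,0}^1)$ with $Arc((m+1)\text{-gon})$ up to coning, so shellability transfers via Proposition~\ref{join}. (One tiny quibble: the residual $C_{1,0}$ piece in fact carries no arcs at all, so only the loop $\ell$ itself is a cone vertex, but this is immaterial to the argument.)
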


Applying Theorem \ref{Ziegler} we rediscover the classical result of Harer, \cite{harer2}.

\begin{cor}

$Arc(C_{n,0})$ and $Arc(\text{$n$-gon})$ are PL-spheres of dimension $n-2$ and $n-4$, respectively.

\end{cor}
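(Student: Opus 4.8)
The statement is an immediate corollary of Theorem \ref{Ziegler} combined with the shellability just proved, so the plan is only to check that the hypotheses of Theorem \ref{Ziegler} hold and then to read off the two dimensions. First I would note that $Arc(C_{n,0})$ is shellable by Proposition \ref{shellingcn} and that $Arc(\text{$n$-gon})$ is shellable by Corollary \ref{n-gon}. Since the $n$-gon and the cylinder $C_{n,0}$ are orientable, neither surface carries a one-sided simple closed curve, so in both cases $Arc$ coincides with $A^{\otimes}$; hence the proposition asserting that $A^{\otimes}(S,M)$ is a pseudo-manifold applies verbatim and tells us that each of these complexes is pure and that every face of codimension one lies in exactly two maximal faces, i.e. that each is a pseudo-manifold without boundary. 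Theorem \ref{Ziegler} then yields at once that $Arc(C_{n,0})$ and $Arc(\text{$n$-gon})$ are PL-spheres, of dimension one less than the number of arcs in a triangulation.

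It remains to compute those two arc counts, and for this I would invoke Corollary \ref{rank}: the number of arcs in a triangulation is an invariant of the surface, so it suffices to evaluate it on one convenient triangulation. For the $n$-gon this is the classical fact that every triangulation of a convex $n$-gon uses exactly $n-3$ diagonals, so the maximal simplices of $Arc(\text{$n$-gon})$ have dimension $n-4$. For $C_{n,0}$ I would reuse the cut already exploited in the proof of Proposition \ref{shellingcn}: a triangulation of $C_{n,0}$ necessarily contains a loop $\ell$ at some vertex which, together with the unmarked boundary component, bounds an annulus carrying no further compatible arc, while on the other side of $\ell$ one has an $(n+1)$-gon triangulated by $n-2$ diagonals; hence a triangulation of $C_{n,0}$ consists of $\ell$ together with $n-2$ further arcs, that is $n-1$ arcs in all, and the maximal simplices of $Arc(C_{n,0})$ have dimension $n-2$.

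Putting these together, $Arc(C_{n,0})$ is a shellable pseudo-manifold without boundary of dimension $n-2$ and $Arc(\text{$n$-gon})$ is one of dimension $n-4$, so Theorem \ref{Ziegler} gives the claimed conclusion. I do not anticipate any genuine difficulty; the only points needing a little care are the arc count for $C_{n,0}$ — in particular the verification that the annular region next to the unmarked boundary contributes nothing new beyond the enclosing loop — and the book-keeping for the smallest values of $n$, where the sphere has negative dimension (for instance $Arc(\text{$3$-gon})$, which is the void complex, a $(-1)$-sphere).
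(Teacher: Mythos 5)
Your proposal is correct and takes the same route as the paper: the corollary follows by applying Theorem \ref{Ziegler} to the shellable pseudo-manifolds $Arc(C_{n,0})$ and $Arc(\text{$n$-gon})$. The only thing you add beyond what the paper records is the explicit arc-count giving the dimensions ($n-1$ arcs in a triangulation of $C_{n,0}$ and $n-3$ diagonals in a triangulation of the $n$-gon), which is routine and correct.
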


\section{Main Theorem}
\label{main section}

In Section \ref{shellability} we achieved shellability of a complex by grouping facets into blocks and finding a `shelling order' in terms of these blocks. The task was then simplified to finding a shelling of the blocks themselves. Here we essentially follow the same strategy, twice. However, on the second iteration of the process we require a specific shelling of the blocks - in general an arbitrary shelling would not suffice.

\begin{defn}

Let $T(\textnormal{M}_n^\circ) \subseteq T(\textnormal{M}_n)$ consist of all triangulations of $\textnormal{M}_n$ (i.e no quasi-triangulations containing a one-sided curve).

\end{defn}

\begin{defn}

Let $\gamma$ be an arc in $T \in T(\textnormal{M}_n^\circ)$. Call $\gamma$ a \textit{\textbf{crosscap arc}} (c-arc) if $\textnormal{M}_n\setminus\{\gamma\}$ is orientable. (Informally, a c-arc is an arc that necessarily passes through the crosscap). Let $(i,j)$ denote a c-arc with endpoints $i$ and $j$.
\end{defn}. 

\begin{defn}

Call a triangulation $T \in T(\textnormal{M}_n^\circ)$ a \textit{\textbf{crosscap triangulation}} (c-triangulation) if every arc in $T$ is a c-arc. Let $T(\textnormal{M}_n^\otimes) \subseteq T(\textnormal{M}_n^\circ)$ consist of all c-triangulations.
\end{defn}

\begin{defn}

Let $\gamma$ be an arc in $T \in T(\textnormal{M}_n^\circ)$ that is not a c-arc. Call $\gamma$ a \textit{\textbf{bounding arc}} (b-arc) if it mutates to a c-arc.

\end{defn}

\begin{figure}[H]
\centering
\includegraphics[width=120mm]{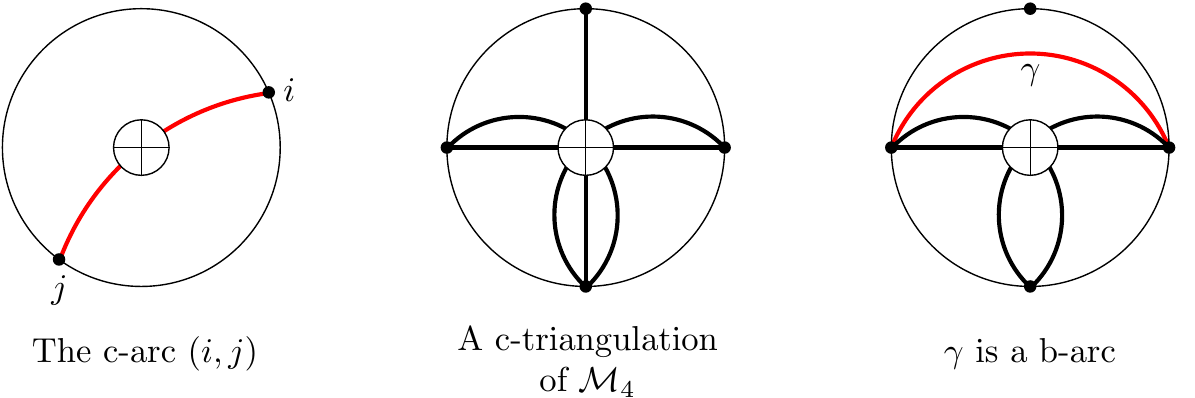}
\caption{}
\end{figure}

\subsection{Reducing the problem to c-triangulations.}

\begin{lem}
\label{half}
If $T(\textnormal{M}_n^\otimes)$ is shellable then so is $T(\textnormal{M}_n^\circ)$.

\begin{proof}

Consider $I := \{i_1, \ldots, i_k\} \subseteq [1,n]$. Let $\Gamma_{I}^{(k)}$ consist of all triangulations $T \in T(\textnormal{M}_n^\circ)$ such that there is a c-arc in $T$ with endpoint $j$ \textit{if and only if} $j \in I$. Note that this condition implies the existence of an arc or boundary segment $<i_m,i_{m+1}>$ (where $i_{k+1}:= i_1$) in every triangulation $T \in \Gamma_{I}^{(k)}$ $\forall m \in [1,k]$.

\begin{figure}[H]
\centering
\includegraphics[width=110mm]{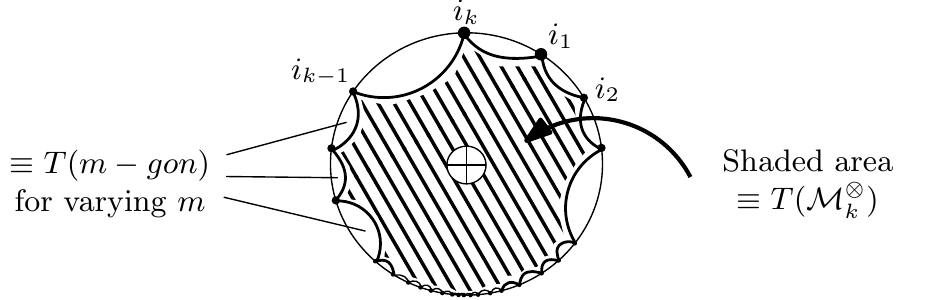}
\caption{$\Gamma_I^{(k)}$}
\end{figure}

By assumption $T(\textnormal{M}_n^\otimes)$ is shellable, and by Corollary \ref{n-gon} $T(\text{$m$-gon})$ is also shellable. Hence $\Gamma_{I}^{(k)}$ is the product of shellable collections of triangulations, and so is shellable by Proposition \ref{join}. Denote this shelling by $S(\Gamma_{I}^{(k)})$.

\begin{claim}
\label{Claim 3}

Let $Block(k) := \displaystyle\order_{I \in [1,n]^{(k)}}S(\Gamma_{I}^{(k)})$. Then $\displaystyle\order_{k=n}^{1} Block(k)$ is a shelling for $T(\textnormal{M}_n^\circ)$. \\

\begin{PC}

Let $S$ precede $T$ in the ordering. Then $S \in Block(j)$ and $T \in Block(k)$ where $j \geq k$. In particular, $T \in S(\Gamma_{I_1}^{(k)})$ and $S \in S(\Gamma_{I_2}^{(j)})$ for some $I_1,I_2 \in \mathcal{P}([1,n])$ where $|I_1| \leq |I_2|$. Since $S(\Gamma_{I}^{(k)})$ is a shelling we may assume $I_1 \neq I_2$. \\

Suppose that every b-arc in $T$ is also an arc in $S$. Then $I_2 \subseteq I_1$, and since $|I_1| \leq |I_2|$ this implies $I_1 = I_2$.
So we may assume there is at least one b-arc $\gamma \in T$ that is not an arc in $S$. Since $\gamma \notin S$, $T\cap S \subseteq T\cap \mu_{\gamma}(T)$. Moreover, since $\gamma$ is a b-arc, $\mu_{\gamma}(T) \in Block(k+1)$. Hence $\mu_{\gamma}(T)$ precedes $T$ in the ordering, see Figure \ref{fig:claim3}. \\

\begin{figure}[H]
\centering
\includegraphics[width=140mm]{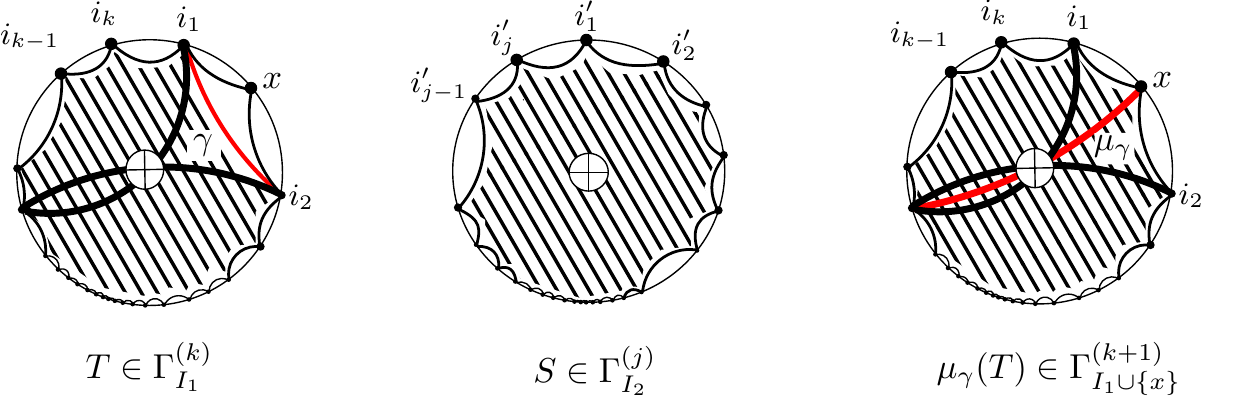}
\caption{}
\label{fig:claim3}
\end{figure}

\hfill \textit{End of proof of Claim \ref{Claim 3}.}
\end{PC}

\end{claim}

\end{proof}

\end{lem}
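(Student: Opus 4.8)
\textbf{Proof proposal for Lemma \ref{half}.}

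The plan is to reduce the shellability of $T(\textnormal{M}_n^\circ)$ to that of $T(\textnormal{M}_n^\otimes)$ by the same block strategy used in Proposition \ref{shellingcn}, where the role of the ``number of $i$'s seen by loops'' is now played by the set of endpoints of c-arcs. First I would stratify $T(\textnormal{M}_n^\circ)$: for each nonempty $I \subseteq [1,n]$ define $\Gamma_I^{(k)}$ ($k = |I|$) to be the triangulations whose c-arcs have endpoint set exactly $I$. Every such triangulation contains, for each cyclically consecutive pair $i_m, i_{m+1}$ of $I$, an arc or boundary segment cutting off a polygonal region, and the complementary piece containing all the crosscap structure is (a surface equivalent to) $\textnormal{M}_k$ triangulated only by c-arcs. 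Hence $\Gamma_I^{(k)}$ is equivalent to a product of $k$ polygon arc complexes and one copy of $T(\textnormal{M}_k^\otimes)$; by Corollary \ref{n-gon} and the hypothesis, each factor is shellable, so Proposition \ref{join} gives a shelling $S(\Gamma_I^{(k)})$.

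Next I would assemble these: group the strata into $Block(k) := \order_{I \in [1,n]^{(k)}} S(\Gamma_I^{(k)})$ and order the blocks in \emph{decreasing} $k$, i.e. $\order_{k=n}^{1} Block(k)$. I claim this is a shelling of $T(\textnormal{M}_n^\circ)$, and verify it via Proposition \ref{Shelling}: given $S$ preceding $T$, with $T \in S(\Gamma_{I_1}^{(k)})$ and $S \in S(\Gamma_{I_2}^{(j)})$, $j \geq k$ so $|I_1| \leq |I_2|$; if $I_1 = I_2$ we are done since $S(\Gamma_{I_1}^{(k)})$ is itself a shelling, so assume $I_1 \neq I_2$. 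The key dichotomy: either every b-arc of $T$ already lies in $S$ — which forces $I_2 \subseteq I_1$, hence (with the cardinality inequality) $I_1 = I_2$, contradiction — or there is a b-arc $\gamma \in T \setminus S$. In the latter case $T \cap S \subseteq T \cap \mu_\gamma(T)$ automatically, and because $\gamma$ is a b-arc its flip is a c-arc, so $\mu_\gamma(T)$ has one more c-arc endpoint, landing it in $Block(k+1)$, which precedes $Block(k)$; this is exactly the earlier-facet condition.

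The main obstacle is the structural claim underlying the product decomposition: that fixing the endpoint set $I$ of the c-arcs really does split a triangulation in $\Gamma_I^{(k)}$ into independent polygon pieces glued along the arcs $<i_m, i_{m+1}>$ to a central $\textnormal{M}_k$-region carrying only c-arcs, with no hidden compatibility constraints between the pieces. One must check that the arcs/segments $<i_m,i_{m+1}>$ are indeed present and compatible in every member of $\Gamma_I^{(k)}$, that the central region is genuinely a marked M\"obius strip on the $k$ points of $I$ with its c-triangulations in bijection with $T(\textnormal{M}_k^\otimes)$, and — for the claim — that the flip of a b-arc cannot change the c-arc endpoint set by more than adding a single new endpoint (so $\mu_\gamma(T)\in Block(k+1)$ precisely, not some other block). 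The rest — invoking Propositions \ref{join} and \ref{Shelling} — is bookkeeping.
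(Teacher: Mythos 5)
Your proposal is correct and takes essentially the same route as the paper: stratify $T(\textnormal{M}_n^\circ)$ by the endpoint set $I$ of the c-arcs, shell each $\Gamma_I^{(k)}$ via the product decomposition into polygon pieces and a central crosscap region, order the blocks by decreasing $|I|$, and use the dichotomy ``all b-arcs of $T$ lie in $S$'' (forcing $I_1=I_2$) versus ``some b-arc $\gamma\in T\setminus S$'' (giving a preceding facet via $\mu_\gamma(T)$). Your flagged checks are the right ones, and in fact are slightly more careful than the paper's wording: the central region is $T(\textnormal{M}_k^\otimes)$ for $k=|I|$ rather than $T(\textnormal{M}_n^\otimes)$, so the hypothesis is really used for all ranks $\le n$; and the b-arc flip does land in a strictly earlier block, since a b-arc always has both endpoints in $I_1$ and its flip is a c-arc whose new endpoint lies in a polygon region, hence outside $I_1$.
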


The idea behind Lemma \ref{half} is that we are decomposing $T(\text{M}^{\circ}_n)$ into blocks, and ordering these blocks. The ordering is chosen in such a way that if we manage to individually shell the blocks themselves, we'll have a shelling of $T(\text{M}^{\circ}_n)$. Figure \ref{fig:colouredM3} shows the block structure of $T(\text{M}^{\circ}_3)$.

In particular, we realise that to shell a block it is sufficient to find a shelling of $T(\textnormal{M}_n^\otimes)$. We will split this into two cases: $n$ even and $n$ odd.

\begin{figure}[H]
\centering
\includegraphics[width=140mm]{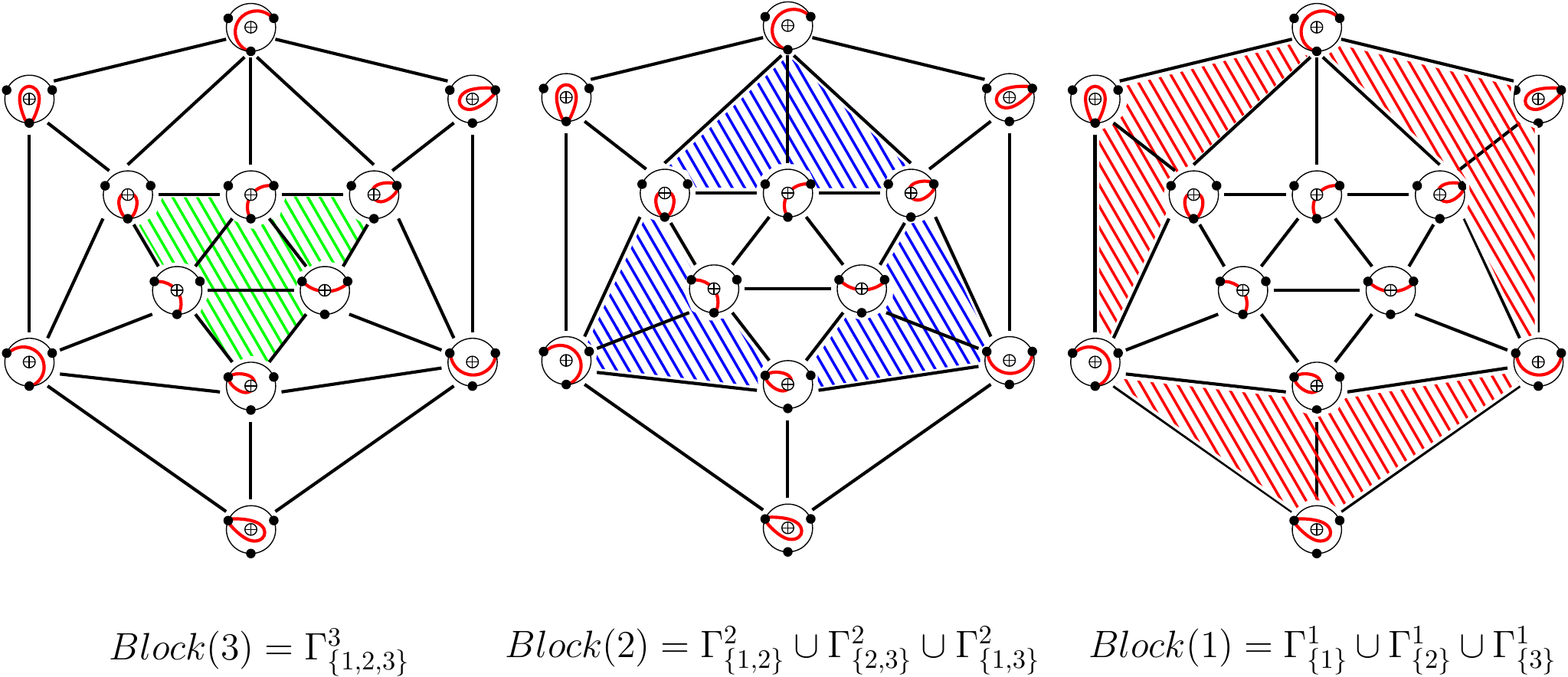}
\caption{Block structure of $T(\text{M}^{\circ}_3)$}
\label{fig:colouredM3}
\end{figure}

\subsection{Shellability of $T(\textnormal{M}_n^{\otimes})$ for even $n$.}

Let $D_{\{(1,\frac{n}{2}+1)\}}^n$ consist of all triangulations of $T(\textnormal{M}_n^{\otimes})$ containing the c-arc $(1, \frac{n}{2}+1)$ but containing no other c-arcs $(i,\frac{n}{2}+i)$ $\forall i \in [2,n]$. See Figure \ref{fig:onediagonal}.

\begin{figure}[H]
\centering
\includegraphics[width=120mm]{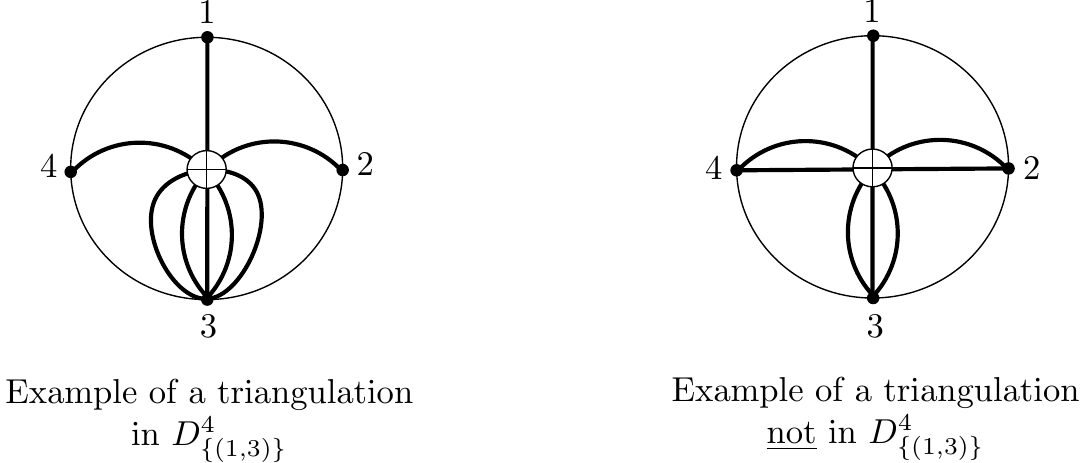}
\caption{}
\label{fig:onediagonal}
\end{figure}

\begin{defn}

Let $T \in D_{\{(1,\frac{n}{2}+1)\}}^n$ and $\gamma$ a c-arc in $T$. $\gamma = (i,j)$ for some $i \in [1, 1+\frac{n}{2}]$ and $j \in [1+\frac{n}{2},1]$. Define the \textit{\textbf{length}} of $\gamma$ as follows: 

\begin{itemize}

\item If $i=j=1$, $l(\gamma) := n+1$.
\item Otherwise, $l(\gamma):= |[i,j]|$.

\end{itemize}

\begin{figure}[H]
\centering
\captionsetup{justification=centering,margin=2cm}
\includegraphics[width=35mm]{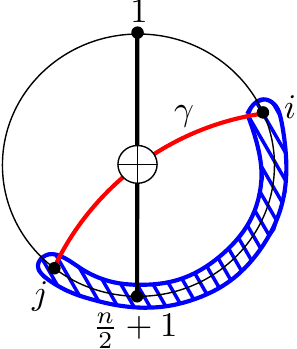}
\caption{If $i \neq 1$ or $j \neq 1$ then the number of marked points in the shaded tube equals $l(\gamma)$.}

\end{figure}

\end{defn}

\begin{defn}

Let $\mathcal{X}_1^n$ be the partial triangulation of $\textnormal{M}_n$ consisting of the c-arcs $(1, \frac{n}{2}+1), (2,\frac{n}{2}+1), (n,\frac{n}{2}+1)$. Additionally, let $T(\mathcal{X}_1^n)$ denote the triangulations in $D_{\{(1,\frac{n}{2}+1)\}}^n$ containing the c-arcs $(1, \frac{n}{2}+1), (2,\frac{n}{2}+1), (n,\frac{n}{2}+1)$. \\
Similarly, let $\mathcal{X}_2^n$ be the partial triangulation of $\textnormal{M}_n$ consisting of the c-arcs $(1, \frac{n}{2}+1), (1,\frac{n}{2}), (n,\frac{n}{2}+2)$. Let $T(\mathcal{X}_1^n)$ denote the triangulations in $D_{\{(1,\frac{n}{2}+1)\}}^n$ containing the c-arcs $(1, \frac{n}{2}+1), (2,\frac{n}{2}+1), (n,\frac{n}{2}+1)$. See Figure \ref{fig:typex}. \\

\begin{figure}[H]
\centering
\includegraphics[width=90mm]{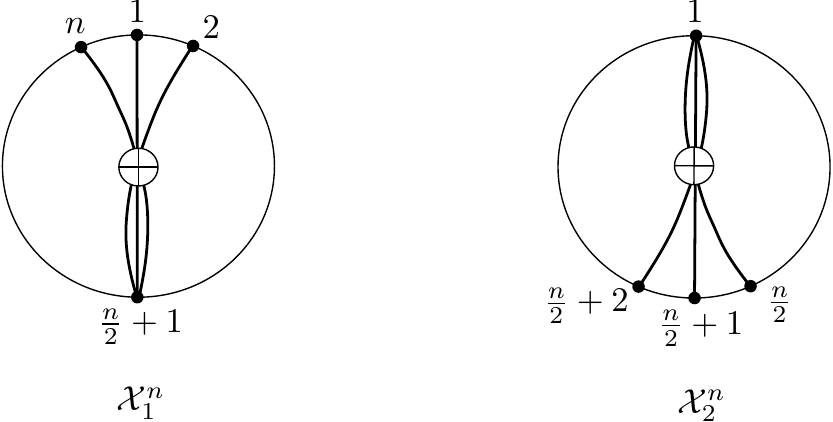}
\caption{}
\label{fig:typex}
\end{figure}

\end{defn}

\begin{lem}
\label{type}
$D_{\{(1,\frac{n}{2}+1)\}}^n = T(\mathcal{X}_1^n) \bigsqcup T(\mathcal{X}_2^n)$. Moreover, for any c-arc $\gamma \neq (1, \frac{n}{2}+1)$ in $T$ we have the following:

\begin{itemize} 
\item $l(\gamma) \leq \frac{n}{2}$ if $T \in T(\mathcal{X}_1^n)$.
\item $l(\gamma) \geq \frac{n}{2}+2$ if $T \in T(\mathcal{X}_2^n)$.

\end{itemize} 

\begin{proof}

A triangulation $T$ in $D_{\{(1,\frac{n}{2}+1)\}}^n$ will contain either the c-arc $(2,\frac{n}{2}+1)$ or the c-arc $(1,\frac{n}{2}+2)$.\\

Assume the c-arc $(2,\frac{n}{2}+1)$ is in $T$. We will show, by induction on $i$, the c-arc of maximal length in $T$ with endpoint $i \in [2,\frac{n}{2}+1]$ must be the c-arc $(i,x)$ where $x \in [\frac{n}{2}+1,\frac{n}{2}+i-1]$. \\

Let $\gamma$ be the c-arc in $T$ of maximal length with endpoint $2$. Let $j$ be the other endpoint of $\gamma$ and suppose for a contradiction $j \in [\frac{n}{2}+2,n]$. Since $(2,\frac{n}{2}+1) \in T$ then, as T is a c-triangulation, $(2, x) \in T$ $\forall x \in [\frac{n}{2}+1,j]$. In particular $\beta := (2,\frac{n}{2}+2) \in T$ - which contradicts $T \in D_{\{(1,\frac{n}{2}+1)\}}^n$. See Figure \ref{fig:inductioni2}.\\

\begin{figure}[H]
\centering
\includegraphics[width=35mm]{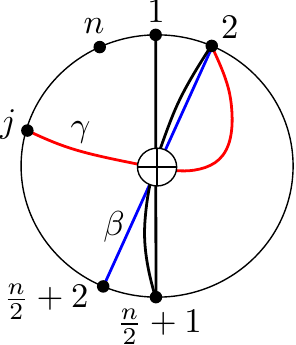}
\caption{}
\label{fig:inductioni2}
\end{figure}

By induction, the c-arc $\alpha$ of maximal length in $T$ with endpoint $i-1$ is the c-arc $(i-1, x)$ where $x \in [\frac{n}{2}+1,\frac{n}{2}+i-2]$. Let $\gamma$ be the c-arc in $T$ of maximal length with endpoint $i$. Let $j$ be the other endpoint of $\gamma$ and suppose $j \in [\frac{n}{2}+i,n]$. But by the maximality of $\alpha$ there will be a c-arc $(i,y)$ $\forall y \in [x,j]$. In particular there will be a c-arc $\beta := (i,\frac{n}{2}+i)$. See Figure \ref{fig:maxlength}.\\

\begin{figure}[H]
\centering
\includegraphics[width=55mm]{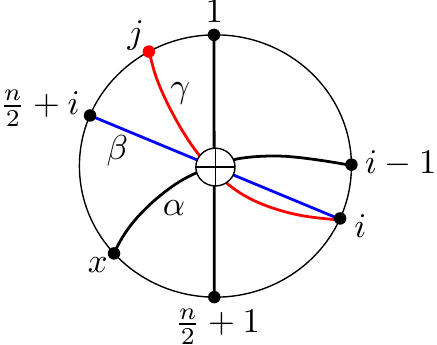}
\caption{}
\label{fig:maxlength}
\end{figure}

If we supposed $(1,\frac{n}{2}+2)$ was an arc in $T$, then an analogous argument shows that $T \in T(\mathcal{X}_2)$.

\end{proof}

\begin{cor}
\label{intersection}

Let $S \in T(\mathcal{X}_1^n)$ and $T \in T(\mathcal{X}_2^n)$ then $S\cap T = \{(1, \frac{n}{2}+1)\}$

\begin{proof}

It follows from the fact that, excluding the c-arc $(1, \frac{n}{2}+1)$, the maximal length of any c-arc in $\mathcal{X}_1^n$ is less than or equal to $ \frac{n}{2}$, and the minimal length of any c-arc in $\mathcal{X}_2^n$ is greater than or equal to $ \frac{n}{2}+2$.

\end{proof}
\end{cor}

\begin{cor} \mbox{}
\label{tmax}
The triangulation $T_{\text{max}}$ in Figure \ref{fig:tmax} is the unique triangulation in $T(\mathcal{X}_1^n)$ such that $\sum\limits_{\gamma \in T_{\text{max}}} l(\gamma)$ is maximal. 
The triangulation $T_{\text{min}}$ is the unique triangulation in $T(\mathcal{X}_2^n)$ such that $\sum\limits_{\gamma \in T_{\text{max}}} l(\gamma)$ is minimal. More explicitly, \newline
\noindent $T_{\text{max}} := \{(1, \frac{n}{2}+1)\} \cup \{(i,\frac{n}{2}+i-1)| i \in [2,\frac{n}{2}+1]\} \cup \{(i,\frac{n}{2}+i-2)| i \in [3,\frac{n}{2}+1]\}$.

\noindent $T_{\text{min}} :=  \{(1, \frac{n}{2}+1)\} \cup \{(i,\frac{n}{2}+i+1)| i \in [1,\frac{n}{2}]\} \cup \{(i,\frac{n}{2}+i+2)| i \in [1,\frac{n}{2}-1]\}$.
\begin{proof}

Consider the partial triangulation $\mathcal{P}$ of $\mathcal{X}_1^n$ consisting of all the c-arcs of maximal length. Namely the c-arcs $(i,\frac{n}{2}+i-1)$ $\forall i \in [2,\frac{n}{2}+1]$. $\mathcal{P}$ cuts $\textnormal{M}_n$ into ($2$ triangles and) quadrilaterals bounded by the two boundary segments $[i,i+1],[\frac{n}{2}+i-1,\frac{n}{2}+i]$ and the two c-arcs $(i,\frac{n}{2}+i-1), (i+1,\frac{n}{2}+i)$ $\forall i \in [3,\frac{n}{2}]$. Let $T$ be a triangulation of $\mathcal{P}$ such that $T \in T(\mathcal{X}_1^n)$. Notice that $(i,\frac{n}{2}+i) \notin T$ by definition of $D_{\{(1,\frac{n}{2}+1)\}}^n$, hence $(i+1,\frac{n}{2}+i-1) \in T$ $\forall i \in [3,\frac{n}{2}+1]\}$ and so $T = T_{\text{max}}$. Moreover, since $l(i+1,\frac{n}{2}+i-1) = l(i,\frac{n}{2}+i-1) - 1$ then $T$ is the unique triangulation in $T(\mathcal{X}_1^n)$ such that $\sum\limits_{\gamma \in T} l(\gamma)$ is maximal. \\

\noindent Analogously we get the result regarding unique minimality of $T_{\text{min}}$.

\begin{figure}[H]
\centering
\includegraphics[width=115mm]{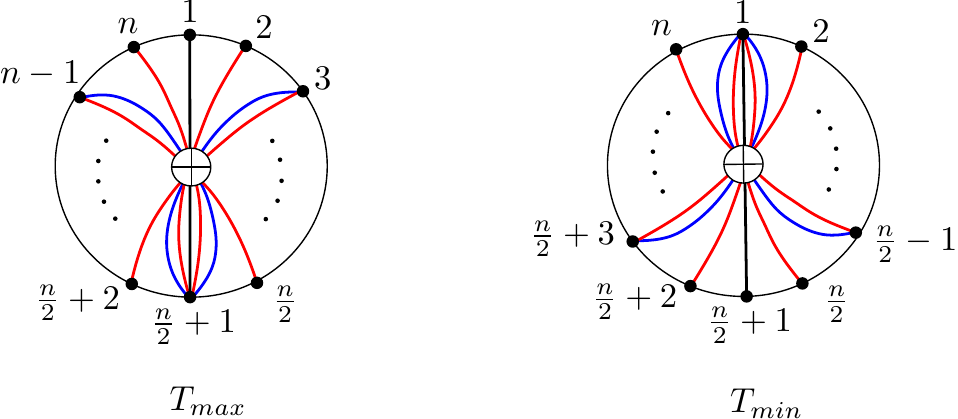}
\caption{}
\label{fig:tmax}
\end{figure}

\end{proof}

\end{cor}

\end{lem}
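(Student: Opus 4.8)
The plan is to prove the assertion about $T_{\text{max}}$ in detail; the assertion about $T_{\text{min}}$ then follows by the dual argument, replacing ``maximal length'' with ``minimal length'', $T(\mathcal{X}_1^n)$ with $T(\mathcal{X}_2^n)$, and the upper bound $l(\gamma)\le\frac{n}{2}$ of Lemma~\ref{type} with its lower bound $l(\gamma)\ge\frac{n}{2}+2$. So I work inside $T(\mathcal{X}_1^n)$. By Lemma~\ref{type} every c-arc $\gamma\neq(1,\frac{n}{2}+1)$ occurring in some $T\in T(\mathcal{X}_1^n)$ satisfies $l(\gamma)\le\frac{n}{2}$, and inspecting which c-arcs realise $l(\gamma)=\frac{n}{2}$ shows they are exactly the c-arcs $(i,\frac{n}{2}+i-1)$ with $i\in[2,\frac{n}{2}+1]$. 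Call this collection $\mathcal{P}$; it has $\frac{n}{2}$ elements, and $\mathcal{P}\cup\{(1,\frac{n}{2}+1)\}\supseteq\mathcal{X}_1^n$ since $(2,\frac{n}{2}+1)$ and $(n,\frac{n}{2}+1)$ are its $i=2$ and $i=\frac{n}{2}+1$ members.

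\emph{Structural step.} I would first show that $\mathcal{P}$ is a partial triangulation compatible with $(1,\frac{n}{2}+1)$ and that $\mathcal{P}\cup\{(1,\frac{n}{2}+1)\}$ cuts $\textnormal{M}_n$ into two triangles — one along $[1,2]$ bounded by $(1,\frac{n}{2}+1),(2,\frac{n}{2}+1)$, one along $[n,1]$ bounded by $(1,\frac{n}{2}+1),(n,\frac{n}{2}+1)$ — together with $\frac{n}{2}-1$ quadrilaterals $Q_i$, $i\in[2,\frac{n}{2}]$, where $Q_i$ is bounded by $[i,i+1]$, $[\frac{n}{2}+i-1,\frac{n}{2}+i]$, $(i,\frac{n}{2}+i-1)$ and $(i+1,\frac{n}{2}+i)$. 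The two diagonal c-arcs of $Q_i$ are $(i,\frac{n}{2}+i)$ and $(i+1,\frac{n}{2}+i-1)$. Now if $T\in T(\mathcal{X}_1^n)$ refines $\mathcal{P}$, then since $T\in D^n_{\{(1,\frac{n}{2}+1)\}}$ forbids the c-arc $(i,\frac{n}{2}+i)$ for every $i\neq 1$, $T$ must contain the other diagonal $(i+1,\frac{n}{2}+i-1)$ of each $Q_i$; reindexing, these are exactly the c-arcs $(i,\frac{n}{2}+i-2)$, $i\in[3,\frac{n}{2}+1]$, so $T$ coincides with the triangulation displayed for $T_{\text{max}}$. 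In particular $T_{\text{max}}$ is the unique member of $T(\mathcal{X}_1^n)$ containing all of $\mathcal{P}$, and every c-arc of $T_{\text{max}}$ outside $\mathcal{P}\cup\{(1,\frac{n}{2}+1)\}$ has length $\frac{n}{2}-1$ (via $l(i+1,\frac{n}{2}+i-1)=l(i,\frac{n}{2}+i-1)-1$).

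\emph{Optimality step.} To see that $T_{\text{max}}$ is the \emph{unique} maximiser, let $T\in T(\mathcal{X}_1^n)$ and set $p=|T\cap\mathcal{P}|$. Every triangulation of $\textnormal{M}_n$ has exactly $n$ arcs (the invariant of Corollary~\ref{rank}, computed by an Euler-characteristic count), one of which is $(1,\frac{n}{2}+1)$ with $l=\frac{n}{2}+1$; by Lemma~\ref{type} and the identification of $\mathcal{P}$, exactly $p$ of the remaining c-arcs have length $\frac{n}{2}$ and the other $n-1-p$ have length $\le\frac{n}{2}-1$. Hence
\[
\sum_{\gamma\in T}l(\gamma)\ \le\ \Bigl(\tfrac{n}{2}+1\Bigr)+p\cdot\tfrac{n}{2}+(n-1-p)\Bigl(\tfrac{n}{2}-1\Bigr),
\]
and the right-hand side is strictly increasing in $p$. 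For $T=T_{\text{max}}$ we have $p=\frac{n}{2}$ and equality (by the structural step), whereas for $T\neq T_{\text{max}}$ the structural step forces $p\le\frac{n}{2}-1$; subtracting gives $\sum_{\gamma\in T_{\text{max}}}l(\gamma)-\sum_{\gamma\in T}l(\gamma)\ge\frac{n}{2}-p\ge 1>0$. The dual computation for $T_{\text{min}}$ uses $l(\gamma)\ge\frac{n}{2}+2$: the minimal-length c-arcs form a partial triangulation $\mathcal{Q}$ whose unique refinement inside $T(\mathcal{X}_2^n)$ is $T_{\text{min}}$, and $\sum_{\gamma\in T}l(\gamma)\ge(\frac{n}{2}+1)+q(\frac{n}{2}+2)+(n-1-q)(\frac{n}{2}+3)$ with $q=|T\cap\mathcal{Q}|$ is strictly decreasing in $q$, so the minimum is attained only at $q=\frac{n}{2}$, i.e.\ only at $T_{\text{min}}$.

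The main obstacle is the structural step: correctly describing the complementary regions of $\mathcal{P}\cup\{(1,\frac{n}{2}+1)\}$, verifying that each $Q_i$ has precisely the two named diagonals and that exactly one of them — the ``antipodal'' c-arc $(i,\frac{n}{2}+i)$ — is of the kind excluded by $D^n_{\{(1,\frac{n}{2}+1)\}}$, and handling the small and degenerate cases (for instance $n=4$, where $Q_2$ degenerates, its forced diagonal is the loop $(3,3)$ of length $\frac{n}{2}-1=1$, and $T(\mathcal{X}_1^n)$ is a single triangulation). This also requires care with the non-standard way c-arcs through the crosscap intersect, in order to confirm that $\mathcal{P}$ (and $\mathcal{Q}$) is genuinely a mutually compatible family; once the region picture is pinned down, the length bookkeeping above is routine.
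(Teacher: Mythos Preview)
Your proof is correct and follows essentially the same approach as the paper: both identify the partial triangulation $\mathcal{P}$ of max arcs, describe its complementary quadrilaterals $Q_i$, and observe that the forbidden diagonal $(i,\frac{n}{2}+i)$ forces the other diagonal in each $Q_i$, yielding $T_{\text{max}}$ as the unique refinement of $\mathcal{P}$ inside $T(\mathcal{X}_1^n)$. Where you differ is in the optimality step: the paper simply notes $l(i+1,\frac{n}{2}+i-1)=l(i,\frac{n}{2}+i-1)-1$ and declares $T_{\text{max}}$ the unique maximiser, whereas you make the argument explicit via the bound $\sum l(\gamma)\le(\frac{n}{2}+1)+p\cdot\frac{n}{2}+(n-1-p)(\frac{n}{2}-1)$ in terms of $p=|T\cap\mathcal{P}|$, together with the structural step forcing $p<\frac{n}{2}$ whenever $T\neq T_{\text{max}}$. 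This is a genuine improvement in rigour---the paper's one-line justification does not by itself rule out a triangulation with fewer max arcs but a larger sum---though the underlying idea is the same. Your attention to the degenerate case $n=4$ (where $Q_2$ has a repeated vertex and the forced diagonal is the loop $(3,3)$) is also something the paper glosses over.
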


\begin{defn}

Call a c-arc $(i,\frac{n}{2}+i)$ of $\textnormal{M}_n$ a \textit{\textbf{diagonal}} arc. 

\end{defn}

\begin{defn}

Consider a c-arc $\gamma$ in a triangulation of $\mathcal{X}_1^n$. If $l(\gamma) = \frac{n}{2}$ then call $\gamma$ a \textit{\textbf{max}} arc.

\end{defn}

\begin{defn}

Consider a c-arc $\gamma$ in a triangulation of $\mathcal{X}_2^n$. If $l(\gamma) = \frac{n}{2}+2$ then call $\gamma$ a \textit{\textbf{min}} arc.

\end{defn}

Consider a partial triangulation of $\mathcal{X}_1^n$ containing two max arcs. Cutting along these max arcs we will be left with two regions. Let $R$ be the region that doesn't contain the diagonal arc $(1,\frac{n}{2}+1)$. Note $R$ will contain $2k$ marked points for some $k \in \{2,\ldots, \frac{n}{2}\}$.

\begin{lem}
\label{R}
The set of triangulations of $R$ such that no max arcs occur in $R$ is equivalent to $T(\mathcal{X}_1^{2(k-1)})$.

\begin{proof}

Collapse the quadrilateral $(1,2, \frac{n}{2}+1,n)$ to a c-arc and relabel marked points as shown in Figure \ref{fig:collapse}.

\begin{figure}[H]
\centering
\includegraphics[width=140mm]{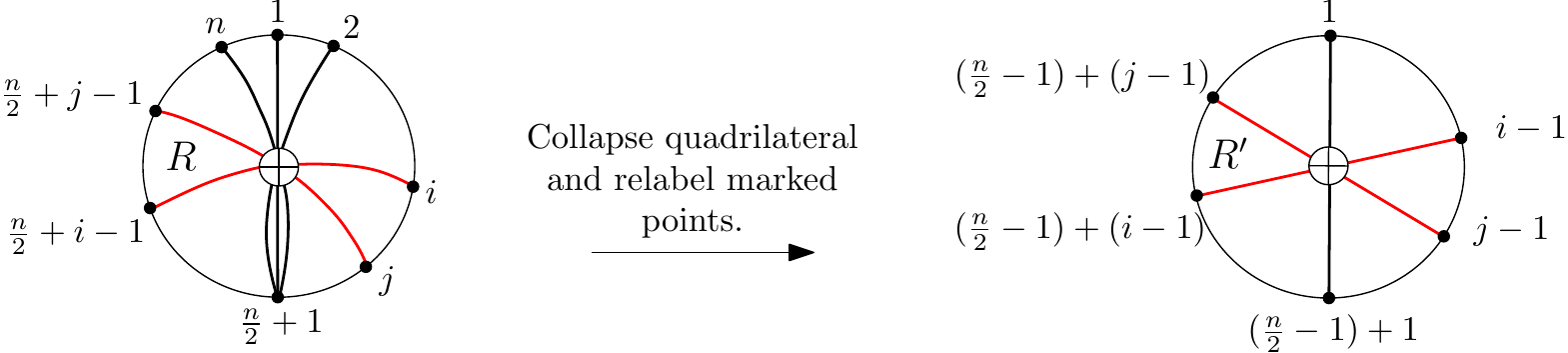}
\caption{}
\label{fig:collapse}
\end{figure}

Max arcs in $R$ correspond to diagonal arcs in $R'$. Furthermore, up to a relabelling of vertices, triangulating $R'$ so that no diagonal arcs occur in the triangulation is precisely triangulating $\mathcal{X}_1^{2(k-1)}$ so that no diagonal arcs occur.

\end{proof}

\end{lem}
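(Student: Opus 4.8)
The plan is to make the informal picture in Figure~\ref{fig:collapse} precise and then read off the claimed equivalence of simplicial complexes. Recall that $R$ is the region cut off by the two max arcs that does \emph{not} contain the diagonal $(1,\frac{n}{2}+1)$; by the length computation in Lemma~\ref{type} and the structure of $\mathcal{X}_1^n$, the two max arcs bounding $R$ are of the form $(a,\frac{n}{2}+a)$-neighbours, i.e.\ arcs $(p,p+\frac{n}{2}-1)$ and $(q,q+\frac{n}{2}-1)$ cutting out a sub-surface of $\textnormal{M}_n$ whose boundary consists of these two arcs together with two boundary segments of $\textnormal{M}_n$, and which contains exactly $2k$ marked points. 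The first step is to identify $R$ as a topological disk (an $(2k+2)$-gon once we count the two max arcs as edges), since both max arcs are ordinary arcs cutting $\textnormal{M}_n$ and $R$ lies on the orientable side away from the crosscap. I would record exactly which boundary segments and which marked points lie in $R$, so that the relabelling in Figure~\ref{fig:collapse} is pinned down.

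Next I would carry out the collapse explicitly: contract the quadrilateral with vertices $(1,2,\frac{n}{2}+1,n)$ — equivalently, collapse the c-arc $(1,\frac{n}{2}+1)$ and the two flanking max arcs $(2,\frac{n}{2}+1)$, $(n,\frac{n}{2}+1)$ down — to obtain a smaller M\"obius strip $\textnormal{M}_{2(k-1)}$ together with a fixed diagonal, which is precisely the configuration $\mathcal{X}_1^{2(k-1)}$ after relabelling the $2(k-1)$ remaining marked points cyclically. The key observation, which I would state as the crux of the argument, is the correspondence between arcs: a c-arc of $\textnormal{M}_n$ lying in $R$ becomes a c-arc of the collapsed $\textnormal{M}_{2(k-1)}$, a max arc in $R$ (length exactly $\frac{n}{2}$) becomes a diagonal arc (length $\frac{n}{2}$ of the \emph{new} strip, i.e.\ $(i,(k-1)+i)$) in the collapsed picture, and conversely. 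This is a bijection of ground sets, and one checks it sends compatible sets to compatible sets in both directions, since compatibility of arcs is detected by disjoint representatives and the collapse is a quotient by collapsing a disk-with-boundary-arcs which does not create or destroy intersections among the surviving arcs. Hence triangulations of $R$ in which no max arc of $\textnormal{M}_n$ appears correspond bijectively, and simplicial-complex-isomorphically, to triangulations of $\mathcal{X}_1^{2(k-1)}$ in which no diagonal appears — the latter being, by definition, $D_{\{(1,(k-1)+1)\}}^{2(k-1)}$ up to the cone arising from the arcs of $\mathcal{X}_1^{2(k-1)}$ held fixed, which is exactly the relation $\equiv$. Finally I would note that $R$ may also contain the two triangles adjacent to its two max arcs as forced faces; these contribute only cone points and are absorbed by $\equiv$.

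I expect the main obstacle to be the bookkeeping in the collapse step: making sure that after contracting the quadrilateral $(1,2,\frac{n}{2}+1,n)$ the residual surface is genuinely $\textnormal{M}_{2(k-1)}$ (non-orientability is preserved because the crosscap is untouched — it lives on the diagonal side, not in $R$) with the correct number $2(k-1)$ of marked points, and that the ``max'' condition of length $\frac{n}{2}$ in $\textnormal{M}_n$ translates under the relabelling to the ``diagonal'' condition of length $\frac{n}{2}-1 = (k-1) + \big(\tfrac{n}{2}-k\big)$... more care is needed here: the length of an arc is an intrinsic boundary-segment count, and collapsing the quadrilateral removes exactly one marked point's worth of boundary on each side, so a max arc of length $\frac n2$ maps to an arc of length $\frac n2 - 1$; one must then verify that in the smaller strip $\textnormal{M}_{2(k-1)}$ the \emph{diagonal} arcs are precisely those of length $(k-1)+1 = k$, and reconcile this — the resolution is that the relabelling is not the naive ``shift by one'' but the one in Figure~\ref{fig:collapse} chosen precisely so that these two notions coincide, and spelling that out is the delicate point. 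Once the relabelling dictionary is fixed, the isomorphism of complexes is immediate from the definitions of $D^{\bullet}_{\bullet}$, of $\mathcal{X}_1^{\bullet}$, and of $\equiv$.
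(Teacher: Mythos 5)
Your high-level plan — collapse the central quadrilateral, relabel, and argue that max arcs become diagonals — is the route the paper takes, so the strategy is right. However, several of the concrete claims made along the way are off in a way that would derail a careful write-up. First, collapsing the quadrilateral $(1,2,\frac{n}{2}+1,n)$ is not the same as ``collapsing the c-arc $(1,\frac{n}{2}+1)$'': that c-arc is the \emph{image} of the collapse (the quadrilateral is crushed down onto it); what gets identified to points are the boundary segments $[1,2]$ and $[n,1]$, and one cannot collapse a c-arc without wrecking the crosscap. Second, collapsing that quadrilateral removes exactly two marked points and yields $\textnormal{M}_{n-2}$, \emph{not} $\textnormal{M}_{2(k-1)}$; these agree only in the extreme case $k=\frac{n}{2}$. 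For smaller $k$ the image $R'$ of $R$ is a proper sub-region of $\textnormal{M}_{n-2}$ bounded by two of its diagonals, and identifying ``triangulations of $R'$ with no diagonals'' with $T(\mathcal{X}_1^{2(k-1)})$ requires the further contraction/relabelling that the paper's Figure \ref{fig:collapse} encodes — exactly the step you flag as ``the delicate point'' and then defer. Since the whole content of the lemma is precisely that dictionary, deferring it leaves the proof essentially unwritten.

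There is also a smaller but genuine misstep at the end: you identify ``triangulations of $\mathcal{X}_1^{2(k-1)}$ in which no diagonal appears'' with $D_{\{(1,k)\}}^{2(k-1)}$ up to cone. By Lemma \ref{type}, $D_{\{(1,k)\}}^{2(k-1)} = T(\mathcal{X}_1^{2(k-1)}) \sqcup T(\mathcal{X}_2^{2(k-1)})$, so the former is a \emph{proper} subset of the latter, not cone-equivalent to it (cones preserve the number of maximal simplices). The target of the lemma really is $T(\mathcal{X}_1^{2(k-1)})$ and not $D$. Finally, $R$ is a $2k$-gon, not a $(2k+2)$-gon: its $2k$ marked points already include the endpoints of the two bounding max arcs.
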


\begin{rmk}
\label{dyck remark}
Using induction we realise that Lemma \ref{R} tells us that $D_{\{(1,\frac{n}{2}+1)\}}^n$ has the same flip structure as the set of all Dyck paths of length $n-2$. In particular, triangulations in $D_{\{(1,\frac{n}{2}+1)\}}^n$  correspond to Dyck paths, and arcs appearing in those triangulations correspond to nodes in the Dyck lattice. This correspondence is indicated in Figure \ref{fig:dyck} and is best viewed in colour.

\end{rmk}

\begin{sidewaysfigure}
\begin{subfigure}{0.55\hsize}\centering
    \includegraphics[width=0.9\hsize]{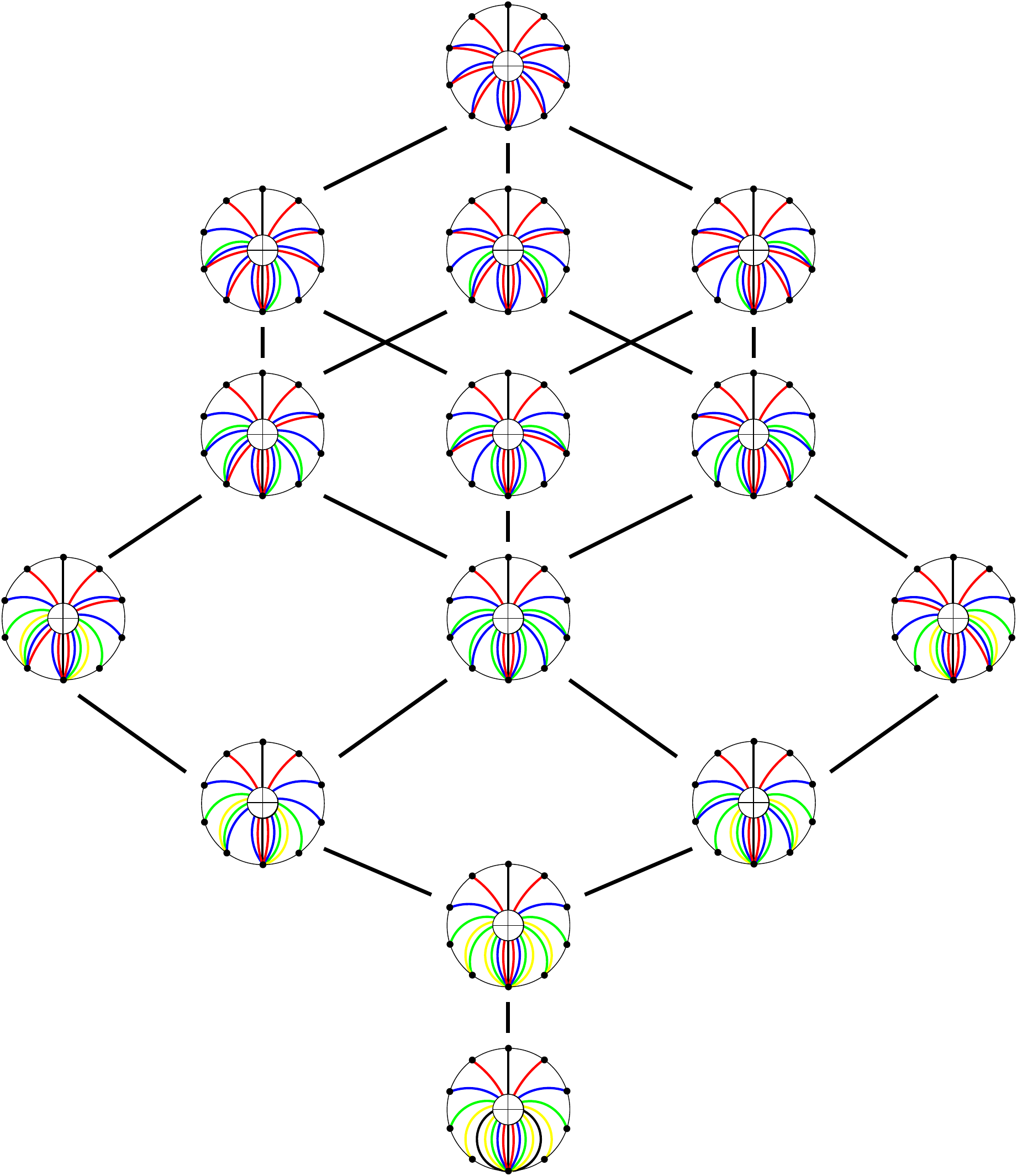}
\end{subfigure}%
\hfill
\begin{subfigure}{0.55\hsize}\centering
    \includegraphics[width=0.9\hsize]{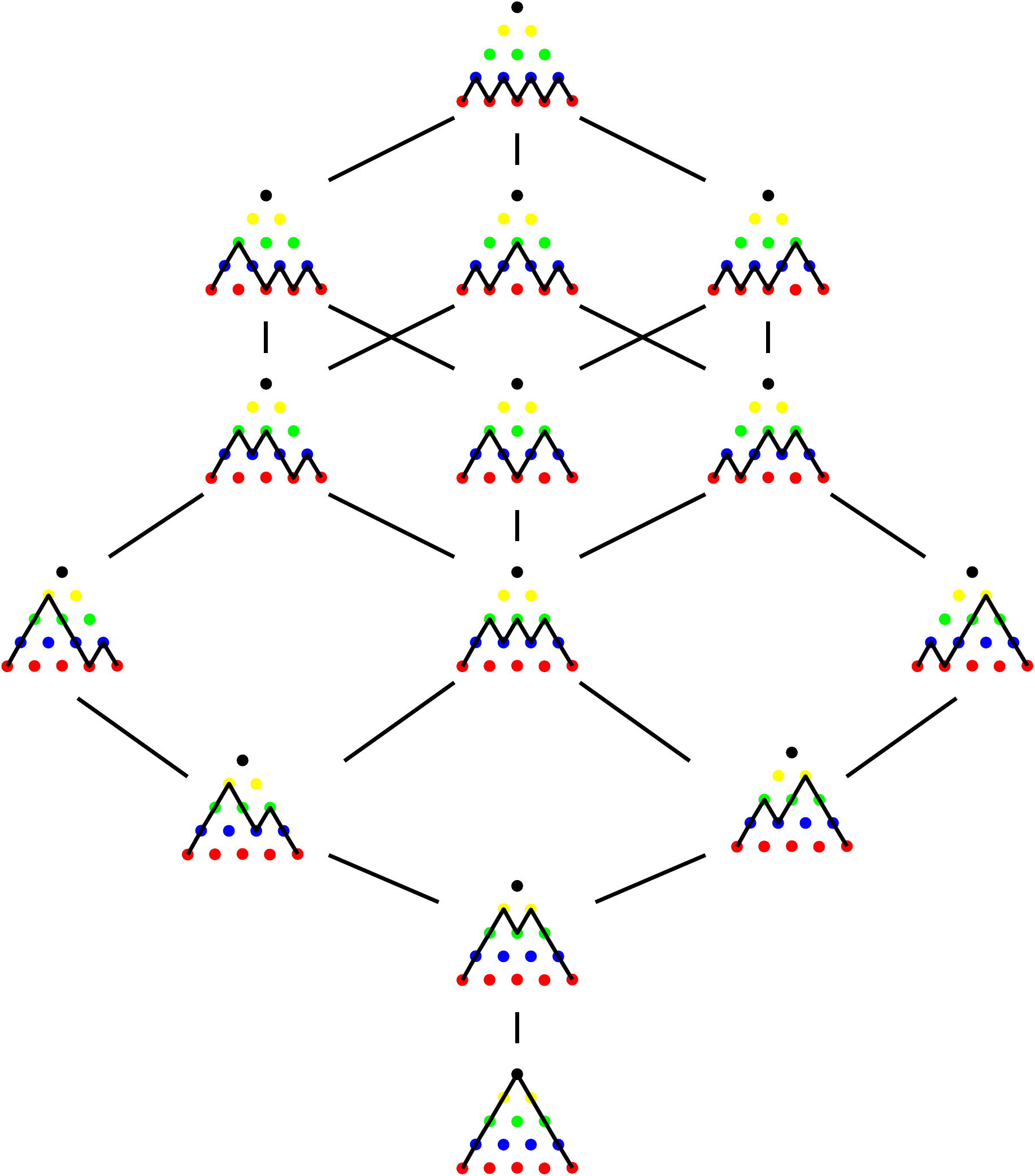}
\end{subfigure}
\caption{$D_{\{(1,\frac{n}{2}+1)\}}^n$ and Dyck paths. See Remark \ref{dyck remark} for an explanation of their connection.}
\label{fig:dyck}
\end{sidewaysfigure}

\begin{defn}

Let $i \in \{1,2\}$. Call an arc $\gamma$ in $T \in T(\mathcal{X}_i^n)$ \textit{\textbf{$\mathcal{X}$-mutable}} if $\mu_{\gamma}(T) \in T(\mathcal{X}_i^n)$.

\end{defn}

\begin{defn}

Let $\gamma$ be an $\mathcal{X}$-mutable arc in a triangulation $T \in D_{\{(1,\frac{n}{2}+1)\}}^n$, and let $\gamma'$ be the arc $\gamma$ mutates to. Call $\gamma$ \textit{\textbf{upper-mutable}} if $l(\gamma') > l(\gamma)$ and \textit{\textbf{lower-mutable}} if $l(\gamma') < l(\gamma)$.

\end{defn}

\begin{defn}

Call a shelling $\mathcal{S}$ of $T(\mathcal{X}_1^n)$ $(T(\mathcal{X}_2^n))$ an \textit{\textbf{upper (lower)}} shelling if for any triangulation $T \in \mathcal{S}$ and any upper (lower) mutable arc $\gamma$ in $T$, $\mu_{\gamma}(T)$ precedes $T$ in the ordering.

\end{defn}

\begin{defn}

Let $\mathcal{I}$ be the set of all max arcs of $D_{\{(1,\frac{n}{2}+1)\}}^n$, excluding the max arcs $\alpha_1 :=(1,\frac{n}{2}+1), \alpha_2 :=(\frac{n}{2}+1,n)$.

\end{defn}

\begin{lem}
\label{no max}
If $T \in T(\mathcal{X}_1^n)$ doesn't contain a max arc $m \in \mathcal{I}$ then there exists an upper mutable arc $\gamma$ strictly contained between the endpoints of $m$, see Figure \ref{fig:nom}.

\begin{figure}[H]
\centering
\includegraphics[width=45mm]{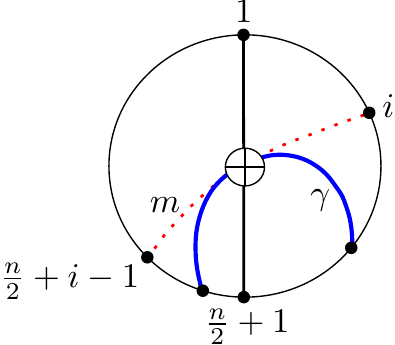}
\caption{}
\label{fig:nom}
\end{figure}

\begin{proof}

If $n\in\{2,4\}$ then $\mathcal{I} = \emptyset$  and there is nothing to prove. So assume $n \geq 6$. \\

Suppose $m = (i,\frac{n}{2}+i -1)\in \mathcal{I}$ is not in the triangulation $T$. We will show there exists a c-arc strictly contained between the endpoints of $m$. \\

Let $(i,x)$ be the c-arc of maximum length in $T$ connected to $i$. Since $m \neq (i,x)$ then $x \in [\frac{n}{2}+1,\frac{n}{2}+i-2]$. Moreover, by maximality of $(i,x)$, $(i+1,x) \in T$. So indeed there is a c-arc in $T$ strictly contained between the endpoints of $m$, see Figure \ref{fig:nomax2}.

\begin{figure}[H]
\centering
\includegraphics[width=55mm]{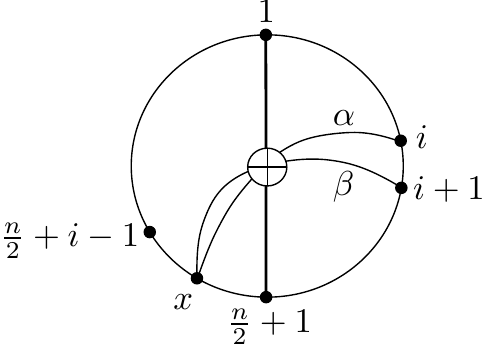}
\caption{}
\label{fig:nomax2}
\end{figure}

Of the c-arcs that are strictly contained between the endpoints of $m$, let $\gamma = (j_1,j_2)$ be an arc of minimum length. We will show that $\gamma$ is upper mutable. \\

By minimality of $\gamma$ the c-arc $(j_1,j_2-1)$ is not in $T$. Hence the c-arc $(j_1-1,j_2)$ must be in $T$. Likewise the c-arc $(j_1,j_2+1)\in T$. So $\gamma$ is contained in the quadrilateral $(j_1,j_1-1,j_2,j_2+1)$. Hence mutating $\gamma$ gives $\gamma' = (j_1-1,j_2+1)$. $l(\gamma) < l(\gamma')$ so $\gamma$ is indeed upper mutable, see Figure \ref{fig:nomax3}.

\begin{figure}[H]
\centering
\includegraphics[width=55mm]{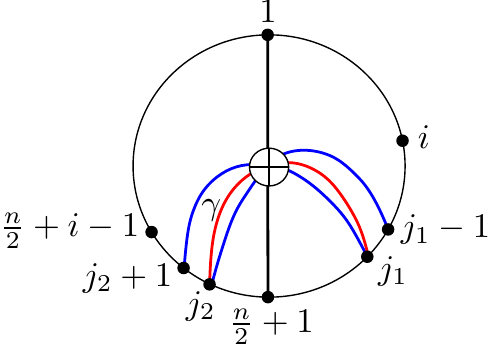}
\caption{}
\label{fig:nomax3}
\end{figure}

\end{proof}

\end{lem}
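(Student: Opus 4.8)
The plan is to argue in two stages. First I would produce \emph{at least one} c-arc of $T$ whose endpoints lie strictly between the endpoints of $m$; then, among all such c-arcs, I would take one of minimum length and show that minimality forces it to be a diagonal of a quadrilateral of $T$ whose flip is strictly longer, hence upper-mutable (compare Figure~\ref{fig:nom}).

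For $n\in\{2,4\}$ one has $\mathcal{I}=\varnothing$ and there is nothing to prove, so I would assume $n\ge 6$ and write $m=(i,\tfrac{n}{2}+i-1)$; since $m\in\mathcal{I}$ one may take $3\le i\le\tfrac{n}{2}$ (the values $i=2$ and $i=\tfrac{n}{2}+1$ make $m$ a defining c-arc of $\mathcal{X}_1^n$, or $\alpha_2$, so the hypothesis $m\notin T$ is then vacuous). Because $T$ is a c-triangulation, the c-arcs of $T$ incident to $i$ reach a block of consecutive marked points of the opposite half; write $(i,x)$ for the one of greatest length. By Lemma~\ref{type} the longest c-arc at $i$ occurring in a triangulation of $\mathcal{X}_1^n$ is exactly $m$, and $m\notin T$, so $x\in[\tfrac{n}{2}+1,\tfrac{n}{2}+i-2]$. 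In the triangle of $T$ on the far side of $(i,x)$, the third vertex $v$ is joined to $i$ by an edge of $T$; since $(i,x)$ is already the longest c-arc at $i$, that edge can only be the boundary segment $[i,i+1]$, so $v=i+1$ and $(i+1,x)\in T$. As $n\ge 6$, both $i+1$ and $x$ lie strictly between the endpoints of $m$, so the required set of c-arcs is non-empty.

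Next I would choose $\gamma=(j_1,j_2)$ of minimum length among the c-arcs of $T$ strictly between the endpoints of $m$. Minimality rules out $(j_1,j_2-1)\in T$ and $(j_1+1,j_2)\in T$, and more generally it rules out of $T$ every c-arc that shares an endpoint with $\gamma$, lies in the same region, and is shorter than $\gamma$. Reading off the fans of $T$ at $j_1$ and at $j_2$ around the edge $\gamma$, this pins the two triangles of $T$ adjacent to $\gamma$ down to $(j_1-1,j_1,j_2)$ and $(j_1,j_2,j_2+1)$, using the boundary segments $[j_1-1,j_1]$ and $[j_2,j_2+1]$; in particular $(j_1-1,j_2)\in T$ and $(j_1,j_2+1)\in T$. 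Hence $\gamma$ is a diagonal of the quadrilateral on the vertices $j_1-1,j_1,j_2,j_2+1$, its flip is $\gamma'=(j_1-1,j_2+1)$ with $l(\gamma')=l(\gamma)+2$, and since $l(\gamma')\le l(m)=\tfrac{n}{2}<\tfrac{n}{2}+1$ the arc $\gamma'$ is not a diagonal. A routine check, using that the mutation is confined to the region cut off by $m$, then gives $\mu_\gamma(T)\in T(\mathcal{X}_1^n)$, so $\gamma$ is $\mathcal{X}$-mutable, and being length-increasing it is upper-mutable.

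I expect the main obstacle to be the forcing of the two triangles adjacent to $\gamma$. The argument there must combine minimality of $l(\gamma)$ with the hypothesis that $T$ is a c-triangulation: every interior arc of $T$ is a c-arc, so in particular no arc of $T$ can have both endpoints strictly inside the boundary segment $[j_1,j_2]$ (such an arc would be a shorter c-arc lying between the endpoints of $m$). Together these exclude every alternative third vertex for the triangles on the two sides of $\gamma$, including one in the interior of $[j_1,j_2]$. Once the local picture is reduced to a genuine quadrilateral, the rest is routine length bookkeeping via Lemma~\ref{type}.
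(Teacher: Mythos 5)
Your proposal follows the same two-stage structure as the paper's proof: first produce a c-arc of $T$ strictly between the endpoints of $m$ by taking the longest c-arc $(i,x)$ at $i$ and observing that, since it is shorter than $m$, the adjacent triangle has third vertex $i+1$; then take $\gamma$ of minimum length among such c-arcs and use minimality to pin the two triangles flanking $\gamma$ down to the quadrilateral $(j_1-1,j_1,j_2,j_2+1)$, whose flip $\gamma'=(j_1-1,j_2+1)$ is longer. The only additions beyond the paper's argument are the explicit range $3\le i\le\frac{n}{2}$ for genuinely nontrivial $m$, and the verification that $\gamma'$ is not a diagonal (so $\mu_\gamma(T)$ stays in $T(\mathcal{X}_1^n)$ and $\gamma$ really is $\mathcal{X}$-mutable), which the paper leaves implicit; these are sensible completeness checks and do not change the route.
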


\begin{lem}
\label{upper shelling}
There exists an upper shelling for $T(\mathcal{X}_1^n)$. Denote this by $S(\mathcal{X}_1^n)$.

\begin{proof}

Let $\Psi_{\{\gamma_1,\ldots ,\gamma_k\}}$ be the collection of triangulations in $T(\mathcal{X}_1^n)$ containing the max arcs $\gamma_1,\ldots ,\gamma_k, \alpha_1, \alpha_2$ and no other max arcs. By Lemma \ref{R} we know that $\Psi_{\{\gamma_1,\ldots ,\gamma_k\}} \equiv \prod_{i=1}^{j}$ T($\mathcal{X}_1^{m_i})$. \\
Moreover, by induction on the trivial base case when $n=2$, and using Proposition \ref{join}, we get that there is an upper shelling for $\Psi_{\{\gamma_1,\ldots ,\gamma_k\}}$. Denote this shelling by $S(\Psi_{\{\gamma_1,\ldots ,\gamma_k\}})$. \\

\begin{claim}
\label{block}
Let $Block(k) := \displaystyle\order_{J \in \mathcal{I}^{(k)}}S(\Psi_J)$. Then $\displaystyle\order_{k=\frac{n}{2}-2}^{0} Block(k)$ is an upper shelling for $T(\mathcal{X}_1^n)$.

\begin{PC}

Let $T,S \in T(\mathcal{X}_1^n)$ and suppose $S$ precedes $T$ in the proposed ordering. Then $T \in \Psi_{J_1}$ and $S \in \Psi_{J_2}$ where $J_1,J_2 \in \mathcal{P}([1,n])$ and $|J_1| \leq |J_2| $. \textit{W.l.o.g.} we may assume $J_1 \neq J_2$ since by induction $S(\Psi_{J_1})$ is an upper shelling. \\

As $|J_1| \leq |J_2| $ and $J_1 \neq J_2 $ there is a max arc $m$ in $S$ that is not in $T$. By Lemma \ref{no max} there is an upper mutable arc $\gamma$ in $T$ strictly contained between the endpoints of $m$. Moreover $\gamma$ and $m$ are not compatible so $S\cap T \subseteq \mu_{\gamma}(T)\cap T$. And $\mu_{\gamma}(T)$ precedes $T$ in the ordering because of the upper shelling $S(\Psi_{J_1})$.

\end{PC}

\hfill \textit{End of proof of Claim \ref{block}.}

\end{claim}

\end{proof}

\end{lem}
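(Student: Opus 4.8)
The plan is to shell $T(\mathcal{X}_1^n)$ by a two-level block decomposition, in the spirit of the argument already used for $Arc(C_{n,0})$ and in Lemma \ref{half}, but carrying along the bookkeeping needed to make the resulting order an \emph{upper} shelling. First, for a subset $J=\{\gamma_1,\dots,\gamma_k\}\subseteq\mathcal{I}$ I would let $\Psi_J$ be the collection of triangulations in $T(\mathcal{X}_1^n)$ whose set of max arcs is exactly $J\cup\{\alpha_1,\alpha_2\}$; these non-empty $\Psi_J$ partition $T(\mathcal{X}_1^n)$. Cutting $\textnormal{M}_n$ along the arcs of $J\cup\{\alpha_1,\alpha_2\}$ breaks the surface into polygons and M\"obius pieces, and repeatedly applying Lemma \ref{R} (together with Corollary \ref{n-gon} for the polygonal pieces) identifies $\Psi_J$ with a product $\prod_i T(\mathcal{X}_1^{m_i})$ in which every $m_i<n$. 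So I would induct on $n$ — the base $n=2$ being a single triangulation — to equip each block with an upper shelling $S(\Psi_J)$, using the product/join construction.

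Next I would order the blocks by \emph{decreasing} number of max arcs: set $Block(k):=\order_{J\in\mathcal{I}^{(k)}}S(\Psi_J)$ and take the global order $\order_{k=\frac{n}{2}-2}^{0}Block(k)$. To verify that this is an upper shelling, suppose $S$ precedes $T$, say $T\in\Psi_{J_1}$ and $S\in\Psi_{J_2}$ with $|J_1|\le|J_2|$. If $J_1=J_2$ the desired property holds inside the block by induction, so assume $J_1\neq J_2$; then some max arc $m$ appears in $S$ but not in $T$. Lemma \ref{no max} then produces an upper-mutable arc $\gamma$ of $T$ strictly between the endpoints of $m$; since $\gamma$ and $m$ are incompatible we get $S\cap T\subseteq\mu_\gamma(T)\cap T$, so it remains only to see that $\mu_\gamma(T)$ precedes $T$. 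Here either $\mu_\gamma(T)$ acquires a new max arc and hence lies in $Block(|J_1|+1)$, which precedes $Block(|J_1|)$, or it has the same max arcs as $T$, so lies in $\Psi_{J_1}$ and precedes $T$ there because $S(\Psi_{J_1})$ is an upper shelling. The same dichotomy, applied to an arbitrary upper-mutable $\gamma$ in an arbitrary $T$, shows that $\mu_\gamma(T)$ always precedes $T$, so the order really is an upper shelling.

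I expect the genuine obstacle to be the inductive/product step — specifically, establishing the ``upper'' refinement of Proposition \ref{join}, that a product of upper shellings is itself an upper shelling, and checking that upper- and lower-mutability are compatible with the identification in Lemma \ref{R}, i.e.\ that the collapse used there shifts the length function by a constant on each factor and so preserves the sign of $l(\gamma')-l(\gamma)$. Given that, the remaining work is the routine block bookkeeping familiar from Claim \ref{Claim 1} and Claim \ref{Claim 3}.
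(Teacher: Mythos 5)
Your proposal follows the paper's own proof essentially line for line: the same blocks $\Psi_J$ ordered by decreasing number of max arcs, the same use of Lemma \ref{R} and Proposition \ref{join} with induction on $n$, and the same invocation of Lemma \ref{no max} to find the upper-mutable arc $\gamma$ with $S\cap T\subseteq\mu_\gamma(T)\cap T$. You are slightly more explicit than the paper in two places — spelling out the dichotomy that $\mu_\gamma(T)$ either gains a max arc (lands in $Block(|J_1|+1)$) or stays in $\Psi_{J_1}$, and flagging that the ``upper'' variant of Proposition \ref{join} and the compatibility of length with the collapse in Lemma \ref{R} need checking — both of which the paper leaves implicit, so this is a faithful and if anything marginally more careful rendering of the same argument.
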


An analogous argument proves the following lemma.

\begin{lem}
There exists a lower shelling for $T(\mathcal{X}_2^n)$. Denote this by $S(\mathcal{X}_2^n)$.

\end{lem}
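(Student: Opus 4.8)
The statement to be proved is the ``lower shelling'' analogue of Lemma~\ref{upper shelling}, namely that $T(\mathcal{X}_2^n)$ admits a lower shelling $S(\mathcal{X}_2^n)$. The plan is to run the entire argument of Lemma~\ref{upper shelling} through the order-reversing duality that the length function provides. Recall from Lemma~\ref{type} and Corollary~\ref{tmax} that $T(\mathcal{X}_2^n)$ is, up to collapsing the quadrilateral $(1,2,\frac{n}{2}+1,n)$ as in Lemma~\ref{R}, structurally the mirror image of $T(\mathcal{X}_1^n)$: c-arcs of maximal length (``max arcs'') are traded for c-arcs of minimal length (``min arcs''), upper-mutable arcs become lower-mutable arcs, the unique length-maximal triangulation $T_{\text{max}}$ corresponds to the unique length-minimal $T_{\text{min}}$, and the flip poset is the same Dyck lattice turned upside down (Remark~\ref{dyck remark}). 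So every ingredient used for $\mathcal{X}_1$ has an exact counterpart for $\mathcal{X}_2$ obtained by replacing ``$l(\gamma)\le\frac{n}{2}$'' with ``$l(\gamma)\ge\frac{n}{2}+2$'' and reversing all length inequalities.

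The first step is to record the ``no min arc'' analogue of Lemma~\ref{no max}: if $T\in T(\mathcal{X}_2^n)$ does not contain a min arc $m$ (lying strictly between the two boundary min arcs $(1,\frac{n}{2})$ and $(n,\frac{n}{2}+2)$), then there is a lower-mutable arc $\gamma$ strictly enclosed by the endpoints of $m$. The proof is verbatim the proof of Lemma~\ref{no max} with ``maximum length'' replaced by ``minimum length'': take the c-arc of \emph{minimum} length attached to a suitable endpoint of $m$, show by minimality that its neighbour on the short side forces a c-arc strictly inside $m$, then among all c-arcs strictly inside $m$ pick one of \emph{maximum} length; minimality/maximality forces it to sit in a quadrilateral and flip to a strictly shorter c-arc, hence it is lower-mutable. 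Once this is in hand, the second step mirrors Claim~\ref{block}: define $\Psi'_{\{\gamma_1,\dots,\gamma_k\}}$ to be the triangulations of $T(\mathcal{X}_2^n)$ containing exactly the min arcs $\gamma_1,\dots,\gamma_k$ together with the two boundary min arcs; by the min-arc version of Lemma~\ref{R} each $\Psi'$ is a product of smaller $T(\mathcal{X}_2^{m_i})$'s, so by induction on $n$ (base case $n=2$ trivial) and Proposition~\ref{join} it has a lower shelling $S(\Psi'_J)$. Then set $Block(k):=\order_{J\in\mathcal{I}'^{(k)}}S(\Psi'_J)$ and take $\order_{k=\frac{n}{2}-2}^{0}Block(k)$, where $\mathcal{I}'$ is the set of interior min arcs; for $S$ preceding $T$, either they lie in the same $\Psi'$ (handled by the inductive lower shelling) or $S$ contains an interior min arc $m$ absent from $T$, in which case the min-arc lemma gives a lower-mutable $\gamma\subseteq T$ strictly inside $m$, incompatible with $m$, so $S\cap T\subseteq \mu_\gamma(T)\cap T$ and $\mu_\gamma(T)$ precedes $T$ by the inductive lower shelling — exactly the required condition, and it is a \emph{lower} shelling because every step flips to something of smaller length.

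I do not expect any genuine obstacle here: the content is entirely a symmetry argument, and the only thing to be careful about is bookkeeping — making sure the roles of $(1,\frac{n}{2})$ and $(n,\frac{n}{2}+2)$ as the two ``boundary'' min arcs are played correctly (they are the $\mathcal{X}_2$-analogues of $\alpha_1,\alpha_2$), that the collapsing move of Lemma~\ref{R} genuinely identifies ``$\Psi'$ with no interior min arcs'' with a smaller $T(\mathcal{X}_2^{2(k-1)})$, and that the induction on $n$ is anchored correctly. If one wanted to avoid even writing this out, one could instead phrase a single lemma parametrised by $i\in\{1,2\}$ and prove both the upper shelling of $T(\mathcal{X}_1^n)$ and the lower shelling of $T(\mathcal{X}_2^n)$ simultaneously, the two cases differing only by the direction of the length inequalities; but given that Lemma~\ref{upper shelling} has already been proved in the $\mathcal{X}_1$ form, the cleanest exposition is simply to say ``an analogous argument, with max arcs replaced by min arcs and all length inequalities reversed, proves the lemma,'' filling in the two displays above if the reader wants detail.
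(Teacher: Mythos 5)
Your proposal matches the paper's approach exactly: the paper's entire proof of this lemma is the one-line remark ``An analogous argument proves the following lemma,'' and you have spelled out precisely that dual argument, trading max arcs for min arcs, upper-mutable for lower-mutable, and reversing all length inequalities throughout Lemma~\ref{no max}, Lemma~\ref{R}, and Claim~\ref{block}. One small caution, which does not affect the substance: your opening ``mirror image up to collapsing the quadrilateral'' framing is best read as the order-reversing reflection of the Dyck lattice from Remark~\ref{dyck remark} (a purely combinatorial duality) rather than an actual homeomorphism of $\textnormal{M}_n$ carrying $\mathcal{X}_1^n$ to $\mathcal{X}_2^n$; the detailed steps you then outline do not rely on such a homeomorphism and are correct.
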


\begin{defn}

Call a c-arc $\gamma$ in a triangulation $T \in D_{\{(1,\frac{n}{2}+1)\}}^n$ \textit{\textbf{special mutable}} if any of the following is true:

\begin{itemize}

\item $T \in T(\mathcal{X}_1^n)$ and $\gamma$ is upper mutable.
\item $T \in T(\mathcal{X}_2^n)$ and $\gamma$ is lower mutable.
\item $\gamma$ mutates to a diagonal c-arc.

\end{itemize}

\end{defn}

\begin{lem}
\label{counted}
For any $T \in T(\mathcal{X}_1^n)\setminus \{T_{max}\}$, $T_{max}$ is connected to $T$ by a sequence of lower mutations.

\begin{proof}

By Lemma \ref{no max} we can keep performing mutations on upper mutable arcs until we reach a triangulation containing every max arc. By Corollary \ref{tmax} the only triangulation in $T(\mathcal{X}_1^n)$ that contains every max arc is $T_{max}$. Hence $T$ is connected to $T_{max}$ by a sequence of upper mutations. Equivalently, $T_{max}$ is connected to $T$ by a sequence of lower mutations.

\end{proof}

\end{lem}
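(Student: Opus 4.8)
The plan is to prove Lemma \ref{counted} by combining Lemma \ref{no max} with Corollary \ref{tmax}. The key observation is that Lemma \ref{no max} guarantees that whenever a triangulation $T \in T(\mathcal{X}_1^n)$ fails to contain some max arc $m \in \mathcal{I}$, there is an upper mutable arc $\gamma$ strictly between the endpoints of $m$; performing this upper mutation replaces $\gamma$ by the strictly longer arc $\gamma'$. So first I would argue that repeatedly applying upper mutations must terminate: each upper mutation strictly increases $\sum_{\gamma \in T} l(\gamma)$, and by Corollary \ref{tmax} this sum is bounded above (with the maximum attained uniquely at $T_{\max}$), so the process cannot continue indefinitely.

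Next I would argue that the process can only terminate at $T_{\max}$. If $T' \in T(\mathcal{X}_1^n)$ admits no upper mutable arc, then in particular $T'$ cannot be missing any max arc $m \in \mathcal{I}$ — otherwise Lemma \ref{no max} would produce an upper mutable arc, a contradiction. Hence $T'$ contains every max arc in $\mathcal{I}$, and also $\alpha_1, \alpha_2$ by definition, so $T'$ contains the full partial triangulation $\mathcal{P}$ of all max arcs used in the proof of Corollary \ref{tmax}. By that corollary the unique triangulation in $T(\mathcal{X}_1^n)$ extending $\mathcal{P}$ is $T_{\max}$, so $T' = T_{\max}$.

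Putting these together: starting from any $T \in T(\mathcal{X}_1^n) \setminus \{T_{\max}\}$, we obtain a finite sequence of upper mutations terminating at $T_{\max}$; that is, $T$ is connected to $T_{\max}$ by a sequence of upper mutations. Reading this sequence backwards and noting that the inverse of an upper mutation (increasing length) is a lower mutation (decreasing length) — since a mutation $\mu_\gamma$ and its reverse $\mu_{\gamma'}$ swap the two arcs of a quadrilateral, and "upper/lower" is determined by whether length increases or decreases — we conclude that $T_{\max}$ is connected to $T$ by a sequence of lower mutations, as claimed.

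The main obstacle, though it is more bookkeeping than conceptual, is verifying that the reverse of an upper mutation genuinely qualifies as a lower mutation in the sense of the definitions: one must check that $\gamma'$ (the longer arc obtained by an upper mutation from $T$) is itself $\mathcal{X}$-mutable in $\mu_\gamma(T)$ with $\mu_{\gamma'}(\mu_\gamma(T)) = T \in T(\mathcal{X}_1^n)$, so that $\gamma'$ is lower mutable there. This is immediate from Proposition \ref{flip}, since flips are involutive: $\mu_{\gamma'}(\mu_\gamma(T)) = T$, and $T \in T(\mathcal{X}_1^n)$ by hypothesis, so $\gamma'$ is $\mathcal{X}$-mutable, and $l(\gamma) < l(\gamma')$ makes it lower mutable. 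Everything else follows from the finiteness argument driven by the length functional of Corollary \ref{tmax}.
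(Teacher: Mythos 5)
Your proof is correct and follows essentially the same route as the paper: invoke Lemma \ref{no max} to keep producing upper mutable arcs, use Corollary \ref{tmax} to identify the unique terminal triangulation as $T_{\max}$, and reverse the chain. You fill in two details the paper leaves implicit — termination via monotonicity of $\sum_{\gamma\in T} l(\gamma)$ and the involutivity of flips making the reverse chain a sequence of lower mutations — but these are refinements, not a different argument.
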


\begin{lem}
\label{fix even}
Let $T \in D_{\{(1,\frac{n}{2}+1)\}}^n$ and let $P_T$ be the partial triangulation of $\textnormal{M}_n$ consisting of all the special mutable arcs in $T$. Then any triangulation of $P_T$ cannot contain the diagonal c-arc $(i,\frac{n}{2}+i)$ $\forall i \in \{2,\ldots,\frac{n}{2}\}$.

\begin{proof} Assume $T \in T(\mathcal{X}_1^n)$. An analogous argument works if $T \in T(\mathcal{X}_2^n)$. We prove the lemma via induction on the upper shelling order of $T(\mathcal{X}_1^n)$. \\

The first triangulation in the upper shelling ordering is $T_{max}$. The special mutable arcs in $T_{max}$ are $(i, \frac{n}{2}+i-2)$ $\forall i \in [3, \frac{n}{2}+1]$. However, the c-arc $(i, \frac{n}{2}+i-2)$ is not compatible with the diagonal c-arc $(i-1, \frac{n}{2}+i-1)$. And so ranging $i$ over $3,\ldots, \frac{n}{2}+1$ proves the base inductive case. \\

Let $\gamma$ be a lower mutable arc in a triangulation $T \in T(\mathcal{X}_1^n)$. By Lemma \ref{counted}, to prove the lemma it suffices to show that the special mutable arcs in $\mu_{\gamma}(T)$ prevent the same diagonal c-arcs as the special mutable arcs in $T$. Let $\beta_1 , \beta_2$ be the c-arcs containing $\gamma$ in a quadrilateral. See Figure \ref{fig:fixspecial}.

\begin{figure}[H]
\centering
\includegraphics[width=35mm]{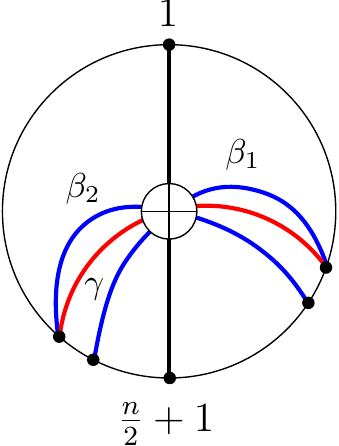}
\caption{}
\label{fig:fixspecial}
\end{figure}

The arcs $\beta_1$ and $\beta_2$ may be special mutable in $T$ but in $\mu_{\gamma}(T)$ they definitely won't be. The implication of this is that $\beta_1$ and $\beta_2$ may be c-arcs in $P_T$, and prevent certain diagonal arcs, but $\beta_1,\beta_2 \notin P_{\mu_{\gamma}(T)}$ so $\mu_{\gamma}$ needs to make up this difference. Indeed, it does make up the difference as the diagonal arcs not compatible with either $\beta_1$ or $\beta_2$ are precisely the diagonal arcs not compatible with $\mu_{\gamma}$.

\end{proof}

\end{lem}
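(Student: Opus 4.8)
The plan is as follows. The statement is equivalent to: for every $i\in\{2,\ldots,\frac{n}{2}\}$ there is a special mutable arc of $T$ that is incompatible with the diagonal $(i,\frac{n}{2}+i)$ --- a single such arc suffices, since $P_T$ contains \emph{all} special mutable arcs of $T$, so every triangulation refining $P_T$ must avoid that diagonal. By the length-reversing symmetry of $D_{\{(1,\frac{n}{2}+1)\}}^n$ interchanging $T(\mathcal{X}_1^n)$ and $T(\mathcal{X}_2^n)$ (cf.\ Remark~\ref{dyck remark}) it is enough to treat $T\in T(\mathcal{X}_1^n)$, reading ``lower/min'' for ``upper/max'' in the opposite case. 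I would then walk along the chain of lower mutations joining $T_{\max}$ to $T$ furnished by Lemma~\ref{counted}, and show that the set of diagonals forbidden by the special mutable arcs never shrinks as one steps away from $T_{\max}$; combined with the base case below, this forces this forbidden set to be all of $\{(i,\frac{n}{2}+i):i\in\{2,\ldots,\frac{n}{2}\}\}$ at every $T$.

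For the base case I would use the explicit description of $T_{\max}$ in Corollary~\ref{tmax}: for each $i\in\{3,\ldots,\frac{n}{2}+1\}$ the arc $(i,\frac{n}{2}+i-2)$ of $T_{\max}$ lies in a quadrilateral whose two diagonals are $(i,\frac{n}{2}+i-2)$ itself and $(i-1,\frac{n}{2}+(i-1))$; hence $(i,\frac{n}{2}+i-2)$ mutates to a diagonal --- so it is special mutable --- and, being the other diagonal of that quadrilateral, it is incompatible with $(i-1,\frac{n}{2}+(i-1))$. As $i$ ranges over $\{3,\ldots,\frac{n}{2}+1\}$ the index $i-1$ ranges over $\{2,\ldots,\frac{n}{2}\}$, so $P_{T_{\max}}$ forbids every required diagonal.

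For the inductive step, fix $S\in T(\mathcal{X}_1^n)$ on the chain and let $\gamma$ be the lower mutable arc with $\mu_\gamma(S)$ the next triangulation. The mutation $\mu_\gamma$ changes only the two triangles meeting along $\gamma$; these form a quadrilateral two of whose sides are boundary segments of $\textnormal{M}_n$ and whose other two sides are c-arcs $\beta_1,\beta_2$ (that the quadrilateral is non-degenerate and has exactly two boundary sides is a routine check using $\mathcal{X}$-mutability of $\gamma$). Hence every arc other than $\gamma,\beta_1,\beta_2$ keeps its two incident triangles, so keeps its special-mutability status and its set of incompatible diagonals; write $N$ for the set of diagonals they forbid, the same computed in $S$ or in $\mu_\gamma(S)$. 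In $S$ the arc $\gamma$ is lower, not upper, mutable and $\gamma':=\mu_\gamma(\gamma)$ is not a diagonal, so $\gamma$ is not special mutable in $S$; hence the set forbidden by $P_S$ is contained in $N$ together with the diagonals incompatible with $\beta_1$ and those incompatible with $\beta_2$. In $\mu_\gamma(S)$ the arc $\gamma'$ is upper mutable, hence special mutable, so the set forbidden by $P_{\mu_\gamma(S)}$ contains $N$ together with the diagonals incompatible with $\gamma'$. So it is enough to show
\[
\{\text{diagonals incompatible with }\beta_1\}\ \cup\ \{\text{diagonals incompatible with }\beta_2\}\ \subseteq\ \{\text{diagonals incompatible with }\gamma'\}\ \cup\ N .
\]

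To see this inclusion, note that each $\beta_j$ is a side of one of the two triangles into which $\gamma'$ splits the new quadrilateral, and the remaining two sides of that triangle are $\gamma'$ and a boundary segment; so a diagonal crossing $\beta_j$ enters that triangle and, being an interior arc, must leave it across $\gamma'$, hence is incompatible with $\gamma'$. The only gap is the boundary cases, where the diagonal shares an endpoint with $\beta_j$ or with $\gamma'$, and this is the step I expect to be the main obstacle: handling it cleanly needs an explicit combinatorial criterion for when two c-arcs of $\textnormal{M}_n$ are incompatible (two c-arcs can be incompatible while disjoint as curves, because of the crosscap), followed by a short case check that such a diagonal is still incompatible with $\gamma'$ or else already lies in $N$. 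The remaining ingredients --- the chain of lower mutations, the base case, the localisation of $\mu_\gamma$, and the identification of the quadrilateral around $\gamma$ --- are routine. The case $T\in T(\mathcal{X}_2^n)$ then follows by the identical argument with ``lower/min'' replacing ``upper/max''.
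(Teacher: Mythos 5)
Your proposal follows essentially the same route as the paper: reduce to $T(\mathcal{X}_1^n)$, induct along the chain of lower mutations from $T_{\max}$ (via Lemma \ref{counted} and the upper shelling), verify the base case at $T_{\max}$ using the arcs $(i,\frac{n}{2}+i-2)$, and in the inductive step reduce to the claim that the diagonals incompatible with the quadrilateral sides $\beta_1,\beta_2$ are already incompatible with the flipped arc $\gamma'$. You are a bit more explicit in formalising this as a set inclusion and in geometrically justifying the crossing argument, and the boundary/shared-endpoint case you flag as ``the main obstacle'' is precisely the point the paper also passes over without detail.
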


\begin{lem}
\label{at least one diagonal}
In each c-triangulation $T$ of $\textnormal{M}_n$ there is at least one diagonal arc.

\end{lem}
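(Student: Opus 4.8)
The plan is to reduce the lemma to a one-line pigeonhole statement about windows of a $0/1$-sequence, by reading off the triangulation near the crosscap. (Throughout, $n$ is even, as the notion of a diagonal requires.) First I would fix a regular neighbourhood $\mu$ of the one-sided core curve, so that $\mu$ is a M\"obius band and $C:=\textnormal{M}_n\setminus\mathrm{int}(\mu)$ is a cylinder with $\partial C=\partial\textnormal{M}_n\sqcup\partial\mu$. The geometric input is the standard fact that an essential arc of $\mu$ running between two points of $\partial\mu$ and crossing the core once joins a pair of \emph{antipodal} points, in the cyclic parametrisation of $\partial\mu$ whose length is twice that of the core. Putting $T$ in minimal position with respect to $\partial\mu$, each of its $n$ c-arcs then meets $\mu$ in a single core-crossing arc and meets $C$ in two disjoint arcs joining $\partial\textnormal{M}_n$ to $\partial\mu$, so $T$ cuts $C$ into $2n$ such spanning arcs. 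Labelling their $\partial\mu$-endpoints $z_1,\dots,z_{2n}$ cyclically and letting $\pi(k)\in\mathbb Z/n$ be the marked point at the other end of the spanning arc at $z_k$, disjointness of spanning arcs in a cylinder makes $\pi\colon\mathbb Z/2n\to\mathbb Z/n$ weakly monotone and of winding number $1$ (the $2n$ complementary regions of $C$ sweep out $\partial\textnormal{M}_n$ exactly once). By antipodality the spanning arcs at $z_k$ and $z_{k+n}$ reassemble into one c-arc $\gamma_k=\gamma_{k+n}$ of $T$, with endpoints $\pi(k)$ and $\pi(k+n)$; and $\gamma_k$ is a diagonal exactly when $\pi(k+n)-\pi(k)\equiv n/2\pmod n$.

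Next I would observe that no c-arc of a c-triangulation can cut off a triangle, since cutting $\textnormal{M}_n$ along such an arc would leave a copy of $\textnormal{M}_{n-1}$, which is non-orientable. Hence every marked point is an endpoint of some c-arc of $T$, so $\pi$ is onto. Writing $\delta(k)$ for the jump at $k$ of a weakly increasing lift $\tilde\pi\colon\mathbb Z\to\mathbb Z$ of $\pi$ (so $\tilde\pi(k+2n)=\tilde\pi(k)+n$ and $\sum_{k=1}^{2n}\delta(k)=n$), surjectivity forces $\delta(k)\in\{0,1\}$ for every $k$: a jump $\delta(k)\ge 2$ would make $\tilde\pi$ skip an integer which, by monotonicity, is attained at no index, so its residue mod $n$ is missed. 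Finally $\tilde\pi(k+n)-\tilde\pi(k)=\sum_{j=k}^{k+n-1}\delta(j)=:W(k)$ lies in $\{0,1,\dots,n\}$, so $W(k)\equiv n/2\pmod n$ if and only if $W(k)=n/2$; thus $T$ contains a diagonal if and only if some window sum $W(k)$ equals $n/2$.

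This last point is elementary: $W(k+1)-W(k)=\delta(k+n)-\delta(k)\in\{-1,0,1\}$ and $W(k)+W(k+n)=\sum_k\delta(k)=n$, so if no $W(k)$ equalled $n/2$ there would be indices where $W<n/2$ and indices where $W>n/2$, and walking round the cycle $\mathbb Z/2n$ the integer-valued $W$, which changes by at most $1$ at each step, would have to pass through the integer $n/2$ (a discrete intermediate value argument), a contradiction. I expect the bulk of the work, and the part most prone to hand-waving, to be the first paragraph: the antipodal pairing on $\partial\mu$, the reduction of $T$ to minimal position against $\partial\mu$, and the verification that the resulting $\pi$ has winding number exactly $1$ (this is what pins down $\sum\delta(k)=n$ rather than a larger multiple of $n$, and it is essential). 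An alternative is to phrase the whole set-up in the orientation double cover $C_{n,n}$ of $\textnormal{M}_n$, where c-triangulations correspond to $\sigma$-invariant triangulations all of whose arcs join the two boundary circles and antipodality becomes the statement that the deck transformation $\sigma$ swaps the two lifts of a c-arc; the combinatorial heart, the sequence $\delta$ and the window sums, is the same.
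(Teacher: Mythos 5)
Your argument is correct and takes a genuinely different route from the paper's.  The paper proves the lemma by contradiction with a purely local induction: assuming no diagonal and normalising so that the maximal-length c-arc at vertex $1$ is $(1,j_1)$ with $j_1\in[1,\frac{n}{2}]$, it inductively bounds the maximal-length c-arc at each successive vertex $2,3,\dots$ until it is forced to exhibit the diagonal $(j_1,\frac{n}{2}+j_1)$.  You instead argue globally: cut $\textnormal{M}_n$ along the boundary of a M\"obius neighbourhood $\mu$ of the core so that the $n$ c-arcs of $T$ become $2n$ disjoint spanning arcs of the complementary cylinder, encode them as a weakly monotone map $\pi\colon\mathbb{Z}/2n\to\mathbb{Z}/n$ of winding number one, use the observation that no c-arc can cut off a triangle to get $\pi$ surjective and hence a $0/1$-valued jump sequence $\delta$ with $\sum\delta=n$, and finish with a discrete intermediate-value argument on the window sums $W(k)=\sum_{j=k}^{k+n-1}\delta(j)$: since $W(k)+W(k+n)=n$ and $|W(k+1)-W(k)|\le 1$, some $W(k)$ must equal $\frac{n}{2}$, i.e.\ $\gamma_k$ is a diagonal.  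The combinatorial heart of your version is cleaner and more conceptual than the paper's vertex-by-vertex chase — it exposes a diagonal as a "mean-value" arc and dovetails nicely with the Dyck-path picture in Remark~\ref{dyck remark} — but, as you acknowledge, the topological scaffolding you compress into the first paragraph (simultaneous minimal position against $\partial\mu$, the antipodal pairing on $\partial\mu$, and the verification that the winding number is exactly one rather than larger) is where a fully rigorous write-up would spend most of its effort, and those are precisely the points the paper's more pedestrian induction sidesteps entirely.
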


\begin{proof}

Let us assume, for a contradiction, that there is no diagonal arc in $T$. Without loss of generality, we may assume that the c-arc connected to $1$, of maximum length, is $\gamma = (1,j_1)$ for some $j_1 \in [1,\frac{n}{2}]$. (Otherwise just flip the picture.)

Let $\gamma_2 = (2,j_2)$ be the c-arc of maximum length in $T$ that is connected to $2$. If $j_2 > \frac{n}{2}$ then by maximality of $\gamma_1$ there is a c-arc $(2,\frac{n}{2})$. Hence, $j_2 \in [j_1,\frac{n}{2}+1]$. Inductive reasoning shows that the c-arc connected to $j_1-1$ in $T$, of maximum length, is $\gamma_{j-1}=(j-1,x)$ for some $x \in [j, \frac{n}{2}+j_1-2]$. However, then by the maximality of $\gamma_{j-1}$ we must have $(j_1, \frac{n}{2}+j_1) \in T$. This gives a contradiction, and so the lemma is proved.

\end{proof}

\begin{lem}
\label{even}
 $T(\textnormal{M}_n^{\otimes})$ is shellable for even $n$. 
 
\end{lem}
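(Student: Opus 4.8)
The plan is to reduce the shellability of $T(\textnormal{M}_n^{\otimes})$ to the shellability of $D_{\{(1,\frac{n}{2}+1)\}}^n$, by the same block-decomposition strategy used in Lemma \ref{half}. Fix a diagonal c-arc $d_i := (i,\frac{n}{2}+i)$ for each $i$. For a subset $J \subseteq \{1,\ldots,\frac{n}{2}\}$, let $\Omega_J$ be the collection of c-triangulations whose set of diagonal arcs is exactly $\{d_i : i \in J\}$. (By Lemma \ref{at least one diagonal} every c-triangulation contains at least one diagonal, so $J$ is always non-empty, and since $d_i = d_{i+n/2}$ we may index by $i\in\{1,\ldots,\frac{n}{2}\}$.) Cutting $\textnormal{M}_n$ along the diagonals in $\{d_i : i\in J\}$ separates the strip into $|J|$ pieces, each of which is a copy of some $\textnormal{M}_{m}$ (or of a polygon) with the induced c-triangulation having no diagonals; up to a relabelling and collapsing, triangulating each such piece with no diagonals is exactly the situation of $D_{\{(1,\frac{m}{2}+1)\}}^{m}$. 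Hence $\Omega_J$ is equivalent to a product of sets each of the form $D_{\{(1,\frac{m}{2}+1)\}}^{m}$, so by Proposition \ref{join} it suffices to show each $D_{\{(1,\frac{n}{2}+1)\}}^n$ is shellable, and then order the blocks $\Omega_J$ appropriately.

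For the shellability of $D_{\{(1,\frac{n}{2}+1)\}}^n$ itself, I would use Lemma \ref{type}: this set is the disjoint union $T(\mathcal{X}_1^n) \sqcup T(\mathcal{X}_2^n)$, with the two parts meeting only in the single vertex $(1,\frac{n}{2}+1)$ (Corollary \ref{intersection}). The idea is to concatenate an upper shelling $S(\mathcal{X}_1^n)$ of $T(\mathcal{X}_1^n)$ (Lemma \ref{upper shelling}) in reverse — so that it \emph{ends} at $T_{max}$ — followed by a lower shelling $S(\mathcal{X}_2^n)$ of $T(\mathcal{X}_2^n)$ starting at $T_{min}$. One checks the shelling condition (Definition \ref{shelling defn}): within each $T(\mathcal{X}_i^n)$ it holds by construction; the only new case is $S \in T(\mathcal{X}_1^n)$, $T \in T(\mathcal{X}_2^n)$, and here $S \cap T = \{(1,\frac{n}{2}+1)\}$ by Corollary \ref{intersection}, which is a face of $T \cap T'$ for essentially any neighbour $T'$ of $T$ containing the diagonal — in particular one coming earlier. (A reversed shelling of a shellable complex is again a shelling when the complex is a pseudo-manifold/ball, which applies here; alternatively one argues directly using Lemmas \ref{no max} and \ref{counted}.) This establishes that $D_{\{(1,\frac{n}{2}+1)\}}^n$ is shellable.

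Having shelled each block $\Omega_J$, I would then order the blocks by decreasing $|J|$, exactly mirroring Claim \ref{Claim 3}: list $\displaystyle\order_{k=n/2}^{1} \big(\order_{J \in \{1,\ldots,n/2\}^{(k)}} S(\Omega_J)\big)$. To verify the shelling property, suppose $S$ precedes $T$ with $T \in \Omega_{J_1}$, $S \in \Omega_{J_2}$, $|J_1| \le |J_2|$, and $J_1 \ne J_2$. Then there is a diagonal $d_i$ with $i \in J_2 \setminus J_1$, i.e.\ $d_i \in S$ but $d_i \notin T$. This is where Lemma \ref{fix even} does the work: it guarantees that $T$ has a special mutable arc $\gamma$ whose mutation produces the diagonal $d_i$ (or at least an arc preventing $d_i$ in a way that forces the relevant intersection to be captured). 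Mutating such a $\gamma$ gives $\mu_\gamma(T) \in \Omega_{J_1 \cup \{i\}}$, which has a strictly larger diagonal set and hence precedes $T$; and since $\gamma$ is incompatible with $d_i \in S$ we get $S \cap T \subseteq \mu_\gamma(T) \cap T$. One must also handle the case where $T$ and $S$ share all diagonals but differ inside a block — but that is subsumed by the within-block shelling. The main obstacle is precisely this last verification: making sure that for \emph{every} missing diagonal there is a special mutable arc of $T$ whose mutation both recovers that diagonal and keeps us inside $D_{\{(1,\frac{m}{2}+1)\}}^{m}$ on the relevant sub-piece — this is exactly the content of Lemma \ref{fix even} together with the bijection with Dyck paths (Remark \ref{dyck remark}), so the proof amounts to carefully assembling these already-established pieces and checking the boundary cases $n \in \{2,4\}$ separately.
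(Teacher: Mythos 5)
Your block decomposition by diagonal sets $J$ and the reduction of each region to $D_{\{(1,\frac{m}{2}+1)\}}^m$ matches the paper's strategy, as does the use of the upper and lower shellings $S(\mathcal{X}_1^n)$, $S(\mathcal{X}_2^n)$ and of Lemma \ref{fix even}. However, there is a genuine gap in the middle of your argument: the claim ``This establishes that $D_{\{(1,\frac{n}{2}+1)\}}^n$ is shellable'' is \emph{false} for $n \geq 4$, and the paper explicitly flags this. By Corollary \ref{intersection}, every facet of $T(\mathcal{X}_1^n)$ meets every facet of $T(\mathcal{X}_2^n)$ in exactly the single vertex $(1,\frac{n}{2}+1)$. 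So when you pass from the last facet of the (reversed) $S(\mathcal{X}_1^n)$ piece to the first facet $T$ of $S(\mathcal{X}_2^n)$, the intersection $\bigl(\bigcup_{S < T} S\bigr) \cap T$ is just that one vertex, i.e.\ $0$-dimensional, whereas the shelling condition requires dimension $n-2 \geq 2$. No choice of ordering within the two pieces (reversed or not) can repair this, because no mutation $\mu_\gamma(T)$ of $T$ lands in $T(\mathcal{X}_1^n)$: such a $\mu_\gamma(T)$ shares $n-1$ arcs with $T$, but can share at most two arcs with any $S \in T(\mathcal{X}_1^n)$. The parenthetical appeal to ``a reversed shelling of a ball is again a shelling'' is irrelevant here: the issue is not reversal, it is that $D_{\{(1,\frac{n}{2}+1)\}}^n$ is \emph{not} shellable at all as a standalone complex for $n \geq 4$ (it is essentially two balls glued along a single point).

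The paper's actual escape route, which your write-up gestures at but does not carry out, is that $O(D_I^n)$ is used only as an \emph{ordering}, not as a shelling of $D_I^n$: when $T$ is the first triangulation of $S(\mathcal{X}_2^n)$ and $S \in S(\mathcal{X}_1^n)$, one instead mutates an arc $\gamma$ of $T$ that flips \emph{to a diagonal}, so that $\mu_\gamma(T)$ lies in a block $D_{I'}^n$ with $|I'| > |I|$ and hence precedes $T$ in the \emph{global} block order. This step is what makes the ambient ordering a shelling even though no individual $O(D_I^n)$ is one. You would also need to be precise about what Lemma \ref{fix even} is used for: it does not promise a special mutable arc whose flip produces a prescribed missing diagonal $d_i$; rather, it says that if \emph{every} special mutable arc of $T$ already lies in $S$, then the partial triangulation of special mutable arcs blocks all extra diagonals, forcing $I_2 \subseteq I_1$ and hence $I_1 = I_2$. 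Your proposal as written would compile into a proof with a false intermediate lemma; fixing it requires importing the paper's ``mutate to a diagonal and jump to an earlier block'' argument for the cross-over between $T(\mathcal{X}_1^n)$ and $T(\mathcal{X}_2^n)$.
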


\begin{proof}

Let $\mathcal{K}$ be the collection of diagonal c-arcs of $\textnormal{M}_n$. Consider $I = \{ \gamma_1, \ldots, \gamma_k \} \subseteq \mathcal{K}$ and let $D_{I}^n$ consist of all triangulations of $T(\textnormal{M}_n^{\otimes})$ containing every diagonal c-arcs in $I$, and no diagonal c-arcs in $\mathcal{K}\setminus I$. The set of c-triangulations $T(R)$ of a region $R$ cut out by two diagonal c-arcs, so that no other diagonal c-arcs occur in the region, is equivalent to $D_{\{(1,\frac{m}{2}+1)\}}^m$ for some $m \in [2,n-2]$. See Figure \ref{fig:collapsediagonal}.

\begin{figure}[H]
\centering
\includegraphics[width=120mm]{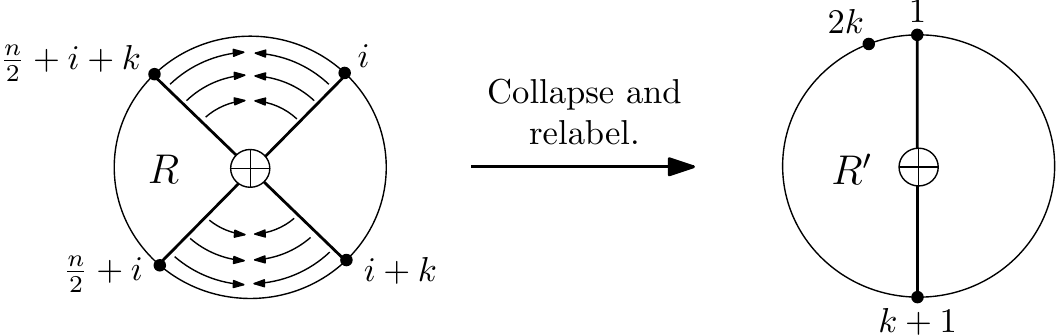}
\caption{$T(R) \equiv T(R') = D_{\{(1,k+1)\}}^{2k}$}
\label{fig:collapsediagonal}
\end{figure} 

Choose $\displaystyle\order_{i=1}^2 S(\mathcal{X}_i^m)$ to be the ordering of $D_{\{(1,\frac{m}{2}+1)\}}^m$. Take the disjoint union of these orderings, over all the regions cut out by diagonal c-arcs in $I$, to get an ordering of $D_{I}^n$. Denote this ordering by $O(D_{I}^n)$.

\begin{claim}
\label{even claim}

$\displaystyle\order_{k=\frac{n}{2}}^1 Block(k)$ is a shelling for $T(\textnormal{M}_n^{\otimes})$. \\ 

Where $Block(k) := \displaystyle\order_{I \in \mathcal{K}^{(k)}}O(D_{I}^n)$. \\

\begin{PC}

Let $T,S \in T(\textnormal{M}_n^{\otimes})$ and suppose $S$ precedes $T$ in the ordering. Then $T \in O(D_{I_1}^n)$ and $S \in O(D_{I_2}^n)$ for some $I_1,I_2  \in \mathcal{P}(\mathcal{K})$ where $|I_1| \leq |I_2|$.\\

If there is a region $R$ in $T$ that contains a special mutable arc $\gamma$, such that $\gamma$ is not an arc in $S$, then $\mu_{\gamma}(T)$ precedes $T$ in the ordering and $S\cap T \subseteq \mu_{\gamma}(T)\cap T$.

So suppose that for \underline{every} region $R$ of $T$ \underline{all} special mutable arcs in that region are also arcs in $S$.  Then by Lemma \ref{fix even} $I_2 \subseteq I_1$. Since $|I_1| \leq |I_2|$ we must have $I_1 = I_2$.

If $O(D_{I}^n)$ was a shelling for $D_{I}^n$ then the proof would be finished. However, in general, it is not. To understand how we should proceed let us consider $D_{\{(1,\frac{n}{2}+1)\}}^n$. \\ 

By definition, $O(D_{\{(1,\frac{n}{2}+1)\}}^n) = \displaystyle\order_{i=1}^2 S(\mathcal{X}_i^n)$. Let $T$ be the first triangulation of $S(\mathcal{X}_2)$ and let $S \in S(\mathcal{X}_1)$. Corollary \ref{intersection} tells us that the only arc $T$ and $S$ share in common is the diagonal c-arc $(1,\frac{n}{2}+1)$. If $n=2$ then $O(D_{\{(1,2)\}}^2) = S,T$ is a shelling for $D_{\{(1,2)\}}^n$. However, if $n \geq 4$ then there are at least 4 arcs in $S$ and $T$. Hence, $\mu_{\gamma}(T) \notin S(\mathcal{X}_1^n)$ for any arc $\gamma$ in $T$, since $\mu_{\gamma}(T)$ and $S$ can share at most two arcs in common. \\

However, as $n \geq 4$ the first triangulation of $S(\mathcal{X}_2^n)$ contains (at least one) arc $\gamma$ that mutates to a diagonal c-arc. And so $\mu_{\gamma}(T)$ contains more diagonal c-arcs than $T$. Hence $\mu_{\gamma}(T)$ precedes $T$ in the overall ordering for $T(\textnormal{M}_n^{\otimes})$.

\end{PC}
\hfill \textit{End of proof of Claim \ref{even claim}.}

\end{claim}

\end{proof}

\subsection{Shellability of $T(\textnormal{M}_n^{\otimes})$ for odd $n$.}

In the even case diagonal arcs were a key ingredient in the shelling of $T(\textnormal{M}_n^{\otimes})$. We will see 'diagonal triangles' play the same role in the odd case. For the duration of this section we fix $n = 2k+1$.

\begin{defn}

A triangle in $\textnormal{M}_n$ comprising of two c-arcs $(i, i+k)$, $(i,i+k+1)$ and the boundary segment $(i+k,i+k+1)$ for some $i \in [1,n]$ is called a \textit{\textbf{diagonal triangle}} (d-triangle). Additionally, call $i$ the \textit{\textbf{special vertex}} of the d-triangle.

\end{defn}

\begin{defn}

Let $\mathcal{Y}^n$ be the partial triangulation of $\textnormal{M}_n$ containing the d-triangle $(k+1,2k+1,1)$. And let $T(\mathcal{Y}^n) \subseteq T(\textnormal{M}_n^{\otimes})$ consist of all c-triangulations of $\textnormal{M}_n$ containing the d-triangle $(k+1,2k+1,1)$, and no other d-triangles. See Figure \ref{fig:typey}.

\begin{figure}[H]
\centering
\includegraphics[width=40mm]{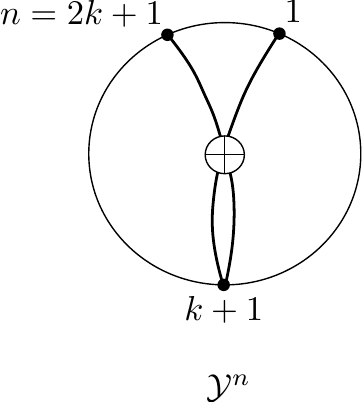}
\caption{}
\label{fig:typey}
\end{figure} 

\end{defn}

\begin{defn}

Let $T \in T(\mathcal{Y}^n)$ and $\gamma$ a c-arc in $T$. $\gamma = (i,j)$ for some $i \in [1, k+1]$ and $j \in [k+1,n]$. Define the \textit{\textbf{length}} of $\gamma$ as $l(\gamma) := j-i+1$, see Figure \ref{fig:oddlength}.

\begin{figure}[H]
\centering
\includegraphics[width=45mm]{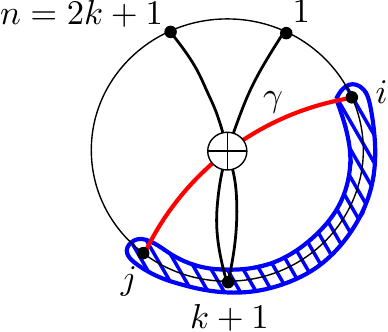}
\caption{'Number of marked points in shaded tube' $= l(\gamma)$.}
\label{fig:oddlength}
\end{figure}

\end{defn}

\begin{lem}
\label{max odd}
The max length of any c-arc in $T \in  \mathcal{Y}^n$ is $k+1$.

\begin{proof}

Given $T \in  T(\mathcal{Y}^n)$ we will prove by induction on $i \in [1,k+1]$ that there is no c-arc in $T$, with endpoint $k+i$, of length greater than $k+1$. For $i=1$ this trivially holds. Now assume the statement is true for $i$. Then there is a c-arc $\gamma = (x,k+i)$ in $T$ where $x \in [i,k+1]$. But the c-arc of maximum length, with endpoint $k+i+1$, that is compatible with $\gamma$ is $\beta = (x,k+i+1)$. If $x \in [i+1,k+1]$ then indeed $l(\beta) \leq k+1$. If $x=i$ then we have a d-triangle $(i,k+i,k+i+1)$ with special vertex $i$ - which is forbidden. So indeed $l(\beta) \leq k+1$.
\end{proof}

\end{lem}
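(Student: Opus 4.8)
The plan is to prove, by induction on $i\in[1,k+1]$, the sharper statement that $T$ contains no c-arc of length greater than $k+1$ with an endpoint at the marked point $k+i$. Since $n=2k+1$, every c-arc $(a,b)$ of $T$ has its right endpoint $b$ in $[k+1,n]=\{k+1,\dots,2k+1\}$, so letting $i$ range over $[1,k+1]$ exhausts all c-arcs of $T$ and gives the lemma. The base case $i=1$ is immediate from the length formula: a c-arc incident to $k+1$ is either $(k+1,j)$ with $j\in[k+1,n]$, of length $j-k\le k+1$, or $(a,k+1)$ with $a\in[1,k+1]$, of length $k+2-a\le k+1$.

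For the inductive step assume $i\le k$ (so $k+i+1\le n$ and no wrap-around occurs) and let $\gamma=(x,k+i)$ be a c-arc of maximal length in $T$ incident to $k+i$; the inductive hypothesis forces $l(\gamma)=(k+i)-x+1\le k+1$, i.e.\ $x\ge i$. Consider the triangle of $T$ having the boundary segment $[k+i,k+i+1]$ as a side. Since $T$ is a c-triangulation its other two sides are c-arcs, the one incident to $k+i$ being no longer than $\gamma$, and the way c-arcs fan out about a vertex then forces every c-arc of $T$ incident to $k+i+1$ to have left endpoint $\ge x$; equivalently, the longest c-arc incident to $k+i+1$ compatible with $\gamma$ is $\beta=(x,k+i+1)$. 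If $x\ge i+1$ then $l(\beta)=(k+i+1)-x+1\le k+1$, so every c-arc of $T$ incident to $k+i+1$ has length $\le k+1$ and the step is done.

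The remaining case $x=i$ — where the bound only gives $l(\beta)=k+2$ — is the crux. Here one shows $\beta=(i,k+i+1)\notin T$: were it in $T$, the c-arcs $\gamma=(i,k+i)$ and $\beta=(i,k+i+1)$ together with the boundary segment $[k+i,k+i+1]$ would bound a triangle of $T$, namely a d-triangle with special vertex $i$; but $i\le k$ means $i\ne k+1$, so this d-triangle differs from the prescribed d-triangle $(k+1,2k+1,1)$, contradicting $T\in T(\mathcal{Y}^n)$. Hence every c-arc of $T$ incident to $k+i+1$ has left endpoint $\ge i+1$ and thus length $\le(k+i+1)-(i+1)+1=k+1$, which closes the induction. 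I expect the main work to lie in pinning down the fanning step (so that compatibility with $\gamma$ really does bound the left endpoints of c-arcs at $k+i+1$) and in the modular index bookkeeping; the d-triangle exclusion itself is short once that is in place.
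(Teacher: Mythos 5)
Your proof is correct and follows essentially the same route as the paper's: the same induction on $i\in[1,k+1]$ over the right endpoint $k+i$, the same reduction to the longest c-arc $\gamma=(x,k+i)$ with $x\ge i$ forced by the inductive hypothesis, the same split into the cases $x\ge i+1$ and $x=i$, and the same exclusion of the d-triangle $(i,k+i,k+i+1)$ in the latter case. The only differences are cosmetic — you spell out the $i=1$ base case and you phrase the compatibility bound via the triangle on the boundary segment $[k+i,k+i+1]$, whereas the paper simply states directly that the longest c-arc at $k+i+1$ compatible with $\gamma$ is $(x,k+i+1)$; both ultimately rest on the same geometric fact about how c-arcs at adjacent vertices nest around the crosscap.
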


\begin{lem}

$T(\mathcal{Y}^n) \equiv T(\mathcal{X}_1^{n+1})$. As such, $T(\mathcal{X}_1^{n+1})$ induces an upper shelling of $T(\mathcal{Y}^n)$. Denote this upper shelling by $S(\mathcal{Y}^n)$.

\begin{proof}

Add a marked point to the d-triangle $(k+1,2k+1,1)$ in $\mathcal{Y}^n$ and relabel the marked points. Adding the c-arc $(1,k+2)$ we get $\mathcal{X}_1^{n+1}$. Lemma \ref{max odd} tells us the maximum length of an arc in $T \in T(\mathcal{Y}^n)$ is $k+1$. And since the length of a max arc in $T(\mathcal{X}_1^{n+1})$ is also $k+1$ then $T(\mathcal{Y}^n) \equiv T(\mathcal{X}_1^{n+1})$. See Figure \ref{fig:oddshelling}.

\end{proof}

\begin{figure}[H]
\centering
\includegraphics[width=130mm]{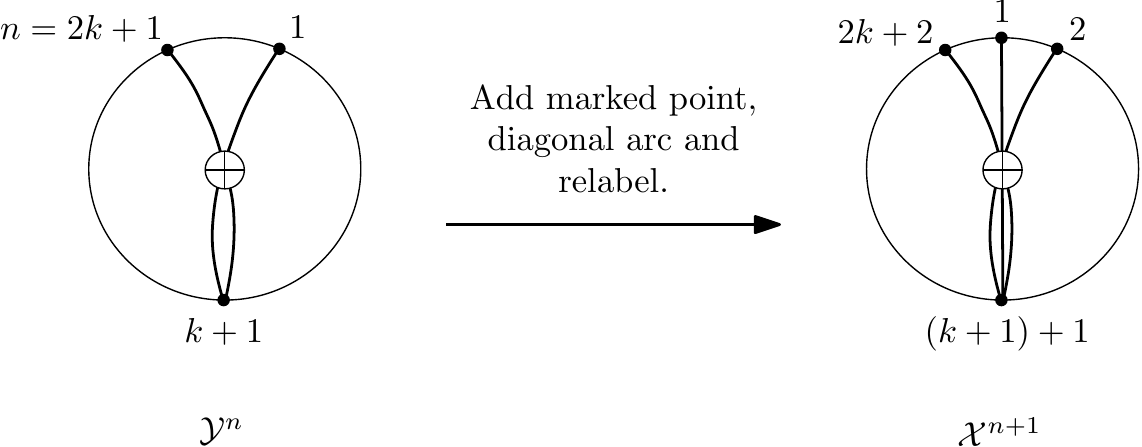}
\caption{}
\label{fig:oddshelling}
\end{figure} 

\end{lem}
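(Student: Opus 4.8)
The plan is to realise $T(\mathcal{Y}^n)$ as a relabelled, coned copy of $T(\mathcal{X}_1^{n+1})$ by means of an explicit local surgery $\Phi$, and then to observe that $\Phi$ respects the length function, so that an upper shelling on one side transfers to one on the other. Given $T \in T(\mathcal{Y}^n)$, insert a new marked point in the interior of the boundary segment $(2k+1,1)$ of the d-triangle $(k+1,2k+1,1)$, relabel so that this new point becomes vertex $1$ and each old vertex $j$ becomes $j+1$ (in particular the old special vertex $k+1$ becomes $k+2$), and adjoin the c-arc $(1,k+2)$. Together with the images of the two c-arcs of the old d-triangle, which are $(2,k+2)$ and $(2k+2,k+2)$, this cuts the former d-triangle into the two triangles $\{1,2,k+2\}$ and $\{1,2k+2,k+2\}$, so $\Phi(T)$ is a c-triangulation of $\textnormal{M}_{n+1}$ containing exactly the three c-arcs of the partial triangulation $\mathcal{X}_1^{n+1}$. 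The reverse move $\Psi$ --- delete $(1,k+2)$ and erase the added marked point --- is an inverse, so $\Phi$ is a bijection onto its image once that image is identified.

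To identify the image I would track lengths. A direct index computation shows that $\Phi$ sends any c-arc $(i,j)$ of a triangulation in $T(\mathcal{Y}^n)$ to the c-arc $(i+1,j+1)$, with $l(i+1,j+1)=(j+1)-(i+1)+1=j-i+1=l(i,j)$, so $\Phi$ preserves lengths; the adjoined arc $(1,k+2)$ has length $\frac{n+1}{2}+1$ and is the only one of the three c-arcs of $\mathcal{X}_1^{n+1}$ that is a diagonal. By Lemma \ref{max odd} every c-arc of $T$ has length at most $k+1$, hence by length-preservation no diagonal c-arc of $\textnormal{M}_{n+1}$ other than $(1,k+2)$ --- each of which has length $k+2$ --- can occur in $\Phi(T)$; so $\Phi(T)\in D_{\{(1,\frac{n+1}{2}+1)\}}^{n+1}$, and since it contains $(2,k+2)$ Lemma \ref{type} places it in $T(\mathcal{X}_1^{n+1})$. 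Conversely, for $T'\in T(\mathcal{X}_1^{n+1})$ the length bound $l(\gamma)\le k+1$ of Lemma \ref{type} shows that $\Psi(T')$ contains no d-triangle besides $(k+1,2k+1,1)$, so $\Psi(T')\in T(\mathcal{Y}^n)$. Thus $\Phi$ is a bijection $T(\mathcal{Y}^n)\to T(\mathcal{X}_1^{n+1})$. Since the surgery is confined to the d-triangle, a set of quasi-arcs of a triangulation of $\textnormal{M}_n$ is pairwise compatible if and only if its $\Phi$-image is, so $\Phi$ carries the simplicial complex induced by $T(\mathcal{Y}^n)$ to the one induced by $T(\mathcal{X}_1^{n+1})$ after deleting the cone vertices --- the two d-triangle c-arcs on one side, the three c-arcs of $\mathcal{X}_1^{n+1}$ on the other. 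This is exactly $T(\mathcal{Y}^n)\equiv T(\mathcal{X}_1^{n+1})$.

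For the shelling statement, Lemma \ref{upper shelling} supplies an upper shelling $S(\mathcal{X}_1^{n+1})$ of $T(\mathcal{X}_1^{n+1})$; pulling it back along $\Phi$ gives an ordering of $T(\mathcal{Y}^n)$ which is a shelling, because equivalent collections of triangulations are simultaneously shellable. As $\Phi$ preserves both the length of each c-arc and the relation ``the mutation of this arc stays in the family'', it preserves upper-mutability, so the pulled-back order is again upper; this is the promised $S(\mathcal{Y}^n)$. The only real friction I anticipate is the index bookkeeping of the relabelling and verifying, via Lemmas \ref{max odd} and \ref{type}, that ``$T$ has no d-triangle other than $(k+1,2k+1,1)$'' corresponds precisely to ``$T$ has no diagonal other than $(1,k+2)$'', the condition cutting out $D_{\{(1,\frac{n+1}{2}+1)\}}^{n+1}$; granting those, the transfer of the simplicial complex and of the shelling is a formal consequence of the cone results of Section \ref{shellability}.
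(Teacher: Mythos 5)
Your proof is correct and follows the same route as the paper: add a marked point in the boundary segment of the d-triangle, relabel, adjoin the c-arc $(1,k+2)$ to recover the partial triangulation $\mathcal{X}_1^{n+1}$, and use the length bound of Lemma \ref{max odd} (together with the max-arc length $\frac{n+1}{2}=k+1$ in $T(\mathcal{X}_1^{n+1})$) to identify the two families. You have simply made explicit the bijection $\Phi$, its length-preservation, and the transfer of upper-mutability, all of which the paper leaves implicit.
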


\begin{lem}
\label{odd region}
For any $T \in  T(\mathcal{Y}^n)$ there are an odd number of d-triangles in $T$. Moreover, the collection of triangulations of any region cut out inbetween d-triangles, such that no other d-triangles occur, is equivalent to $T(\mathcal{Y}^m)$ for some $m < n$.

\begin{proof}
We will show that if there are two d-triangles there must in fact be a third. Additionally we'll show the collection of (legitimate) triangulations in any region cut out inbetween the three d-triangles is equivalent to $T(\mathcal{Y}^m)$ for some $m < n$. And applying induction on this we will have proved the lemma. \\

Suppose there are at least two d-triangles in a c-triangulation $T$. Without loss of generality we may assume the two d-triangles $(k+1,2k+1,1)$ and $(i,i+k,i+k+1)$ are in $T$, for some $i \in [1,k]$. See Figure \ref{fig:2triangles}. 

\begin{figure}[H]
\centering
\includegraphics[width=50mm]{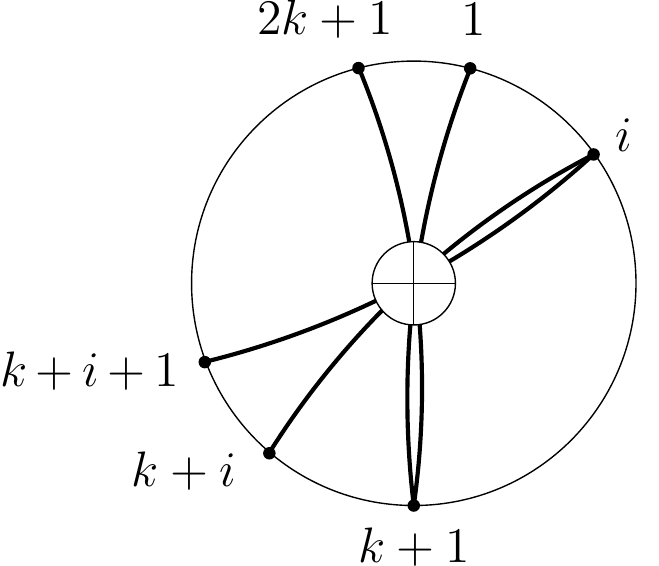}
\caption{}
\label{fig:2triangles}
\end{figure} 

We will show there is a third d-triangle with special vertex $z \in [i+k+1,2k+1]$. Note that if $(i+1,i+k+1) \in T$ then the d-triangle $(i+k+1,i,i+1) \in T$. Similarly, if $(k,2k+1) \in T$ then the d-triangle $(2k+1,k+1,k) \in T$.

So suppose $(i+1,i+k+1),(k,2k+1) \notin T$. This then implies $(i+1,x) \in T$ for some $x \in [i+k+2,2k]$, and $(k,y) \in T$ for some $y \in [i+k+2,2k]$. In turn, by induction, there is a d-triangle with special vertex $z \in [x,y]$. See Figure \ref{fig:exist3}.

\begin{figure}[H]
\centering
\includegraphics[width=60mm]{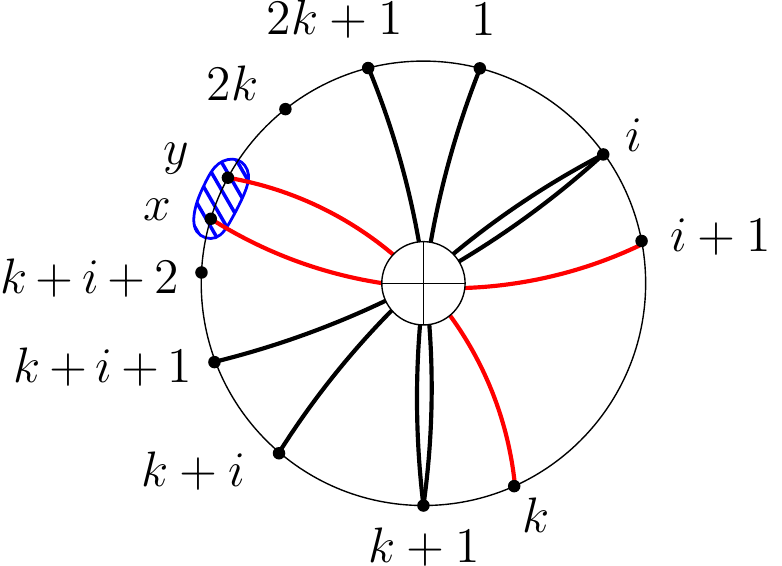}
\caption{By induction there is a d-triangle with its special vertex in the shaded region.}
\label{fig:exist3}
\end{figure}

What remains to prove is that each region cut out by these three d-triangles is equivalent to $T(\mathcal{Y}^m)$ for some $m < n$.

Consider the d-triangles $(k+1,2k+1,1)$ and $(i,i+k,i+k+1)$ with special vertices $k+1$ and $i$, respectively. Let $R$ be the region bounded by the c-arcs $(1,k+1)$, $(i,i+k)$ and the boundary segments $[1,i]$,$[k+1,k+i]$. Collapsing the boundary segment $[i,k+1]$ to a point and collapsing $[k+i,1]$ to a boundary segment preserves the notion of length in $R$. After collapsing we see that triangulating $R$ (so that no d-triangles occur) is equivalent to triangulating $\mathcal{Y}^{2i-1}$. See Figure \ref{fig:collapsedtriangle}.

\begin{figure}[H]
\centering
\includegraphics[width=135mm]{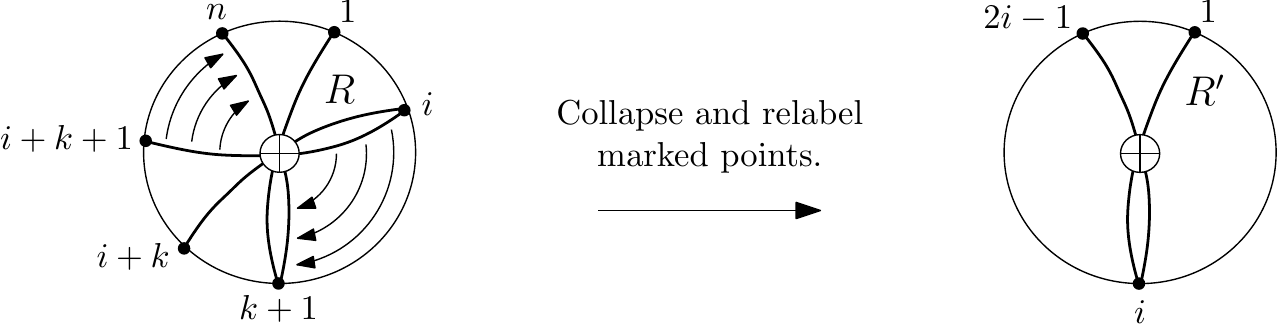}
\caption{}
\label{fig:collapsedtriangle}
\end{figure} 

Similarly the collection of triangulations of either of the other two regions cut out by the three d-triangles is equivalent to $T(\mathcal{Y}^m)$ for some $m < n$. This completes the proof.
\end{proof}

\end{lem}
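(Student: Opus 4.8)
The plan is to prove the two assertions in tandem by induction on $n=2k+1$, exactly along the lines sketched after the lemma statement: (i) a c-triangulation $T \in T(\mathcal{Y}^n)$ with at least two d-triangles must contain a third, and (ii) each region cut out between d-triangles (with no d-triangle in its interior) is equivalent to $T(\mathcal{Y}^m)$ for some odd $m < n$. Granting (i) and (ii), a trivial induction gives the odd parity: removing the interior of the three d-triangles splits $\textnormal{M}_n$ into three regions, each of which by (ii) and the inductive hypothesis carries an odd number of d-triangles, so $T$ has $3 + (\text{odd}) + (\text{odd}) + (\text{odd})$ d-triangles in total, which is odd; the base case is $n=1,3$ where the unique c-triangulation has exactly one d-triangle. (One should also handle the degenerate case where a region between two d-triangles is empty, i.e. the two d-triangles share an edge — this is the ``$(i+1,i+k+1)\in T$'' or ``$(k,2k+1)\in T$'' branch, which directly produces the third d-triangle without invoking induction.)

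First I would establish (i). Fix the d-triangle $(k+1,2k+1,1)$ guaranteed by $T \in T(\mathcal{Y}^n)$, and suppose a second d-triangle $(i,i+k,i+k+1)$ lies in $T$ with $i\in[1,k]$ (possibly after reflecting the picture so the second special vertex is in $[1,k]$ rather than $[k+2,2k+1]$). Consider the c-arcs emanating from $i+1$ and from $k$ on the ``far side''. If $(i+1,i+k+1)\in T$ then $(i+k+1,i,i+1)$ is a d-triangle with special vertex $i+k+1 \in [i+k+1,2k+1]$ and we are done; symmetrically if $(k,2k+1)\in T$. Otherwise the c-arc of maximum length at $i+1$ is $(i+1,x)$ with $x\in[i+k+2,2k]$ and the c-arc of maximum length at $k$ is $(k,y)$ with $y\in[i+k+2,2k]$; the subsurface these bound, together with the boundary arc region between $x$ and $y$, is again (after collapsing) a copy of $\mathcal{Y}^{m'}$ with $m' < n$ containing $1$ d-triangle by the inductive hypothesis for (i)–(ii) applied to smaller $m$ — so there is a d-triangle with special vertex $z\in[x,y]\subseteq[i+k+1,2k+1]$. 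This is the content of Figure \ref{fig:exist3}.

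For (ii), I would carry out the collapsing argument indicated in Figure \ref{fig:collapsedtriangle}: given two ``adjacent'' d-triangles with special vertices $k+1$ and $i$, the region $R$ is bounded by the two c-arcs $(1,k+1)$, $(i,i+k)$ and two boundary segments $[1,i]$, $[k+1,k+i]$; collapsing $[i,k+1]$ to a point and $[k+i,1]$ to a single boundary segment is a homeomorphism of $(R, \text{marked points})$ that preserves the length function defined on $T(\mathcal{Y}^\bullet)$, and carries $R$ to $\mathcal{Y}^{2i-1}$, with d-triangles of $T$ inside $R$ corresponding precisely to d-triangles of the collapsed surface. The key point to verify here is that the collapse does not identify distinct c-arcs and that the notion of ``d-triangle'' is intrinsic to the length function (a triangle with vertices $j, j+m', j+m'+1$ where $m' = \lfloor(2i-1)/2\rfloor$), so the equivalence of triangulation sets is genuine; this is routine once the picture is set up correctly. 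The analogous statements for the other two regions follow by the same collapse.

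The main obstacle, I expect, is bookkeeping rather than a conceptual difficulty: one must be careful that the induction in (i) is legitimate, i.e. that the smaller region invoked really is a $\mathcal{Y}^{m'}$ with $m' < n$ to which the inductive hypothesis applies, and that all the boundary cases (a region degenerating to an edge or to a single triangle, or a special vertex landing at an endpoint of the relevant interval) are either excluded by the ``no other d-triangles'' hypothesis or handled directly. Once the collapsing homeomorphisms are pinned down and seen to respect $l(\cdot)$ and the d-triangle structure, both claims and hence the lemma follow.
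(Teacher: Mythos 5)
Your proposal follows the paper's proof in all essential respects: the reduction to showing that two d-triangles force a third, the case split on whether $(i+1,i+k+1)$ or $(k,2k+1)$ lie in $T$, the appeal to a smaller region when they do not, and the collapsing argument identifying a region between consecutive d-triangles with $\mathcal{Y}^{2i-1}$, all match the paper. The paper itself only gestures at the final parity count (``applying induction on this we will have proved the lemma''), so your attempt to spell it out is a reasonable addition.

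However, the parity bookkeeping as you have written it is wrong. You claim each of the three regions ``carries an odd number of d-triangles by the inductive hypothesis,'' giving $3 + \text{odd} + \text{odd} + \text{odd}$, which you assert is odd; in fact $3 + 3\cdot(\text{odd})$ is \emph{even}. The fix is to track what the collapse does to the d-triangle count: the collapse of a region $R$ between two bounding d-triangles produces a surface $\textnormal{M}_{m}$ ($m<n$) together with the canonical d-triangle of $\mathcal{Y}^{m}$, and that canonical d-triangle is created by the collapse rather than coming from a d-triangle interior to $R$. So if the inductive hypothesis says the corresponding c-triangulation of $\textnormal{M}_{m}$ has an odd number of d-triangles, then after subtracting the one canonical d-triangle the number of d-triangles \emph{strictly inside} $R$ is \emph{even}. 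The correct count is therefore $3 + \text{even} + \text{even} + \text{even} = \text{odd}$. You should also be explicit that the three d-triangles found in step (i) need not be consecutive, so the three regions may themselves contain d-triangles, and the recursion is applied to each region via the $\mathcal{Y}^{m}$ identification; as written, you invoke (ii) --- which concerns regions with \emph{no} interior d-triangles --- for regions that may well contain some, which is a mismatch that the bookkeeping above resolves.
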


\begin{defn}

Let $T \in T(\mathcal{Y}^n)$ and let $\gamma$ be a c-arc in $T$. Call $\gamma$ \textit{\textbf{special mutable}} if it is upper mutable or $\mu_{\gamma}(T)$ contains more d-triangles than $T$.

\end{defn}

\begin{lem}
\label{fix odd}
Let $T \in T(\mathcal{Y}^n)$ and let $P_T$ be the partial triangulation of $\textnormal{M}_n$ consisting of all special mutable arcs in $T$. Then for any triangulation of $P_T$ there is no d-triangle with special vertex $i$ $\forall i \in [1,\dots,n]\setminus k+1$.

\begin{proof}

We follow the same idea used in Lemma \ref{fix even}. Namely, we will prove the lemma by induction on the shelling order of $S(\mathcal{Y}_n)$.

Let $T_1$ be the first triangulation in the shelling. Note $\gamma_i = (i,k+i-1)$ is a special mutable c-arc in $T_1$ $\forall i \in [2,k+1]$. Moreover $\gamma_i$ is not compatible with the c-arc $(i-1, k+i)$. Hence there is no d-triangle with special vertex $i-1$ or $k+i$ $\forall i \in [2,k+1]$. This proves the base inductive case.\\

Let $T \in T(\mathcal{Y}^n)$. What remains to show is that for any lower mutable arc $\gamma \in T$, the d-triangles incompatible with $P_T$ are precisely the d-triangles incompatible with $P_{\mu_{\gamma}(T)}$. 

So let $\gamma$ be a lower mutable arc in $T$. Let $\beta_1, \beta_2$ be the c-arcs of the quadrilateral containing $\gamma$. See Figure \ref{fig:fixspecialodd}. 

\begin{figure}[H]
\centering
\includegraphics[width=35mm]{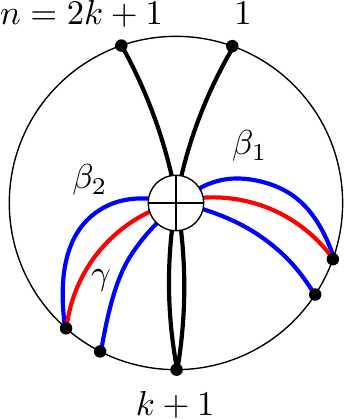}
\caption{}
\label{fig:fixspecialodd}
\end{figure}

Note that $\beta_1$ and $\beta_2$ could be upper mutable in $T$, but they will definitely not be upper mutable in $\mu_{\gamma}(T)$. Analogous to the proof of Lemma \ref{fix even}, to prove the lemma it suffices to show $\mu_{\gamma}$ is incompatible with all the d-triangles incompatible with either $\beta_1$ or $\beta_2$.

This follows from the fact that a c-arc $\alpha = (x,k+y)$ of length less than $k$ is incompatible with d-triangles with special vertex $z \in [y,x-1]\cup[k+y+1,k+x]$. See Figure \ref{fig:oddstop}.

\begin{figure}[H]
\centering
\includegraphics[width=55mm]{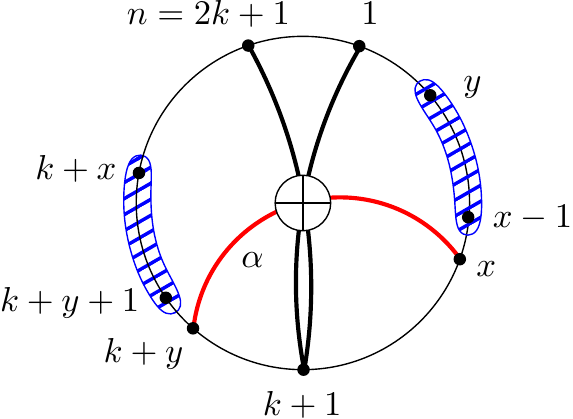}
\caption{$\alpha$ is incompatible with d-triangles whose special vertex lies in one of the shaded regions.}
\label{fig:oddstop}
\end{figure}

\end{proof}

\end{lem}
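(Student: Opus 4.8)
The plan is to run the same induction as in the proof of Lemma \ref{fix even}, transported through the equivalence $T(\mathcal{Y}^n)\equiv T(\mathcal{X}_1^{n+1})$. Under this equivalence $S(\mathcal{Y}^n)$ is an upper shelling, and (by the analogue of Lemma \ref{counted}) every $T\in T(\mathcal{Y}^n)$ is joined to the first triangulation $T_1$ of $S(\mathcal{Y}^n)$ by a chain of lower mutations, each step staying within $T(\mathcal{Y}^n)$. For a triangulation $T$ write $O(T)\subseteq[1,n]\setminus\{k+1\}$ for the set of special vertices $v$ whose d-triangle is incompatible with some arc of $P_T$. It suffices to prove (i) $O(T_1)=[1,n]\setminus\{k+1\}$, and (ii) if $\gamma\in T$ is lower mutable then $O(\mu_\gamma(T))\supseteq O(T)$; combining these along the chain of lower mutations yields $O(T)=[1,n]\setminus\{k+1\}$ for all $T$, which is the claim.

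For (i), $T_1$ is the image of $T_{max}$, whose special mutable c-arcs are the second-layer arcs $\gamma_i=(i,k+i-1)$ for $i\in[2,k+1]$. Each $\gamma_i$ is incompatible with the c-arc $(i-1,k+i)$, which is an edge of both the d-triangle with special vertex $i-1$ and the d-triangle with special vertex $k+i$; so both are obstructed by $P_{T_1}$. As $i$ ranges over $[2,k+1]$ the numbers $i-1$ and $k+i$ sweep out $[1,k]\cup[k+2,n]=[1,n]\setminus\{k+1\}$, giving (i).

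For (ii), a lower mutation replaces the diagonal $\gamma$ of a quadrilateral by its shorter diagonal $\gamma'$; let $\beta_1,\beta_2$ be the two c-arc sides of that quadrilateral. Only $\gamma,\gamma',\beta_1,\beta_2$ change status, and since the mutation stays in $T(\mathcal{Y}^n)$ the arc $\gamma$ is neither upper mutable nor d-triangle-creating, hence $\gamma\notin P_T$; meanwhile $\gamma'$ is upper mutable in $\mu_\gamma(T)$, so $\gamma'\in P_{\mu_\gamma(T)}$, while $\beta_1,\beta_2$ lose upper-mutability across the flip. So (ii) reduces to: every d-triangle incompatible with $\beta_1$ or $\beta_2$ is incompatible with $\gamma'$. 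I would prove this from a book-keeping fact established in advance --- that a c-arc $\alpha=(x,k+y)$ of length less than $k$ is incompatible with exactly the d-triangles whose special vertex lies in $[y,x-1]\cup[k+y+1,k+x]$ --- and then, since $\beta_1,\beta_2,\gamma'$ all bound the same quadrilateral, the containment of the corresponding intervals is a short computation.

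The step I expect to be the main obstacle is (ii): correctly tracking, through a single flip, which d-triangle is killed by each of the four quadrilateral arcs, and verifying that nothing is lost precisely when $\beta_1,\beta_2$ cease to be upper mutable --- including the boundary cases where $\gamma$, $\gamma'$ or a $\beta_i$ reaches the maximal length $k+1$ or abuts the fixed d-triangle $(k+1,2k+1,1)$. The interval description is exactly the device that reduces this to a finite, routine case check.
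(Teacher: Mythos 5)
Your proposal is essentially the paper's own argument, reorganized with a bit more explicit bookkeeping. Both proofs use the same structure: induct along the chain of lower mutations from the first triangulation $T_1$ (the image of $T_{\max}$ under the equivalence $T(\mathcal{Y}^n)\equiv T(\mathcal{X}_1^{n+1})$, via the analogue of Lemma \ref{counted}); verify the base case by noting that the arcs $\gamma_i=(i,k+i-1)$, $i\in[2,k+1]$, are special mutable in $T_1$ and that incompatibility with $(i-1,k+i)$ blocks the d-triangles with special vertices $i-1$ and $k+i$, sweeping out $[1,n]\setminus\{k+1\}$; and reduce the inductive step to showing that the flipped arc $\gamma'$ blocks every d-triangle blocked by $\beta_1$ or $\beta_2$, using the same interval description of which d-triangles a given c-arc $(x,k+y)$ of sub-maximal length obstructs. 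Your one improvement in precision --- observing that only the one-sided containment $O(\mu_\gamma(T))\supseteq O(T)$ is needed, and that it is automatically an equality because $O(T_1)$ is already the full set --- cleans up the paper's "precisely the same" phrasing without changing the substance, and your flagging of the boundary cases (arcs of length $k+1$, arcs abutting the fixed d-triangle $(k+1,2k+1,1)$) matches where the real work in the interval computation sits.
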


An analogous argument to Lemma \ref{at least one diagonal} proves the following lemma.

\begin{lem}

In each c-triangulation $T$ of $\textnormal{M}_n$ there is at least one d-triangle.

\end{lem}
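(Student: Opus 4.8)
The plan is to mimic the proof of Lemma~\ref{at least one diagonal} almost verbatim, replacing ``diagonal arc'' with ``d-triangle'' throughout and using the length function for $T(\mathcal{Y}^n)$ together with Lemma~\ref{max odd}. Recall $n = 2k+1$. Suppose for a contradiction that a c-triangulation $T$ of $\textnormal{M}_n$ contains no d-triangle; I want to derive the existence of a c-arc of length strictly greater than $k+1$, contradicting the bound proved (implicitly, since a c-triangulation of $\textnormal{M}_n$ always contains \emph{some} short c-arc to anchor the induction) in Lemma~\ref{max odd}'s style of reasoning.

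First I would fix a normalization: without loss of generality the c-arc connected to the vertex $1$ of maximal length is $\gamma_1 = (1, j_1)$ with $j_1 \in [k+1, 2k+1]$ small, i.e.\ $l(\gamma_1) \leq k+1$ (if the maximal c-arc at $1$ already had length $\geq k+2$ we would be done, or we flip the picture so that the ``short side'' is at vertex $1$; note length exactly $k+1$ at \emph{both} a vertex and its antipode would force a d-triangle, which is excluded). Then I would run an induction on $i$ showing that the c-arc $\gamma_i = (i, x_i)$ of maximal length at vertex $i$ has $x_i$ advancing by at most one each step: by maximality of $\gamma_{i-1}$, the arc $(i, x_{i-1})$ lies in $T$, so $x_i \geq x_{i-1}$; and if $x_i$ jumped past $x_{i-1}+1$ we would, by $T$ being a c-triangulation (all arcs are c-arcs, and c-arcs at a fixed endpoint are linearly ordered by length), pick up the c-arc $(i, i+k)$ together with $(i, i+k+1)$ and the boundary segment between them — that is a d-triangle with special vertex $i$, contradiction. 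Iterating this from $i = 1$ up to $i = j_1 - k$ (the endpoint where the arc would close up antipodally), the slow advance of $x_i$ forces $(j_1-k,\, j_1) = (j_1-k,\, (j_1-k)+k)$ to be forced into $T$, and then its compatible partner $(j_1-k, j_1+1)$ is also forced, producing the d-triangle with special vertex $j_1 - k$ — the desired contradiction.

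The main obstacle, and the only place the argument needs care, is the base/normalization step: in Lemma~\ref{at least one diagonal} one simply ``flips the picture'' to assume the maximal c-arc at $1$ is short, but here I must make sure the flip is compatible with the diagonal-triangle notion (special vertex) and that ``short at vertex $1$'' can genuinely be arranged — equivalently, that it is impossible for \emph{every} vertex $i$ to have its maximal c-arc of length $\geq k+1$ simultaneously without already forcing a d-triangle. This is where the parity $n = 2k+1$ is used: a full set of length-$(k+1)$ c-arcs $(i, i+k)$ would be a c-triangulation consisting of ``near-diagonals'', and consecutive such arcs $(i, i+k), (i+1, i+k+1)$ are incompatible, so they cannot all coexist; hence some vertex must carry only short c-arcs, which is the normalization we need. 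Once that is pinned down, the inductive ``$x_i$ advances by at most one'' computation is routine and parallels Lemma~\ref{max odd} exactly, so I would state it briefly rather than belabor it.
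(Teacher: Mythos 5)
Your high-level strategy — argue by contradiction, normalize at vertex $1$, track the endpoint of the maximal c-arc at each successive vertex, and force the two arcs of a d-triangle — is exactly what the paper intends when it says the lemma follows by an argument ``analogous'' to Lemma~\ref{at least one diagonal}. But two of the steps you single out as ``routine'' do not go through as you have written them.

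First, the normalization. You claim that if the maximal c-arc at vertex $1$ has length $\geq k+2$ ``we would be done.'' That does not follow from anything in the paper: Lemma~\ref{max odd} bounds c-arc lengths only for triangulations that already lie in $T(\mathcal{Y}^n)$, i.e.\ that already contain the d-triangle $(k+1,2k+1,1)$; it says nothing about a general c-triangulation, and a long c-arc in $T$ does not produce a d-triangle. The fallback ``or we flip the picture'' does not rescue this either, because the reflection fixing vertex $1$ is a homeomorphism and preserves the length of the maximal c-arc at $1$, so it cannot turn a long arc into a short one. What the flip \emph{does} buy you, as in Lemma~\ref{at least one diagonal}, is a choice of side: it exchanges $(1,j)$ with $(1,\,n+2-j)$, so you may assume the far endpoint $j_1$ lies in a chosen half of $[1,n]$. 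That is the correct content of the WLOG; the ``length $\geq k+2$'' branch should simply be deleted. Second, the inductive step as you phrase it — ``if $x_i$ jumped past $x_{i-1}+1$ we would pick up $(i,i+k)$ and $(i,i+k+1)$'' — is not valid. A jump of size $\geq 2$ in the fan at vertex $i$ only forces a d-triangle at $i$ if the interval $[x_{i-1},x_i]$ actually straddles the pair $\{i+k,\,i+k+1\}$; that is not a consequence of the jump size alone. The inequality you actually need to carry through the induction is the explicit bound $x_i \le i+k$ (base case $x_1\le k+1$ from the normalization), using the fact that the fan at $i$ begins at $x_{i-1}$ by maximality of $\gamma_{i-1}$: if $x_i \ge i+k+1$ then, since $x_{i-1}\le (i-1)+k < i+k$, both $(i,i+k)$ and $(i,i+k+1)$ lie in the fan at $i$, giving the forbidden d-triangle. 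With those two repairs the argument does run parallel to Lemma~\ref{at least one diagonal}, but as written both the base case and the inductive step contain genuine gaps.
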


\begin{lem}
\label{odd}
$T(\textnormal{M}_n^{\otimes})$ is shellable for odd $n$.

\begin{proof}

Let $\mathcal{K}$ be the collection of d-triangles of $\textnormal{M}_n$ that can occur in a triangulation without containing any other d-triangles. Consider $I = \{ \Delta_1, \ldots, \Delta_k \} \subseteq \mathcal{K}$ and let $D_{I}^n$ consist of all triangulations of $T(\textnormal{M}_n^{\otimes})$ containing every d-triangle in $I$, and no d-triangles in $\mathcal{K}\setminus I$.

By Lemma \ref{odd region}, each region cut out inbetween the d-triangles in $I$ is shellable. Taking the product of these shellings over all regions gives us a shelling for $D_{I}^n$. Denote this shelling by $S(D_{I}^n)$. \\

\begin{claim}
\label{odd claim}

$\displaystyle\order_{k=\frac{n}{2}}^1 Block(k)$ is a shelling for $T(\textnormal{M}_n^{\otimes})$. \\ 

Where $Block(k) := \displaystyle\order_{I \in \mathcal{K}^{(k)}}S(D_{I}^n)$. \\

\begin{PC}

Let $T,S \in T(\textnormal{M}_n^{\otimes})$ and suppose $S$ precedes $T$ in the ordering. Then $T \in S(D_{I_1}^n)$ and $S \in S(D_{I_2}^n)$ for some $I_1,I_2  \in \mathcal{P}(\mathcal{K})$ where $|I_1| \leq |I_2|$.

If there is a region $R$ in $T$ that contains a special arc $\gamma$, such that $\gamma$ is not an arc in $S$, then $\mu_{\gamma}(T)$ precedes $T$ in the ordering and $S\cap T \subseteq \mu_{\gamma}(T)\cap T$ .

So suppose that for \underline{every} region $R$ of $T$ \underline{all} special arcs in that region are also arcs in $S$.  Then by Lemma \ref{fix odd} $I_2 \subseteq I_1$. Since $|I_1| \leq |I_2|$ we must have $I_1 = I_2$. And since $S(D_{I_1}^n)$ is a shelling for $D_{I}^n$ the claim is proved.

\end{PC}
\hfill \textit{End of proof of Claim \ref{odd claim}.}

\end{claim}

\end{proof}

\end{lem}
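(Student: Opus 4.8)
The plan is to closely mirror the block-decomposition argument that handled even $n$ in Lemma \ref{even}, with \emph{d-triangles} taking the role played there by diagonal arcs. Write $n=2k+1$. Let $\mathcal{K}$ be the set of d-triangles of $\textnormal{M}_n$ that can occur in a c-triangulation without forcing any further d-triangle, and for $I\subseteq\mathcal{K}$ let $D_I^n$ consist of the c-triangulations whose set of d-triangles (within $\mathcal{K}$) is exactly $I$. The d-triangles of $I$ cut $\textnormal{M}_n$ into several regions, and by Lemma \ref{odd region} the c-triangulations of any such region in which no additional d-triangle occurs form a complex equivalent to $T(\mathcal{Y}^m)$ for some $m<n$; since $T(\mathcal{Y}^m)\equiv T(\mathcal{X}_1^{m+1})$, this complex carries an upper shelling. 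Consequently $D_I^n$ is, up to coning, the join of these region complexes, hence shellable by Proposition \ref{join}; I fix once and for all the shelling $S(D_I^n)$ obtained as the product of the region-wise upper shellings, so that it respects upper mutations region by region.

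The first real step is then to order the blocks. By the analogue of Lemma \ref{at least one diagonal} every c-triangulation of $\textnormal{M}_n$ contains at least one d-triangle (and, by Lemma \ref{odd region}, an odd number), so $|I|$ ranges over a finite set of positive values; I take the global ordering $\order_{k} Block(k)$ with $k$ decreasing from its largest value to $1$, where $Block(k):=\order_{I\in\mathcal{K}^{(k)}}S(D_I^n)$. Thus c-triangulations carrying more d-triangles appear earlier, and inside a fixed $D_I^n$ the order is the chosen shelling $S(D_I^n)$.

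It then remains to verify the defining property of a shelling order (Definition \ref{shelling defn}). Call a c-arc $\gamma$ of $T$ \emph{special mutable} if it is upper mutable within its region or if $\mu_\gamma(T)$ has strictly more d-triangles than $T$. Take $S$ preceding $T$ in the ordering, say $T\in S(D_{I_1}^n)$ and $S\in S(D_{I_2}^n)$ with $|I_1|\le|I_2|$; we may assume $I_1\neq I_2$, since if $I_1=I_2$ the condition holds because $S(D_{I_1}^n)$ is itself a shelling. If some region of $T$ contains a special mutable arc $\gamma$ that is not an arc of $S$, then $\mu_\gamma(T)$ precedes $T$ — either it has more d-triangles and so lies in an earlier block, or it arises by an upper mutation inside a region and hence comes earlier within $S(D_{I_1}^n)$ — while $\gamma\notin S$ forces $S\cap T\subseteq\mu_\gamma(T)\cap T$, so the shelling condition holds for $(S,T)$. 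Otherwise every special mutable arc of every region of $T$ is already an arc of $S$; applying Lemma \ref{fix odd} region by region, the partial triangulation of all special mutable arcs of $T$ admits no d-triangle with special vertex outside those recorded by $I_1$, so $S$ can contain no d-triangle outside $I_1$, that is $I_2\subseteq I_1$. Together with $|I_1|\le|I_2|$ this gives $I_1=I_2$, contradicting our reduction; hence the first case always applies, completing the verification.

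The step I expect to be the main obstacle is precisely the implication \emph{all special mutable arcs of $T$ lie in $S$, therefore $I_2\subseteq I_1$}: this is where the structural content sits, namely Lemma \ref{fix odd}, and it must be invoked not just for the model region $T(\mathcal{Y}^m)$ but for every region of $T$, transported across the equivalences $T(\mathcal{Y}^m)\equiv T(\mathcal{X}_1^{m+1})$ and read along the upper shelling order of each region. A secondary point requiring care is the compatibility of the two nested orderings: one must check that creating a d-triangle strictly decreases the block index, while an upper mutation inside a region leaves the block index fixed but moves the triangulation earlier within $S(D_{I_1}^n)$, so that "$\mu_\gamma(T)$ precedes $T$" genuinely holds in the combined order in every instance of the first case.
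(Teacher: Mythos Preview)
Your proposal is correct and follows essentially the same approach as the paper: block the c-triangulations by their set $I$ of d-triangles, shell each $D_I^n$ as a product of upper-shelled regions via Lemma~\ref{odd region}, order the blocks by decreasing $|I|$, and verify the shelling condition by splitting on whether some special mutable arc of $T$ is missing from $S$ (using Lemma~\ref{fix odd} in the remaining case to force $I_2\subseteq I_1$). Your write-up is in fact slightly more explicit than the paper's about why $\mu_\gamma(T)$ precedes $T$ in the combined order and about the need to apply Lemma~\ref{fix odd} region by region, but the argument is the same.
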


Together Lemma \ref{even} and Lemma \ref{odd} prove $T(\textnormal{M}_n^{\otimes})$ is shellable for all $n \geq 1$.

Returning to our example of $\text{M}_3$, Figure \ref{fig:numberedM3} shows a shelling of $T(\text{M}^{\circ}_3)$ that we can obtain through our construction.

\begin{figure}[H]
\centering
\includegraphics[width=100mm]{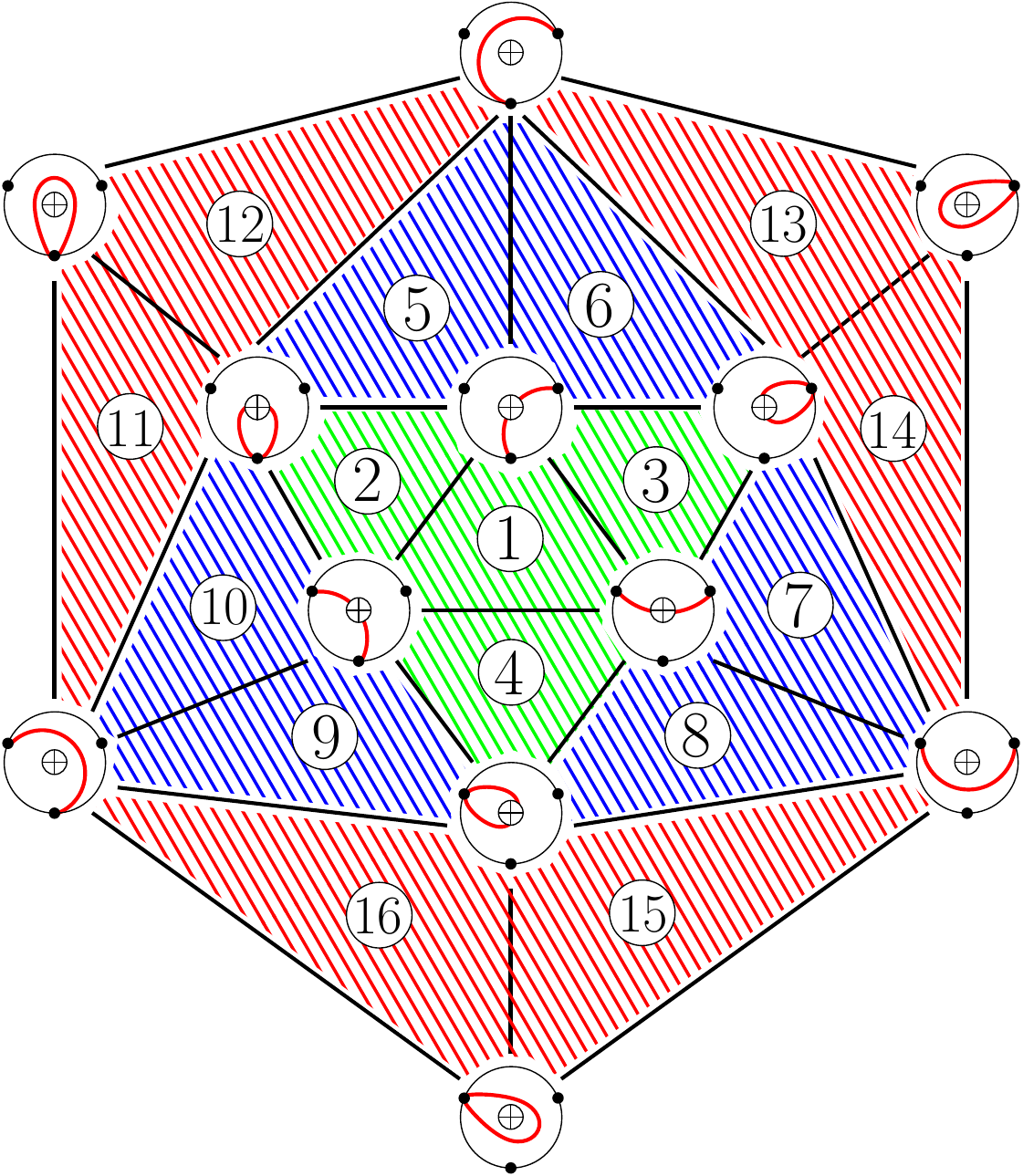}
\caption{Shelling of $T(\text{M}^{\circ}_3)$}
\label{fig:numberedM3}
\end{figure}

\subsection{Proof of Main Theorem.}

\begin{thm}[Main Theorem]
\label{Main}
$Arc(\textnormal{M}_n)$ is shellable for $n \geq 1$.

\begin{proof}

Let $\mathcal{C}$ consist of all quasi-triangulations of $\textnormal{M}_n$ containing the one-sided closed curve. Cutting along the one-sided curve in $\textnormal{M}_n$ we are left with the marked surface $C_{n,0}$. $Arc(C_{n,0})$ is shellable by Proposition \ref{shellingcn}. Since $\mathcal{C}$ is the cone over $Arc(C_{n,0})$, then by Proposition \ref{join} it is also shellable. Let $S(\mathcal{C})$ denote a shelling for $\mathcal{C}$. Lemma \ref{half} together with Lemma \ref{even} and Lemma \ref{odd} proves that $T(\text{M}^{\circ}_n)$ is shellable. Let $S(\textnormal{M}_n^{\circ})$ be a shelling of $T(\text{M}^{\circ}_n)$.

\begin{claim}
\label{main claim}

$S(\textnormal{M}_n):= S(\textnormal{M}_n^{\circ}), S(\mathcal{C})$ is a shelling for $T(\textnormal{M}_n)$

\begin{PC}

Suppose $S,T \in S(\textnormal{M}_n)$ and $S$ precedes $T$ in the ordering. Without loss of generality we may assume $S \in S(\textnormal{M}_n^{\circ})$ and $T \in S(\mathcal{C})$. Since $T$ contains the one-sided closed curve $\gamma$, and $\gamma \notin S$ then $S\cap T \subseteq \mu_{\gamma}(T)\cap T$. Moreover, $ \mu_{\gamma}(T) \in S(\textnormal{M}_n^{\circ})$ so precedes $T$ in the ordering.

\end{PC}
\hfill \textit{End of proof of Claim \ref{main claim}.}
\end{claim}

\end{proof}

\end{thm}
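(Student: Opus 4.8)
The plan is to split the maximal simplices of $Arc(\textnormal{M}_n)$---the quasi-triangulations of $\textnormal{M}_n$---into two blocks according to whether or not they contain the (essentially unique) one-sided closed curve $\gamma_0$, to shell each block separately using the machinery already assembled, and then to check that concatenating the two shellings, with the ``no one-sided curve'' block first, is again a shelling. By Proposition~\ref{Shelling} it suffices throughout to work with the induced orderings of triangulations and verify the condition of Definition~\ref{shelling defn}.

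First I would handle the block $\mathcal{C}$ of quasi-triangulations that contain $\gamma_0$. Cutting $\textnormal{M}_n$ along $\gamma_0$ yields the cylinder $C_{n,0}$, and every quasi-triangulation in $\mathcal{C}$ is obtained from a triangulation of $C_{n,0}$ by adjoining the single arc $\gamma_0$; hence $\mathcal{C}$ is precisely the cone over $Arc(C_{n,0})$. Since $Arc(C_{n,0})$ is shellable by Proposition~\ref{shellingcn}, Proposition~\ref{join} supplies a shelling $S(\mathcal{C})$ of $\mathcal{C}$. Next the block $T(\textnormal{M}_n^{\circ})$ of genuine triangulations: this is shellable by Lemma~\ref{half} combined with Lemma~\ref{even} and Lemma~\ref{odd}, which together cover all $n \geq 1$. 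Fix such a shelling $S(\textnormal{M}_n^{\circ})$.

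It then remains to show that $S(\textnormal{M}_n^{\circ})$ followed by $S(\mathcal{C})$ is a shelling of the whole complex. Within each block the shelling condition holds by construction, so the only new case is a triangulation $S$ from the first block preceding a triangulation $T$ from the second. In that situation $T$ contains $\gamma_0$ while $S$ does not, so $\gamma_0 \notin S$ forces $S \cap T \subseteq \mu_{\gamma_0}(T) \cap T$. Moreover, flipping a one-sided closed curve in any quasi-triangulation always returns an arc (this is exactly the observation preceding Proposition~\ref{flip connected}), so $\mu_{\gamma_0}(T)$ is a triangulation containing no one-sided curve, i.e. $\mu_{\gamma_0}(T) \in T(\textnormal{M}_n^{\circ})$, which lies entirely in the first block and hence precedes $T$. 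This completes the verification.

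I expect no real obstacle in this final step: all of the genuine difficulty has already been pushed into Proposition~\ref{shellingcn} and Lemmas~\ref{half}, \ref{even} and \ref{odd}, and what is left is only the bookkeeping that flipping $\gamma_0$ moves us out of $\mathcal{C}$ and into $T(\textnormal{M}_n^{\circ})$. The one point to state carefully is why $\mathcal{C}$ really is a cone (that $\gamma_0$ is compatible with every arc of $C_{n,0}$ and appears in no triangulation of $\textnormal{M}_n$), after which Proposition~\ref{join} does the work.
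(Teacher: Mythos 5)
Your proposal is correct and follows exactly the paper's own argument: split the quasi-triangulations into $T(\textnormal{M}_n^{\circ})$ and the cone $\mathcal{C}$ over $Arc(C_{n,0})$, shell the first via Lemmas~\ref{half}, \ref{even}, \ref{odd} and the second via Proposition~\ref{shellingcn} and Proposition~\ref{join}, then concatenate with $T(\textnormal{M}_n^{\circ})$ first and verify the shelling condition by flipping the one-sided closed curve. The only thing you add beyond the paper is the (welcome) explicit remark that $\mathcal{C}$ really is a cone because the one-sided closed curve is compatible with every arc of $C_{n,0}$.
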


\begin{cor}
\label{sphere}

$Arc(\textnormal{M}_n)$ is a PL $(n-1)$-sphere for $n \geq 1$.

\begin{proof}

Follows immediately from Theorem \ref{Ziegler} and Theorem \ref{Main}.

\end{proof}

\end{cor}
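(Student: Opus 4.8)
The plan is to combine the three facts that have been assembled along the way: Theorem \ref{Main} provides shellability, the earlier pseudo-manifold proposition (together with the uniqueness of flips in Proposition \ref{flip}) provides the combinatorial-manifold structure, and Theorem \ref{Ziegler} of Danaraj and Klee converts these into the PL-sphere conclusion. So the proof is essentially a matter of checking that the hypotheses of Theorem \ref{Ziegler} are met and computing the dimension. I would proceed in three short steps.

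\textbf{Step 1: $Arc(\textnormal{M}_n)$ is a pseudo-manifold without boundary.} By the proposition preceding the finiteness theorem, $A^{\otimes}(\textnormal{M}_n) = Arc(\textnormal{M}_n)$ is a pseudo-manifold: every maximal simplex (quasi-triangulation) has the same cardinality by Corollary \ref{rank}, and every codimension-one face is contained in \emph{exactly} two maximal simplices, this last point being precisely Proposition \ref{flip} (existence and uniqueness of the flip of a quasi-arc in a quasi-triangulation). Since each codimension-one face lies in two maximal simplices rather than one, the pseudo-manifold has no boundary.

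\textbf{Step 2: the dimension is $n-1$.} By Corollary \ref{rank} the number of quasi-arcs in a (quasi-)triangulation of $\textnormal{M}_n$ is an invariant $d_n$, so $Arc(\textnormal{M}_n)$ is pure of dimension $d_n-1$. To compute $d_n$, cut $\textnormal{M}_n$ along a one-sided closed curve $\gamma$; this yields the cylinder $C_{n,0}$, whose triangulations contain $n-1$ arcs (the dimension of $Arc(C_{n,0})$ is $n-2$, as recorded in Section \ref{shellability}). A quasi-triangulation of $\textnormal{M}_n$ containing $\gamma$ therefore consists of $\gamma$ together with a triangulation of $C_{n,0}$, giving $d_n = 1 + (n-1) = n$, whence $\dim Arc(\textnormal{M}_n) = n-1$. (Alternatively one reads $d_n = n$ off the polytopal base cases $n \in \{1,2,3\}$ and invariance, or via an Euler-characteristic count.)

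\textbf{Step 3: apply Theorem \ref{Ziegler}.} By Theorem \ref{Main}, $Arc(\textnormal{M}_n)$ is shellable; by Steps 1 and 2 it is a pseudo-manifold without boundary of dimension $n-1$. Theorem \ref{Ziegler} then gives that it is a PL $(n-1)$-sphere, which is the assertion of the corollary. I do not anticipate any genuine obstacle: all the substantive work lies in Theorem \ref{Main}, and the only thing demanding a moment's attention is the bookkeeping in Steps 1 and 2 — confirming the "no boundary" hypothesis and that the dimension is exactly $n-1$ and not off by one.
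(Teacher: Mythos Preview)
Your proposal is correct and follows exactly the route the paper takes: invoke Theorem \ref{Main} for shellability, the pseudo-manifold proposition (via Proposition \ref{flip} and Corollary \ref{rank}) for the boundary-free pseudo-manifold hypothesis, and then apply Theorem \ref{Ziegler}. The paper compresses all of this into the single line ``Follows immediately from Theorem \ref{Ziegler} and Theorem \ref{Main}''; your Steps 1--2 simply make explicit the dimension count and the no-boundary check that the paper leaves to the reader.
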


\Addresses

\end{document}